\documentclass[leqno]{amsart}

\usepackage[english]{babel} 									
\usepackage[T1]{fontenc}							
\usepackage[utf8]{inputenx}
				
\usepackage{mathtools}								
\usepackage{empheq}									
\usepackage{enumitem}								

\usepackage{amssymb}								
\usepackage{mathrsfs}								
\usepackage{bm}                                     
				
\usepackage{verbatim}                               
\usepackage{iflang}									
\usepackage{xifthen}								

\usepackage[final]{pdfpages}						
\usepackage{todonotes}								
\usepackage{aliascnt}								
\usepackage{needspace}								
\usepackage{refcount}

\usepackage{subcaption}			
\usepackage{placeins} 			
\usepackage{grffile}			
\usepackage[all]{xy}			

\usepackage{transparent}
\usepackage{color}				

\usepackage{tikz}               
\usetikzlibrary{math,arrows.meta, cd}
\DeclareGraphicsExtensions{.pdf,.png,.jpg}

\usepackage{graphicx}			
 
\usepackage{diagbox} 

\usepackage[babel]{csquotes}							

\usepackage{nameref}									
\usepackage[colorlinks	=	true,						
          	 	linkcolor	=	red,
            	urlcolor	=	red,
            	citecolor	=	red]%
            	{hyperref}
\usepackage{bookmark}									
\usepackage{cleveref}									
\numberwithin{equation}{section}

\newtheorem{theorem}{Theorem}[section]

\newtheorem{corollary}[theorem]{Corollary}
\newtheorem{lemma}[theorem]{Lemma}
\newtheorem{proposition}[theorem]{Proposition}

\newtheorem*{theorem*}{Theorem}

{\bf}{\it}

\newtheorem{definition}[theorem]{Definition}

\theoremstyle{remark}
\newtheorem{remark}[theorem]{Remark}

\newtheorem*{remark*}{Remark}

\theoremstyle{remark}

\newcommand{\N}{\mathbb{N}}
\renewcommand{\S}{\mathbb{S}}

\newcommand{\R}{\mathbb{R}}

\DeclareMathOperator{\diam}{diam}

\DeclareMathOperator{\loc}{loc}

\DeclareMathOperator{\LIP}{LIP}

\allowdisplaybreaks

\newcommand{\norm}[1]{ \left\| #1 \right\| }	
\newcommand{\abs}[1]{ \left| #1 \right| }           
\newcommand{\apmd}[2][]{							
	\ifthenelse{\equal{#1}{}}%
					{ \operatorname{N}_{#2}	}%
					{ \operatorname{N}_{#1,#2} 	}}

\newcommand{\aint}[2][]{
	\ifthenelse{\equal{#1}{}}%
					{%
\mathchoice%
      {\mathop{\kern 0.2em\vrule width 0.6em height 0.69678ex depth -0.58065ex
              \kern -0.8em \intop}\nolimits_{\kern -0.45em#2}^{#1}}%
      {\mathop{\kern 0.1em\vrule width 0.5em height 0.69678ex depth -0.60387ex
              \kern -0.6em \intop}\nolimits_{#2}^{#1}}%
      {\mathop{\kern 0.1em\vrule width 0.5em height 0.69678ex depth -0.60387ex
              \kern -0.6em \intop}\nolimits_{#2}^{#1}}%
      {\mathop{\kern 0.1em\vrule width 0.5em height 0.69678ex depth -0.60387ex
              \kern -0.6em \intop}\nolimits_{#2}^{#1}}}%
					{%
\mathchoice%
      {\mathop{\kern 0.2em\vrule width 0.6em height 0.69678ex depth -0.58065ex                                              
              \kern -0.8em \intop}\nolimits_{\kern -0.45em#1}^{#2}}%
      {\mathop{\kern 0.1em\vrule width 0.5em height 0.69678ex depth -0.60387ex
              \kern -0.6em \intop}\nolimits_{#1}^{#2}}%
      {\mathop{\kern 0.1em\vrule width 0.5em height 0.69678ex depth -0.60387ex
              \kern -0.6em \intop}\nolimits_{#1}^{#2}}%
      {\mathop{\kern 0.1em\vrule width 0.5em height 0.69678ex depth -0.60387ex
              \kern -0.6em \intop}\nolimits_{#1}^{#2}}}}

\newcommand{\set}[1]{\{\, {#1} \,\}}

\newcommand{\Set}[1]{\left\{\, {#1} \,\right\}}

\newcommand{\vol}{\mathrm{vol}}

\usepackage{thmtools}
\usepackage{thm-restate}

\newcommand{\C}{\mathbb{C}}
\newcommand{\CP}{\mathbb{C}P}

\newcommand{\SL}{\mathrm{SL}}

\newcommand{\FS}{\mathrm{FS}}

\title[Rescaling principle and hyperbolicity]{A rescaling principle for quasiregular curves with applications to hyperbolicity}
\author{Toni Ikonen}
\author{Jonathan Pim}

\address{Department of Mathematics\\ University of Fribourg\\  Chemin du Mus\'ee 23\\  1700 Fribourg, Switzerland}
\email{toni.ikonen@unifr.ch}

\address{Department of Mathematics and Statistics, P. O. Box 68 (Pietari Kalmin katu 5), FI-00014 University of Helsinki, Finland}
\email{jonathan.pim@helsinki.fi}

\keywords{Quasiconformal geometry, calibration, isoperimetric inequality, conformal mapping, pseudoholomorphic curve, quasiregular mapping, Smith map, conformal curve, rescaling principle}
\thanks{The first author was supported by the Swiss National Science Foundation grant 212867. The second author was supported by the Research Council of Finland, project number 332671 and Center of Excellence FiRST}
\subjclass[2020]{Primary 30C65; Secondary 53C38, 53C65, 46E36, 49Q15}

\begin{document}

\begin{abstract}
    We prove a Miniowitz--Zalcman rescaling principle for quasiregular curves into calibrated manifolds. We have two main applications.
    
    First, we introduce Brody hyperbolicity adapted to our setting and prove its equivalence to the normality of the family of quasiregular curves from the Euclidean unit ball into the target. When normality holds, we quantify the local modulus of continuity for quasiregular curves using an injectivity radius lower bound and a sectional curvature upper bound of the target.
    
    Second, in the special case of conformal curves into closed calibrated manifolds, we prove the equivalence of Kobayashi and Brody hyperbolicity. This answers a question posed by Broder--Iliashenko--Madnick. As an intermediate result, we prove an analogue of Marty's theorem from complex analysis in this setting.
    
    Additionally, we construct new examples of non-constant entire quasiregular curves factoring through a Special Lagrangian submanifold of a closed Calabi--Yau manifold, an associative submanifold of a closed $G_2$ manifold, and four-dimensional analogues thereof, providing obstructions to Brody hyperbolicity.
\end{abstract}

\maketitle

{
    \hypersetup{linkcolor=black}
    \setcounter{tocdepth}{1}
    \tableofcontents
}

\section{Introduction}

\subsection{Background and the main theorem}

    In this manuscript, we investigate rescaling principles and their applications in the calibrated geometries of Harvey and Lawson \cite{harvey-lawson-1982-calibrated-geometries}.
    A classical instance of such a principle is \emph{Brody's theorem} \cite{brody-1978-compact-manifolds-and-hyperbolicity} in complex geometry:
    \emph{for a closed complex manifold $N$, the family of holomorphic maps from the unit disk $\mathbb{D} \subset \mathbb{C}$ into $N$ is normal if and only if there exists no non-constant entire holomorphic map $\mathbb{C} \to N$.}
    This connection between normality and non-existence of entire holomorphic maps is sharpened by \emph{Zalcman's lemma} \cite{zalcman-1975-a-heuristic-principle-in-complex-function-theory}; see also Royden's work \cite{royden-1971-remarks-on-the-kobayashi-metric} on Kobayashi hyperbolicity \cite{kobayashi-1970-hyperbolic-manifolds-and-holomorphic-mappings}.

    The rescaling principle has also been influential in the theories of various generalizations of holomorphic mappings. These include \emph{quasiregular mappings} and the Miniowitz--Zalcman lemma \cite{miniowitz-1982-normal-families-of-quasimeromorphic-mappings}, and Kobayashi hyperbolicity \cite{kruglikov-overholt-1999-pseudoholomorphic-mappings-and-kobayashi-hyperbolicity} for Gromov's \emph{pseudoholomorphic curves} \cite{gromov-1985-pseudo-holomorphic-curves-in-symplectic-manifolds}. More recently, Broder, Iliashenko, and Madnick developed in \cite{broder-iliashenko-madnick-2025-hyperbolicity-and-schwarz-lemmas-in-calibrated-geometry} a theory of hyperbolicity in the context of the \emph{calibrated geometries} of Harvey and Lawson \cite{harvey-lawson-1982-calibrated-geometries}. In particular, they formulated notions of Kobayashi and Brody hyperbolicity using \emph{Smith maps} \cite{smith-2012-a-theory-of-multiholomorphic-maps} and asked if they are equivalent for closed manifolds. In this manuscript, we give a positive answer to their question. For the proof and other applications, we establish a generalized Miniowitz--Zalcman rescaling principle for \emph{quasiregular curves} \cite{pankka-quasiregular-curves}, a class which includes quasiregular mappings, pseudoholomorphic curves, Smith maps, and mappings satisfying Monge--Ampère type inequalities \cite{iwaniec-verchota-vogel-2002-the-failure-of-rank-one-connections} or generalized Beltrami-type equations \cite{rosay-2010-uniqueness-in-rough-almost-complex-structures-and-differential-inequalities}.
    
    To define quasiregular curves, we first need a definition for a calibrated manifold: given integers $2 \leq n \leq m$, we say that a pair $(N,\omega)$ is an \emph{$n$-calibrated $m$-manifold} if $N$ is an $m$-dimensional (Riemannian) manifold and $\omega$ is a strongly non-vanishing \(n\)-calibration, i.e. a closed differential $n$-form on $N$ whose comass is bounded from above by one and below by a positive constant.
    
    Now, given an oriented (Riemannian) $n$-manifold \(M\) and $K \geq 1$, a continuous map $F \colon M \to ( N, \omega )$ is a \emph{\(K\)-quasiregular curve} if $F$ is in the local Sobolev space $W^{1,n}_{\text{loc}}(M,N)$ and
    \begin{align}\label{equation-quasiregular-curve-distortion-inequality}
        ( \|\omega\| \circ F ) \|DF\|^n \leq K \star F^{*}\omega
        \quad\text{almost everywhere in}\ M.
    \end{align}
    Here \(\norm{\omega}\) is the \emph{pointwise comass} of \(\omega\), and \(\star \colon \Omega^n(M) \to \Omega^0(M)\) is the Hodge star operator on \(M\).

    In this terminology, \(K\)-quasiregular mappings \(M \to N\) are \(K\)-quasiregular curves into \((N,\vol_N)\) where \(\vol_N\) is the Riemannian volume form; see \cite{rickman-1993-quasiregular-mappings} for the definition. Similarly, pseudoholomorphic curves into a symplectic manifold \((N,\omega_{\mathrm{symp}})\) equipped with an \(\omega_{\mathrm{symp}}\)-tame almost complex structure are \(1\)-quasiregular curves into \((N,\omega_{\mathrm{symp}})\); see \cite[Chapter II]{mcduff-salamon-2012-j-holomorphic-curves-and-symplectic-topology}. Conversely, every such \(1\)-quasiregular curve is pseudoholomorphic by elliptic regularity theory \cite[Appendix B.4]{mcduff-salamon-2012-j-holomorphic-curves-and-symplectic-topology}.
    See \cite{pankka-quasiregular-curves, heikkila-pankka-prywes-2023-quasiregular-curves-of-small-distortion-in-product-manifolds, ikonen-pankka-2024-liouvilles-theorem-in-calibrated-geometries,heikkila-2024-quasiregular-curves-and-cohomology} for further examples of quasiregular curves.

    We now formulate our main theorem.
\begin{theorem}[Miniowitz--Zalcman rescaling principle]\label{theorem-rescaling-principle}
    Let $(N,\omega)$ be a closed $n$-calibrated $m$-manifold, and let $\mathcal{F}$ be a family of $K$-quasiregular curves $M \to (N,\omega)$ for an oriented $n$-manifold $M$ and \(K \geq 1\).
    If $\mathcal{F}$ is not a normal family at a point $x \in M$, then there exists a non-constant $K$-quasiregular curve $G \colon \R^n \to (N,\omega)$. More precisely, there exist sequences $v_j \in T_{x} M$ with $v_j \rightarrow 0$, constants $\rho_j \rightarrow 0^{+}$, and $F_j \in \mathcal{F}$ such that the sequence
        \begin{align*}
            z \mapsto G_j( z ) = F_j( \mathrm{exp}_{x}( v_j + \rho_j z ) )
        \end{align*}
        converges locally uniformly to a non-constant $K$-quasiregular curve $G \colon T_x M \to (N,\omega)$. Moreover, the pairs $(v_j, \rho_j)_j$ can be chosen so that the limit map $G$ is uniformly Hölder continuous.
\end{theorem}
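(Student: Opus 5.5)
The plan is to follow the Zalcman--Miniowitz scheme, with Hölder equicontinuity and compactness of quasiregular curves playing the role of Marty's theorem and Hurwitz's theorem. The key tools are:
\begin{itemize}
    \item[(a)] a local Hölder estimate for $K$-quasiregular curves into $(N,\omega)$, with constants depending on $n,K,\omega$ and an upper bound for the energy $\int \|DF\|^n$ on a slightly larger ball;
    \item[(b)] closedness of the class of $K$-quasiregular curves under locally uniform convergence, which follows from weak lower semicontinuity of $\int \|DF\|^n$ together with weak continuity of $F^*\omega$, since $\omega$ is closed;
    \item[(c)] Arzelà--Ascoli, using that $N$ is compact.
\end{itemize}

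\emph{Localization and reduction.} First I localize at $x$: in normal coordinates $\exp_x$ on a small ball $B(0,r)\subset T_xM\cong\R^n$, the maps $F\circ\exp_x$ are $K'$-quasiregular curves for the pulled-back metric. The metric distortion tends to $1$ after blow-up, so the limit will be $K$-quasiregular for the Euclidean structure. Non-normality at $x$ means that on every neighbourhood of $x$ the family is not equicontinuous, since $N$ is compact.

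By (a), non-equicontinuity forces energy concentration: the local energies $\sup_{\mathcal F}\int_{B(y,s)}\|DF\|^n$ cannot tend to $0$ uniformly as $s\to 0$ near $x$. Energy is available here because $\|\omega\|$ is bounded below and (1.1) give $\|DF\|^n\le C\,\star F^*\omega$. Combined with Stokes, this yields a Caccioppoli-type bound controlling energy on small balls only through isoperimetric arguments. That is the delicate part, so I prefer to quantify non-equicontinuity directly with an $\alpha$-Hölder seminorm.

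\emph{Zalcman selection.} Fix $\alpha\in(0,1)$ smaller than the Hölder exponent in (a). Define
\[
M_F=\sup_{y,z\in \overline B(0,r)}\frac{(r-|y|)^{\alpha}\, d(F(y),F(z))}{|y-z|^{\alpha}}
\]
with the weight taken at the nearer point, in the spirit of the Zalcman--Pang trick. Non-normality gives $F_j$ with $M_{F_j}\to\infty$ on shrinking balls around $x$. I then choose near-maximizing points $v_j$ and set $\rho_j$ so that $\rho_j^{\alpha} M_{F_j}(r-|v_j|)^{-\alpha}\approx 1$.

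The rescaled maps $G_j$ are then uniformly $\alpha$-Hölder with constant about $2$ on balls $B(0,R_j)$, where $R_j\to\infty$. Moreover, they have a pair of points at distance $\lesssim 1$ whose images are separated by about $1/2$. By (c), a subsequence converges locally uniformly to a map $G$ that is uniformly $\alpha$-Hölder, which gives the last claim, and non-constant. By (b) together with the metric distortion tending to $1$, $G$ is $K$-quasiregular. A diagonal argument over shrinking $r$ gives $v_j\to 0$ and $\rho_j\to 0$.

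\emph{Main obstacle.} The hard step is the inverse direction of (a): showing that non-normality, i.e.\ non-equicontinuity, actually makes the $\alpha$-weighted seminorm $M_{F_j}$ blow up. Without this, the near-maximizers in the selection step have nothing to select. For quasiregular maps this uses that (a) holds with any exponent below the natural one. Here I would argue by contradiction. If $M_F$ were uniformly bounded near $x$, the family would be uniformly Hölder there, hence equicontinuous, hence normal. This direction is trivial.

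The real difficulty is therefore proving (a) itself, namely a uniform modulus of continuity that depends only on energy. For this I would use a monotonicity or isoperimetric inequality for the calibration: small-energy balls have small image diameter. This works because $F^*\omega$ is exact locally in $N$, so on small target balls $\omega=d\tau$ with $|\tau|\lesssim$ diameter.
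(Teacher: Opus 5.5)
Your proposal has a genuine gap: the non-constancy of the limit is not justified. Your selection only controls the rescaled near-maximizing pair $(0,\zeta_j)$, with $\zeta_j=(z_j-v_j)/\rho_j$, through
\[ d(G_j(0),G_j(\zeta_j)) \ge \tfrac12 |\zeta_j|^{\alpha}, \qquad |\zeta_j| \le (2\diam N)^{1/\alpha}. \]
Nothing rules out $|\zeta_j|\to 0$. That happens if the weighted Hölder quotient is nearly maximized at scales much smaller than $\rho_j$, and then the separation vanishes and $G$ may be constant. So your claim of a pair of points "separated by about $1/2$" does not follow. You chose $\alpha$ below the natural exponent but never used that choice, and this is where it is needed. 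Suppose the $G_j$ satisfy a uniform small-scale bound $d(G_j(0),G_j(\zeta))\le C|\zeta|^{\beta}$ for $|\zeta|\le s_0$, with $\beta>\alpha$. Then $|\zeta_j|\ge\min\{s_0,(2C)^{-1/(\beta-\alpha)}\}$, so the separation is bounded below uniformly and survives the limit.

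Your plan lacks the ingredient that produces this uniform bound. First, (a) as you state it is false for closed targets. The conformal maps $z\mapsto\pi^{-1}(\lambda z)$ into $(\mathbb{S}^n,\vol)$, with $\pi$ the stereographic projection, have energy at most $\mathcal{H}^n(\mathbb{S}^n)$ on $\mathbb{B}$, yet they are not equicontinuous at $0$; this is bubbling. The Hölder estimate holds only below the threshold $E_N$ (\Cref{theorem-modulus-of-continuity-estimates}). So you must show $E(G_j|_{\mathbb{B}(\xi,s)})\le E_N$ uniformly in $j$ for some small $s$. The only information you have on $G_j$ is the uniform $\alpha$-Hölder bound, i.e. small oscillation. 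Passing from small oscillation to small energy is exactly the Caccioppoli inequality (\Cref{proposition-caccioppoli-inequality}), which is the step you said you preferred to avoid. Your last sentence (a potential $\tau$ with $\|\tau\|\lesssim$ radius on small target balls) is in fact the proof of that Caccioppoli bound, which goes from small image to small energy. It does not prove (a), which rests on the small-mass isoperimetric filling of $F_*[\partial\mathbb{B}(x_0,s)]$. With the Caccioppoli bound followed by \Cref{theorem-modulus-of-continuity-estimates} applied to $G_j$ on small balls, your argument closes. This is the "straightforward adaptation" of the classical proof that the paper mentions.

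The paper avoids the issue by selecting scales through energy. It sets $\hat\rho_j(x)=\inf\{\rho : E(F_j|_{\mathbb{B}(x,\rho)})\ge E_N\}$ with weight $\tfrac12-|x|$, which gives $E(G_j|_{\mathbb{B}})\ge E_N$ and $E(G_j|_{\mathbb{B}(y,1/4)})\le E_N$ on $\mathbb{B}(R_j/2)$. The upper bound yields uniform Hölder continuity directly. The lower bound yields non-constancy via weak convergence of $G_j^*\omega$ (\Cref{corollary-convergence-of-pullbacks}), since $\int_{\mathbb{B}}G^*\omega\ge (c/K)E_N>0$. No pair of points has to be tracked.
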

    In the statement above, $\mathrm{exp}_x \colon U \to M$ is the exponential map defined on an open neighbourhood $U \subset T_xM$ of the origin. Notice that since $v_j \rightarrow 0$ and $\rho_j \rightarrow 0^{+}$, the domain of definition of $G_j$ converges to the tangent space $T_x M$ as $j \to \infty$. The local uniform convergence of mappings with variable domains is understood in the standard way: for every compact set $C \subset T_x M$ and large enough $j$, the restrictions $G_j|_C$ are well-defined and converge uniformly to the limiting map. We follow this convention here and below in the manuscript. 

    Our proof of \Cref{theorem-rescaling-principle} is based on rescaling at points where the family concentrates a definite amount of energy. This differs from the standard proof of the Miniowitz--Zalcman rescaling principle (\cite[Lemma~1]{miniowitz-1982-normal-families-of-quasimeromorphic-mappings} or \cite[Theorem~19.7.3]{Iwaniec-Martin-Geometric-Function-Theory-Non-Linear-Analysis}) based on local Hölder estimates. The critical lower bound for the energy depends on the geometry of the target, through its isoperimetric profile. Observe that the relevant profiles are in codimension $m-n$ and $m-n+1$ which may be large and are therefore naturally formulated using the theory of integral currents. These methods are flexible enough that we prove a version of \Cref{theorem-rescaling-principle} for non-compact targets which satisfy a sectional curvature upper bound and an injectivity radius lower bound; see \Cref{theorem-rescaling-principle-equiboundedness-build-in}.

\subsection{Normality, Brody hyperbolicity and quasiregular ellipticity}
    As one of our main applications of \Cref{theorem-rescaling-principle}, we prove the following:
\begin{theorem}[Brody's theorem for calibrated manifolds]\label{theorem-qr-ellipticity-equivalent-to-normality}
    Let $(N,\omega)$ be a closed $n$-calibrated $m$-manifold and $K \geq 1$. Then the following are equivalent.
    \begin{enumerate}
        \item every $K$-quasiregular curve $\R^n \to (N,\omega)$ is constant;

        \item the family of $K$-quasiregular curves $M \to (N,\omega)$ is a normal family, for each oriented $n$-manifold \(M\);
        
        \item the family of $K$-quasiregular curves $\R^n \supset \mathbb{B} \to (N,\omega)$ is a normal family for the unit ball $\mathbb{B}$.
    \end{enumerate}
\end{theorem}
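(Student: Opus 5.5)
We prove the cycle of implications (1) $\Rightarrow$ (2) $\Rightarrow$ (3) $\Rightarrow$ (1).

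\emph{(1) $\Rightarrow$ (2).} This is the contrapositive of \Cref{theorem-rescaling-principle}. Suppose that for some oriented $n$-manifold $M$ the family of $K$-quasiregular curves $M \to (N,\omega)$ is not normal. Normality in a compact target is a local property: by Arzel\`a--Ascoli and a diagonal argument, a family is normal if and only if it is normal at every point. Hence non-normality fails at some $x \in M$. \Cref{theorem-rescaling-principle} then gives a non-constant $K$-quasiregular curve $G \colon T_xM \to (N,\omega)$. Fix an orientation-preserving linear isometry $\R^n \to T_xM$ for the metric $g_x$ and compose. Since the distortion inequality \eqref{equation-quasiregular-curve-distortion-inequality} involves only $\|DF\|$ and $\star F^*\omega$, it is invariant under isometries of the domain. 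The composition is therefore a non-constant $K$-quasiregular curve $\R^n \to (N,\omega)$, contradicting (1).

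\emph{(2) $\Rightarrow$ (3).} Take $M=\mathbb{B}$ with the Euclidean metric and the standard orientation.

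\emph{(3) $\Rightarrow$ (1).} This is the Brody-type direction and the step needing the most care. Suppose $G \colon \R^n \to (N,\omega)$ is a non-constant $K$-quasiregular curve. Define $G_j(z) = G(jz)$ on $\mathbb{B}$. Precomposition with the dilation $z \mapsto jz$ multiplies $\|DG\|^n$ and $\star G^*\omega$ by the same factor $j^n$. So \eqref{equation-quasiregular-curve-distortion-inequality} is preserved, and each $G_j$ belongs to the family in (3).

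By (3), some subsequence converges locally uniformly on $\mathbb{B}$, and in particular uniformly on $\overline{B}(0,1/2)$. Uniform convergence of continuous maps on a compact set implies equicontinuity there. Hence for every $\varepsilon>0$ there is $\delta>0$ such that, for all large $j$ in the subsequence and all $|z-w|<\delta$ with $z,w \in \overline{B}(0,1/2)$, we have $d(G_j(z),G_j(w))<\varepsilon$.

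Now fix points $p,q \in \R^n$. For $j$ large, both $p/j$ and $q/j$ lie in $\overline{B}(0,1/2)$ and $|p/j-q/j|<\delta$. Since $G(p)=G_j(p/j)$ and $G(q)=G_j(q/j)$, this gives $d(G(p),G(q))<\varepsilon$. As $\varepsilon$ is arbitrary, $G(p)=G(q)$, so $G$ is constant, a contradiction.

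The only subtle points are the localization of normality in (1) $\Rightarrow$ (2) and the invariance of the distortion inequality under domain isometries and dilations. Both follow from the compactness of $N$ and the form of \eqref{equation-quasiregular-curve-distortion-inequality}.
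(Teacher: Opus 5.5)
Your proof is correct. For (1) $\Rightarrow$ (2) you follow the paper. The paper obtains \Cref{theorem-qr-ellipticity-equivalent-to-normality} as the compact case of \Cref{theorem-liouville-property-equivalent-normality}, and gets this implication from the rescaling principle (\Cref{theorem-rescaling-principle-equiboundedness-build-in}). Your observation that a non-normal family must fail to be normal at some point (countable cover plus a diagonal argument) is a step the paper leaves implicit, and it is worth keeping. Note the slip ``non-normality fails at some $x$'', which should read ``normality fails''.

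The genuine difference is in (3) $\Rightarrow$ (1). The paper also uses $F_j(y)=G(jy)$, but it then notes that $z\mapsto F_j(0+j^{-1}z)=G(z)$ is a Zalcman sequence at $0$ converging to the non-constant $G$. It then invokes the converse half of \Cref{theorem-energy-rescaling-principle}. That converse rests on \Cref{lem:zalcman-seq-at-normal-point}(3), which is energy-based: normality gives equicontinuity; by \Cref{theorem-equicontinuous-family} this means uniformly small energy on small balls; then the lower semicontinuity in \Cref{corollary-convergence-of-pullbacks} forces every Zalcman limit to have zero energy.

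Your argument bypasses all of this. It is purely topological: it uses only that uniform convergence on a compact set forces equicontinuity there, and that dilations preserve $K$-quasiregularity. In fact it shows that $\{G(j\,\cdot)\}$ fails to be equicontinuous at $0$ for any non-constant continuous $G$. The paper's route buys a uniform presentation, since both directions run through Zalcman sequences and the energy threshold $E_N$. Yours is shorter and needs none of the isoperimetric machinery for this direction. Both extend verbatim to the bounded/equibounded setting of \Cref{theorem-liouville-property-equivalent-normality}.
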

    Property (1) is associated with two complementary notions: if \Cref{theorem-qr-ellipticity-equivalent-to-normality} (1) holds, we say that $(N,\omega)$ is \emph{Brody $K$-hyperbolic} in analogy to \cite{brody-1978-compact-manifolds-and-hyperbolicity}. Otherwise $(N,\omega)$ is \emph{$K$-quasiregularly elliptic} following  \cite{heikkila-2024-quasiregular-curves-and-cohomology}. Quasiregular ellipticity for mappings was introduced by Bonk--Heinonen in \cite{bonk-heinonen-2001-quasiregular-mappings-and-cohomology}. For mappings, the complete classification of closed quasiregularly elliptic \(3\)-manifolds is due to Jormakka \cite{jormakka-1988-the-existence-of-quasiregular-mappings-from-r3-to-closed-orientable-3-manifolds}, following the resolution of Thurston's Geometrization Conjecture by Perelman. 
    Recently, the classification for closed \(4\)-manifolds was completed; see Heikkilä--Pankka and Piergallini--Zuddas \cite{heikkila-pankka-2025-de-rham-algebras-of-closed-quasiregularly-elliptic-manifolds-are-euclidean,piergallini-zuddas-2021-branched-coverings-of-basic-4-manifolds} for the simply connected case and Manin--Prywes \cite{manin-prywes-2025-elliptic-quasiregularly-elliptic-manifolds} for the general case. 
    In \Cref{section-examples}, we show that many of these manifolds occur as calibrated submanifolds of closed manifolds with special holonomy. Moreover, by applying Bryant's construction of Calabi--Yau manifolds \cite{bryant-2000-calibrated-embeddings-in-the-special-lagrangian-and-coassociative-cases}, we show that every closed manifold in Jormakka's classification occurs as a Special Lagrangian in a non-compact Calabi--Yau manifold, cf. \Cref{corollary-reaping-the-harvest}. These provide new examples of quasiregular ellipticity for curves.

    Recently, the second author and Pankka showed in \cite{pankka-pim-2025-nodal-resolution-of-quasiregular-curves-via-bubble-trees} that \Cref{theorem-qr-ellipticity-equivalent-to-normality} (2) is closely related to non-trivial \emph{bubbling} of quasiregular curves; see also the bubbling phenomenon for harmonic maps by Sacks--Uhlenbeck \cite{sacks-uhlenbeck-1981-the-existence-of-minimal-immersions-of-2-spheres}, pseudoholomorphic curves (see \cite{gromov-1985-pseudo-holomorphic-curves-in-symplectic-manifolds} and \cite[Section 4]{mcduff-salamon-2012-j-holomorphic-curves-and-symplectic-topology}), Smith maps \cite{cheng-karigiannis-madnick-2020-bubble-three-convergence-of-conformally-cross-product-preserving-maps}, and quasiregular mappings \cite{pankka-souto-2023-bubbling-of-quasiregular-maps}. Indeed, if there exists a family $\mathcal{F}$ of $K$-quasiregular curves $M \to ( N, \omega )$ with \emph{bounded energy} and which is not normal, then there exists a non-constant $K$-quasiregular curve $\S^n \to ( N, \omega )$ by \cite{pankka-pim-2025-nodal-resolution-of-quasiregular-curves-via-bubble-trees}. Conversely, if there exists a non-constant $K$-quasiregular curve $\S^n \to ( N, \omega )$, such a family $\mathcal{F}$ is formed by the orbit of the curve under the action of the conformal automorphism group of $\S^n$.
    
    We expect that families of non-constant quasiregular curves $\S^n \to ( N,\omega)$ can be used to define a non-trivial homological and geometric invariant of the target $(N,\omega)$. Indeed, the Hurewicz homomorphism $\pi_n(N) \to H_n(N, \mathbb{Z})$ maps the homotopy classes of such curves $\S^n \to ( N, \omega )$ into a \emph{cone} of non-trivial homology classes of \emph{positive $\omega$-currents} of Harvey--Lawson \cite{harvey-lawson-1982-calibrated-geometries}, cf. \Cref{section-homological-non-triviality}. As a related invariant, Joyce proposed in \cite{joyce-2002-on-counting-special-lagrangian-homology-3-spheres} a suitable \emph{weighted count} of the homology classes of Special Lagrangian rational homology $3$-spheres on Calabi--Yau manifolds. To see the relation between these concepts, we recall that if such a rational homology sphere in $(N,\omega_{\SL})$, where $\omega_{\SL}$ is the Special Lagrangian calibration, has a finite fundamental group, its universal cover is diffeomorphic to $\S^3$ and thus the rational homology sphere is the image of a quasiregular curve $\S^3 \to ( N, \omega_{\SL} )$. This discussion readily extends to associative rational homology spheres in $G_2$ manifolds \cite{joyce-2018-conjectures-on-counting-associative-4-folds-in-g2-manifolds}. For the closely related theory of Gromov--Witten invariants for pseudoholomorphic curves, we refer to \cite[Chapter 7]{mcduff-salamon-2012-j-holomorphic-curves-and-symplectic-topology}.

    In \Cref{theorem-liouville-property-equivalent-normality} below, we prove a version of \Cref{theorem-qr-ellipticity-equivalent-to-normality} for non-compact targets which have a sectional curvature upper bound and an injectivity radius lower bound.

\subsection{Applications to notions of hyperbolicity for conformal curves}\label{section-kobayashi-hyperbolicity}
    In this section, we focus on a calibrated manifold $(N,\omega)$ and \emph{$1$-quasiregular curves} $f \colon M \to ( N, \omega )$ that map into $\{ x \in N \colon \|\omega\|(x) = 1 \}$; these are also called \emph{conformal curves} in \cite{ikonen-pankka-2024-liouvilles-theorem-in-calibrated-geometries}, \emph{calibrated curves} in \cite{heikkila-pankka-prywes-2023-quasiregular-curves-of-small-distortion-in-product-manifolds}, and \emph{Smith maps} in \cite{smith-2012-a-theory-of-multiholomorphic-maps}. We adopt the term \emph{conformal curve} for this manuscript.

    Every conformal curve is locally weakly $n$-harmonic and thus $\mathcal{C}^{1,\alpha}_{\loc}$-regular by the regularity theorem of Hardt--Lin \cite{hardt-lin-1987-mappings-minimizing-the-lp-norm-of-the-gradient}. Using this fact, Broder--Iliashenko--Madnick introduced in \cite{broder-iliashenko-madnick-2025-hyperbolicity-and-schwarz-lemmas-in-calibrated-geometry} an \emph{extended Finslerian metric} on each calibrated manifold. To set up the definition, we denote by $\mathcal{F}_{1}(\omega)$ the family of conformal curves $\mathbb{B} \to ( N, \omega )$. We define for each tangent vector $v \in TN$,
    \begin{align*}
        \mathcal{K}_{(N,\omega)}(v)
        \coloneqq
        \inf\left\{
            a > 0
            \mid
            \text{$f \in \mathcal{F}_1(\omega)$, $(Df)_0(w) = a^{-1}v$ for some $|w| = 1$}
        \right\}.
    \end{align*}
    The infimum over the empty set is understood to be infinite. Since constant maps are allowed, obviously $\mathcal{K}_{(N,\omega)}$ vanishes on the zero section. We also note that the functional is positively homogeneous \cite{broder-iliashenko-madnick-2025-hyperbolicity-and-schwarz-lemmas-in-calibrated-geometry}. Using the above, we define the following pseudodistance
    \begin{align*}
        d_{(N,\omega)}(x,y)
        \coloneqq
        \inf\left\{
            \int_{ \gamma }^{*}
                \mathcal{K}_{(N,\omega)}( \gamma'(t) )
            \,d t
        \right\},
    \end{align*}
    where the infimum is taken over piecewise $\mathcal{C}^1$-regular curves joining $x$ to $y$. Since the measurability of $t \mapsto \mathcal{K}_{(N,\omega)}( \gamma'(t) )$ is not obvious in this generality, we use the upper integral in the definition of $d_{(N,\omega)}$ (see e.g. \cite[2.4.2]{federer-1969-geometric-measure-theory} for a definition).  
    
    Following \cite{broder-iliashenko-madnick-2025-hyperbolicity-and-schwarz-lemmas-in-calibrated-geometry}, we define three notions of hyperbolicity closely related to the discussion so far:
    \begin{enumerate}
        \item $(N,\omega)$ is \emph{$\omega$-hyperbolic} if every conformal curve $\R^n \to ( N, \omega )$ is constant, i.e. Brody $1$-hyperbolicity holds for this subclass of quasiregular curves;
        \item $(N,\omega)$ is \emph{$K_\omega$-hyperbolic} if $d_{ (N,\omega) }(x,y) > 0$ for every $x \neq y$;
        \item $(N,\omega)$ is \emph{$R_\omega$-hyperbolic} if, for each $x \in N$, there exist $c > 0$ and $r > 0$ such that $\mathcal{K}_{(N,\omega)}(v) \geq c|v|$ for every tangent vector $v \in T( \mathbb{B}_N(x,r) )$.
    \end{enumerate}
    Broder, Iliashenko, and Madnick proved the implications `(3) $\Rightarrow$ (2) $\Rightarrow$ (1)' under mild assumptions and asked in \cite[Section 7, Question 3]{broder-iliashenko-madnick-2025-hyperbolicity-and-schwarz-lemmas-in-calibrated-geometry} whether these notions of hyperbolicity coincide when $N$ is a closed calibrated manifold. The following theorem answers this positively.
    \begin{theorem}\label{theorem-characterization-of-hyperbolicity-calibration}
        Let $(N,\omega)$ be a closed $n$-calibrated $m$-manifold. If $(N,\omega)$ is $\omega$-hyperbolic, then there exists $C> 0$ such that
        \begin{align}\label{equation:royden-strong}
            \mathcal{K}_{(N,\omega)}(v)
            \geq
            C^{-1} |v|
            \quad\text{for every $v \in TN$.}
        \end{align}
        In particular, $(N,\omega)$ is $R_\omega$- and $K_\omega$-hyperbolic. Furthermore, if $(N,\omega)$ is not $\omega$-hyperbolic, then $(N,\omega)$ is not $K_\omega$-hyperbolic.
    \end{theorem}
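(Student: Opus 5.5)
We argue by contradiction, using \Cref{theorem-rescaling-principle} with $K=1$ together with a Marty-type characterization. Suppose \eqref{equation:royden-strong} fails for every $C>0$. Then there are $v_j \in TN$, $v_j \ne 0$, with $\mathcal{K}_{(N,\omega)}(v_j) < j^{-1}|v_j|$. By positive homogeneity we may normalize $|v_j|=1$. By definition of the infimum, there are conformal curves $f_j \in \mathcal{F}_1(\omega)$ and unit vectors $w_j$ with $(Df_j)_0(w_j) = a_j^{-1} v_j$ for some $a_j < j^{-1}$. Hence $\|(Df_j)_0\| \ge a_j^{-1} \to \infty$.

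The first step is to show that $\{f_j\}$ is not normal at $0 \in \mathbb{B}$. Suppose instead that a subsequence converges locally uniformly near $0$. The limit is again a conformal curve, by closedness of the class of $1$-quasiregular curves mapping into $\{\|\omega\|=1\}$ under local uniform convergence. We then need $C^1$ control to contradict the blow-up of the derivatives at $0$. For this I would use that conformal curves are weakly $n$-harmonic, together with the Hardt--Lin theory: a uniform small-oscillation (or bounded-energy) condition on a small ball should give uniform $\mathcal{C}^{1,\alpha}$ bounds. Local uniform convergence gives uniformly small oscillation on a small ball $B(0,r)$. The energy on $B(0,r/2)$ is bounded through Caccioppoli's inequality for $n$-harmonic maps into the compact $N$. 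Together these yield a uniform bound on $\|(Df_j)_0\|$, a contradiction.

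This step is the Marty-type intermediate result and is the main obstacle. Stated carefully, it says: \emph{a family of conformal curves is normal at $x$ if and only if $\|Df\|$ is locally uniformly bounded near $x$.} If the $\epsilon$-regularity route is problematic, I would instead argue directly by rescaling. From non-bounded derivatives, rescale $f_j(x_j+\rho_j z)$ with $\rho_j\|Df_j(x_j)\|=1$ at almost-maximal points of a weighted derivative (the Brody/Zalcman reparametrization trick). The $\mathcal{C}^{1,\alpha}$ estimates on uniformly Lipschitz conformal curves then give $C^1$ convergence to a conformal curve $\R^n \to N$ with derivative of norm $1$ at the origin. This is non-constant and contradicts $\omega$-hyperbolicity directly. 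This route in fact bypasses the non-normality step, so I would present it as the main argument.

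Granting non-normality, \Cref{theorem-rescaling-principle} with $K=1$ produces a non-constant $1$-quasiregular $G\colon \R^n \to (N,\omega)$ as a local uniform limit of rescalings. It remains to check that $G$ is a conformal curve, i.e. that it maps into $\{\|\omega\|=1\}$. This holds because the set $\{\|\omega\|=1\}$ is closed by continuity of the comass and the images of the $G_j$ lie in it. Hence $G$ contradicts $\omega$-hyperbolicity, and \eqref{equation:royden-strong} holds. The $R_\omega$-hyperbolicity is immediate from \eqref{equation:royden-strong}. For $K_\omega$-hyperbolicity, \eqref{equation:royden-strong} gives $d_{(N,\omega)}(x,y) \ge C^{-1} d_N(x,y) > 0$, since the upper integral dominates $C^{-1}$ times the Riemannian length.

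For the last assertion, let $G\colon\R^n\to N$ be a non-constant conformal curve. It is $\mathcal{C}^1$, so $(DG)_{z_0}w \ne 0$ for some $z_0$ and some unit $w$. For $R>0$, the map $f_R(z) = G(z_0 + Rz)$ lies in $\mathcal{F}_1(\omega)$ and satisfies $(Df_R)_0(w) = R\,(DG)_{z_0}w$. Hence $\mathcal{K}(v)\le R^{-1}$ for $v = (DG)_{z_0}w$, and letting $R \to \infty$ gives $\mathcal{K}(v) = 0$. Doing this along the curve $t\mapsto G(z_0+tw)$, for small $t$ where $DG\,w\neq 0$, shows that $\mathcal{K}$ vanishes on its tangent vectors. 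Therefore $d_{(N,\omega)}(G(z_0),G(z_0+tw)) = 0$ while the two points are distinct, so $(N,\omega)$ is not $K_\omega$-hyperbolic.
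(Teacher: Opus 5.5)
Your proposal is correct, but your main argument takes a different route from the paper's. The paper obtains \eqref{equation:royden-strong} in two steps. First, the Marty-type \Cref{proposition-equicontinuity-results} shows that $\|DF_j\|(0)\to\infty$ makes $\mathcal{F}_1(\omega)$ non-normal; it uses the Möbius invariance of $\mathcal{F}_1(\omega)$, the energy characterization of normality in \Cref{corollary-normal-family}, and the Hardt--Lin estimates. Second, the energy-based rescaling principle \Cref{theorem-energy-rescaling-principle}, together with closedness of conformal curves under local uniform limits, produces a non-constant entire conformal curve.

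Your fallback route is essentially this, with the Möbius-invariance step replaced by a local argument at the origin: small oscillation, then \Cref{proposition-caccioppoli-inequality}, then \Cref{lemma-smith-maps-are-n-harmonic} and Hardt--Lin. There, $\varepsilon$-regularity needs \emph{small} energy, not merely bounded energy, since the $n$-energy is scale invariant. Caccioppoli does give smallness from small oscillation.

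Your main route is the classical Brody reparametrization at almost-maximal points of $(1-|x|^2)\|Df_j\|(x)$, and it bypasses both the Marty proposition and the paper's rescaling principle. Uniform Lipschitz bounds on the rescaled maps give small energy on small balls, hence minimality by \Cref{lemma-smith-maps-are-n-harmonic} and uniform $\mathcal{C}^{1,\alpha}$ bounds. So the unit derivative at the origin survives in the limit, and the limit even has globally bounded derivative. The trade-off is that this relies on pointwise derivative control, which is available only for $K=1$. The paper's route instead applies its energy rescaling principle, valid for all $K$, and yields the Marty-type characterization as a by-product.

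One technicality: before choosing almost-maximal points, restrict $f_j$ to a smaller ball such as $\mathbb{B}(1/2)$. Otherwise $\|Df_j\|$ may blow up at $\partial\mathbb{B}$ and the weighted supremum could be infinite. Your argument for the last assertion is the paper's, with the straight segment making the distinctness of the endpoints explicit.
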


    \begin{remark}
        \Cref{theorem-characterization-of-hyperbolicity-calibration} has been recently proved by Iliashenko, Karigiannis, and Madnick \cite{iliashenko-karigiannis-madnick-2026-montels-theorem-and-tautness-in-calibrated-geometries} and by Broder, Hegarty, and Hudecek (personal communication).
    \end{remark}

    \subsection{Structure of the paper}
    We outline the structure of the paper.
    
    In \Cref{section-preliminaries}, we introduce our fairly standard notation and prerequisites on Sobolev maps.
    
    In \Cref{section-sectional-curvature-bounds-quantitative-geometry}, we formulate our basic setup: that of calibrated manifolds which have a sectional curvature upper bound and an injectivity radius lower bound. We also recall geometric consequences of these assumptions, including small mass isoperimetric inequalities for integral currents and a quantitative Poincaré lemma.

    In \Cref{section-quantitative-geometry-of-qr-curves-under-sectional-curvature-bounds}, we investigate the quantitative geometry of quasiregular curves in our setting. Our results include homological non-triviality of non-constant quasiregular curves $M \to ( N, \omega )$ for closed $M$, modulus of continuity estimates, weak reverse Hölder inequalities, and a new non-linear Caccioppoli type inequality.

    In \Cref{section-equicontinuity-and-sectional-curvature-bounds}, we analyse families of quasiregular curves. In particular, we give equivalent conditions for a family of quasiregular curves to be equicontinuous or normal.

    In \Cref{section-miniowitz-zalcman-rescaling-principle-for-qr-curves}, we prove a version of the Miniowitz--Zalcman rescaling principle using equicontinuity and normality, respectively. Moreover, for the proof of the principle, we propose a notion of Zalcman sequences and study basic properties of such sequences.
    
    In the sections mentioned above, we also treat non-compact targets and formulate corresponding versions of our main results for equibounded families of quasiregular curves.
    
    In \Cref{section-smith-maps-and-hyperbolicity}, we establish the equivalence of the various notions of hyperbolicity for conformal curves for closed targets. In fact, we connect the normality of conformal curves $\mathcal{F}_1(\omega)$ to the uniform boundedness of the derivatives of $\mathcal{F}_1(\omega)$ in a neighbourhood of the origin, similar to Marty's theorem from complex analysis.
    
    In \Cref{section-examples}, we refine the known connections between the quasiregular ellipticity for mappings and for curves by exhibiting a large collection of quasiregular curves factoring e.g. through a Special Lagrangian submanifold of a Calabi--Yau manifold or an associative submanifold of a $G_2$ manifold.

    \subsection{Acknowledgments}
    The first author was supported by Swiss National Science Foundation grant 212867. The second author was supported by the Research Council of Finland, project number 332671 and Center of Excellence FiRST.

    The authors thank Pekka Pankka and Susanna Heikkilä for discussions regarding quasiregular ellipticity and the Miniowitz--Zalcman rescaling principle. We also thank Jesse Madnick for discussions regarding \cite{broder-iliashenko-madnick-2025-hyperbolicity-and-schwarz-lemmas-in-calibrated-geometry}.

\section{Preliminaries}\label{section-preliminaries}

    \subsection{Metric notation}
    Our notation is fairly standard. Open balls and closed balls in \(\R^n\) are denoted by $\mathbb{B}(z,r)$ and $\overline{\mathbb{B}}(z,r)$ respectively. For origin centred balls, we write \(\mathbb{B}(r) = \mathbb{B}(0,r)\), and denote the unit ball \(\mathbb{B}(1)\) by \(\mathbb{B}\). Balls in other spaces are denoted similarly but with a subscript, e.g. \(\mathbb{B}_N(x,r) \subset N\). For a set $A$ in a metric space $X$, the \emph{diameter} is $\diam(A) = \sup\{ d(x,y) \colon x, y \in A \}$.

    A map $f \colon X \to Y$ is \emph{Lipschitz} if
    \begin{align*}
        \LIP(f) = \sup_{ x \neq y } \frac{ d(f(x),f(y)) }{ d(x,y) } < \infty
    \end{align*}
    and $\LIP(f)$ is called the \emph{Lipschitz constant} of $f$. If $L \geq \LIP(f)$, we say that $f$ is $L$-Lipschitz.

    Similarly, a map $f \colon X \to Y$ is \emph{$\beta$-Hölder} if
    \begin{align*}
        \sup_{ x \neq y } \frac{ d(f(x),f(y)) }{ d^{\beta}(x,y) } < \infty.
    \end{align*}

    \subsection{Manifolds, comass norm, and calibrations}
    Our manifolds are smooth, equipped with a Riemannian metric, and we denote the induced distance function by \(d\).
    Submanifolds are equipped with the Riemannian metric induced by the ambient space, and \(M \subset N\) denotes that \(M\) is a submanifold of \(N\). When \(M\) is isometrically embedded in \(N\), we write $M \xhookrightarrow{} N$. If $M$ is oriented, the image of the embedding is equipped with the induced orientation.

    Given a manifold $N$ and a differential $n$-form $\omega$ on \(N\), the \emph{pointwise comass} of $\omega$ is the function $\norm{\omega} \colon N \to \R$ defined by
    \begin{align*}
        \| \omega \|(x)
        =
        \sup\left\{
            \omega( v_1 \wedge \dots \wedge v_n )
            \colon
            \text{$\{v_1,\dots,v_n\}$ is an orthonormal frame on $T_x N$}
        \right\}.
    \end{align*}
    We say that $\omega$ is a \emph{calibration} if $\|\omega\|(x) \leq 1$ for every $x \in N$ and $\omega$ is closed. We say that $\omega$ is \emph{strongly non-vanishing} if there exists $c > 0$ such that $\|\omega\|(x) \geq c$ for every $x \in N$. In case $2 \leq n \leq m$, the pair $(N,\omega)$ is an \emph{$n$-calibrated $m$-manifold} if $N$ is $m$-dimensional and $\omega$ is a strongly non-vanishing calibration of degree $n$. 

    \subsection{Lebesgue spaces, Sobolev maps, the pullback, and the pushforward}
    Let $M$ be a manifold equipped with the (unoriented) Riemannian volume measure. This measure coincides with the $n$-dimensional Hausdorff measure on \(M\) when \(n = \dim M\). The choice of a measure immediately gives a notion of Lebesgue spaces $L^{q}(M)$ and local Lebesgue spaces $L^{q}_{\loc}(M)$. Here $L^{q}_{\loc}(M)$ is defined similarly to $L^{q}(M)$ but $q$-integrability is only required to hold in an open neighbourhood of each point. Similarly, we define $L^{q}_{\loc}( \Omega^{k}(M) )$ for differential forms of degree $k$. For a locally finite measure $\nu$ on $M$ and a $\nu$-measurable set $E \subset M$ with $0 < \nu(E) < \infty$, we denote
    \begin{align*}
        \aint{ E } f \,d\nu \coloneqq \frac{ 1 }{ \nu(E) } \int_{E} f \,d\nu
    \end{align*}
    for every $\nu$-measurable function $f \colon E \to \R$.

    Next, we define Sobolev spaces. Let \(M\) and \(N\) be (Riemannian) manifolds, assume \(N\) is complete, and consider a proper Nash embedding $N \xhookrightarrow{} \R^{\ell}$. For each $p \in [1,\infty)$, the local Sobolev space \(W^{1,p}_{\loc} (M,N)\) is given by    
    \begin{equation*}
        W^{1,p}_{\loc}( M, N ) = \{ f \in W^{1,p}_{\loc}( M, \R^{\ell} ) \mid f(x) \in N \text{ for almost every $x \in M$}  \}.
    \end{equation*}
    This definition is equivalent to several intrinsic ones such as Reshetnyak's approach based on post-composition by Lipschitz maps \cite{reshetnyak-2006-on-the-theory-of-sobolev-classes-of-functions-with-values-in-a-metric-space}; see \cite{convent-van-schaftingen-2016-intrinsic-co-local-weak-derivatives-and-sobolev-spaces-between-manifolds} for further discussion. We only use the global Sobolev space $W^{1,p}(M,N)$ when $M$ has finite volume.

    We consider two notions of energies for Sobolev maps based on the weak differential $Df \colon TM \to TN$ of $f \in W^{1,p}_{\loc}( M, N )$. The definitions of these energies are based on the \emph{operator norm} $\|Df\|$ and the \emph{Hilbert--Schmidt norm} $\|Df\|_{HS}$ respectively. We recall that if $M$ is $n$-dimensional, then $n^{-1/2}\|Df\|_{HS} \leq \|Df\| \leq \|Df\|_{HS}$ almost everywhere. For each precompact open subset $U \subset M$, the associated energies are
    \begin{align*}
        E_p( f|_{U} ) \coloneqq \int_{ U } \|Df\|^p \,dz
        \quad\text{and}\quad
        \mathcal{E}_{p}( f|_{U} ) \coloneqq \int_{U} \|Df\|_{HS}^{p} \,dz.
    \end{align*}
    For the majority of this manuscript, we use the energy \(E = E_n\) for the case \(p=n\) based on the operator norm.

    We denote by $\Omega^{k}_{b}(N)$ the collection of differential $k$-forms whose comass is bounded. Given the weak differential \(Df\) of a map \(f \in W^{1,p}_{\loc}(M,N)\), we define the \emph{pullback} of a smooth and bounded differential form $\eta \in \Omega^{k}_b( N )$ by
    \begin{align*}
        ( f^{*}\eta )( v_1 \wedge \dots \wedge v_k ) = \eta( (Df)(v_1) \wedge \cdots \wedge (Df)(v_k) )
    \end{align*}
    for almost every $v_1 \wedge \dots \wedge v_k \in \bigwedge^k TM$.
    By definition of the comass, clearly $f^{*}\eta \in L^{p/k}_{\loc}( \Omega^{k}(M) )$.

    We consider two variants of Federer--Fleming integral currents: the finite mass integral currents $\mathbf{I}_{*}(N)$ on complete  manifolds and the compactly-supported ones $\mathbf{I}_{*,c}(M)$ on all  manifolds; here $*$ refers to the dimension of the current. We refer to \cite{federer-1969-geometric-measure-theory} for the details on the definitions and only recall the statements that we require.
    
    First, by definition, an integer-rectifiable $k$-current $T$ acts on \emph{bounded} differential $k$-forms $\eta \in \Omega^{k}_b(N)$ in the following manner: there exists a $k$-rectifiable set $E \subset N$ of finite $k$-dimensional Hausdorff measure $\mathcal{H}^{k}$, a measurable multiplicity function $\theta \colon E \to \N = \{1,2,\dots\}$, and a measurable section $v \colon E \to \bigwedge^{k} TN$ of simple unit $k$-vectors satisfying
    \begin{align*}
        M( T ) = \int_{ E } \theta \,d\mathcal{H}^{k} < \infty
        \quad\text{and}\quad
        T( \eta ) = \int_{ E } \eta( v ) \theta \,d\mathcal{H}^{k}
        \quad\text{for every $\eta \in \Omega^{k}_b(N)$}.
    \end{align*}
    We emphasize that the $k$-dimensional Hausdorff measure is normalized so that $\mathcal{H}^k$ coincides with the Lebesgue measure on $\R^k$. The \emph{mass measure} of $T$ is
    \begin{align*}
        \|T\|( B ) = \int_{ B \cap E } \theta \,d\mathcal{H}^k
        \quad\text{for $\mathcal{H}^k$-measurable sets $B \subset N$,}
    \end{align*}
    and $M(T) = \|T\|( N )$ is the \emph{mass} of $T$.

    We note that $T \in \mathbf{I}_{k}(N)$ if $T$ and $\partial T$ are integer-rectifiable currents of dimension $k$ and $k-1$, respectively. Recall that the \emph{boundary} $\partial T$ is defined using a built-in Stokes theorem:
    \begin{align*}
        \partial T( \eta ) = T( d\eta )
        \quad\text{for every $\eta \in \Omega^{k-1}_b(N)$ with $d\eta \in \Omega^{k}_b(N)$},
    \end{align*}
    when $k \geq 1$. Here $\partial T = 0$ if $k = 0$. Clearly $\partial^{2} = 0$. The definition of $\mathbf{I}_{k,c}(N)$ is otherwise the same but we require that $E$ is \emph{precompact}, i.e. the closure $\overline{E} \subset N$ is compact. As $\partial \mathbf{I}_{k,c}(N) \subset \mathbf{I}_{k-1,c}(N)$, we obtain chain complexes $( \mathbf{I}_{*,c}(N), \partial )$ and $( \mathbf{I}_{*}(N), \partial )$.
    
    We note that if $U \subset N$ is a smooth (or Lipschitz) precompact domain on an oriented $n$-dimensional manifold $N$, then equipping $U$ with the orientation of $N$ induces an element $[U] \in \mathbf{I}_{n,c}(N)$ with its \emph{boundary} $\partial [U]$ being $[\partial U] \in \mathbf{I}_{n-1,c}( \partial U ) \subset \mathbf{I}_{n-1,c}( N )$. Here $\partial U$ is equipped with the induced orientation from $U$.

    It is well-known that Federer--Fleming currents can be \emph{pushed forward} by a $\mathcal{C}^1$-regular mapping and, by approximation, by Lipschitz maps. This was recently extended by the first author for Sobolev maps \cite{ikonen-2026-pushforward-of-currents-under-sobolev-maps} using the compatible theory of Ambrosio--Kirchheim integral currents \cite{ambrosio-kirchheim-2000-currents-in-metric-spaces}. We only use these results for $f \in W^{1,n}_{\loc}( M, N )$ when $N$ is a complete (Riemannian) manifold and $M$ is an $n$- or $(n-1)$-dimensional (Riemannian) manifold for $n \geq 2$, and thus we recall a special case of the main results of \cite{ikonen-2026-pushforward-of-currents-under-sobolev-maps}. To set up the statement, for an $n$-integrable Borel function $\rho \colon M \to [0,\infty]$, consider the chain complex $( \mathbf{I}_{*,c}(\rho), \partial )$ defined by
    \begin{align*}
        \mathbf{I}_{*,c}(\rho) 
        &= 
        \bigcup_{ k \geq 1 }\left\{ T \in \mathbf{I}_{k,c}(M) \colon \int_{M} \rho^{k} \,d\|T\| + \int_{M} \rho^{k-1} \,d\|\partial T\| < \infty \right\}
        \\
        &\cup
        \left\{ T \in \mathbf{I}_{0,c}(M) \colon \int_{M} \rho^{0} \,d\|T\|  < \infty \right\}
    \end{align*}
    where the conventions $0^{0} = 0$ and $\infty^{0} = \infty$ are used. Now, the main results of \cite{ikonen-2026-pushforward-of-currents-under-sobolev-maps} prove that, for each $f \in W^{1,n}_{\loc}(M, N )$ and representatives of $f$ and $Df$, there exist a $\rho$ as above and an additive chain map $f_{*} \colon \mathbf{I}_{*,c}(\rho) \to \mathbf{I}_{*}(N)$ with the following properties.
    \begin{enumerate}
        \item the pushforward is compatible with the pullback: if $\eta \in \Omega^{k}_{b}( N )$ and $T \in \mathbf{I}_{k,c}( \rho )$, then
        \begin{align*}
            ( f_* T )( \eta ) = T( f^{*}\eta ) \coloneqq \int_{ E } (f^{*}\eta)(v) \theta \,d\mathcal{H}^{k}
        \end{align*}
        using the representation of $T$ as above.
        \item the boundary and the pushforward commute: if $\eta \in \Omega^{k-1}_{b}(N)$ and $d\eta \in \Omega^{k}_{b}(N)$, then 
        \begin{align*}
            ( f_{*} \partial T )(\eta) = ( f_* T )( d\eta )
        \end{align*}
        for every $T \in \mathbf{I}_{k,c}(\rho)$.
        \item the mass-energy estimate holds: if $T \in \mathbf{I}_{k,c}( \rho )$, then
        \begin{align*}
            M( f_{*}T ) \leq \int_{ M } \|Df\|^{k}\,d\|T\|.
        \end{align*}
        \item if $\eta \in \Omega^{n}_{b}( N )$ and $T \in \mathbf{I}_{n,c}( \rho )$, then
        \begin{align*}
            | (f_*T)( \eta ) | \leq \int_{ M } \|f^{*}\eta\| \,d\|T\| \leq n^{-n/2} \int_{ M } ( \|\eta\| \circ f )\|Df\|^{n}_{HS} \,d\|T\|.
        \end{align*}
    \end{enumerate}
    We give more precise references for these statements. The compatibility with the pullback and the boundary property follow from \cite[Lemmas 5.1 and 5.3]{ikonen-2026-pushforward-of-currents-under-sobolev-maps}, and the main theorem in \cite{ikonen-2026-pushforward-of-currents-under-sobolev-maps} contains the mass-energy estimate in the general case. Regarding the last claim: if $M$ is $(n-1)$-dimensional, then $T = 0$ for $T \in \mathbf{I}_{n,c}( \rho )$ so the conclusion is tautological, and if $M$ is $n$-dimensional, then the mass measure $\|T\|$ is absolutely continuous with respect to the Riemannian volume measure; in this case the first inequality in (4) follows from the compatibility with the pullback and the second inequality from Hadamard's inequality $\|f^{*}\eta\| \leq n^{-n/2}( \|\eta\| \circ f ) \|Df\|^{n}_{HS}$; see e.g. \cite[Lemma 2.1]{ikonen-pankka-2024-liouvilles-theorem-in-calibrated-geometries}.

    The results above are abstract. However, we only apply them in concrete situations. For instance, if $M = \R \times \Sigma$ for a closed oriented manifold $\Sigma$, we have the following elementary consequence of Fubini's theorem: if $\rho \colon M \to [0,\infty]$ is (locally) $n$-integrable, then
    \begin{align*}
        (s,t) \mapsto  \int_{ (s,t) } \int_{ \Sigma } \rho^{n}(r,v) \,d\mathcal{H}^{n-1}(v) \,dr
    \end{align*}
    is (locally) absolutely continuous. In particular, by Hölder's inequality,
    \begin{align*}
        \int_{ \Sigma } \rho^{n-1}(r,v) \,d\mathcal{H}^{n-1}(v)
        \leq
        \left( \mathcal{H}^{n-1}( \Sigma) \right)^{1/n}
        \left(
            \int_{ \Sigma } \rho^{n}(r,v) \,d\mathcal{H}^{n-1}(v)
        \right)^{ \frac{n-1}{n} }
        <
        \infty
    \end{align*}
    for almost every $r \in \R$. Thus, for almost every $(s,t)$, the integral current $[ (s,t) \times \Sigma ]$ belongs to $\mathbf{I}_{n,c}( \rho )$.
    
    Now, if $M$ is an arbitrary oriented $n$-manifold, using ideas similar to those above, given a precompact smooth open set $U \subset M$, we may use a collar neighbourhood of $\partial U$ to approximate $U$ from the inside by smooth domains $( U_j )_j$ such that $[ U_j ] \in \mathbf{I}_{*,c}( \rho )$. Notice that in this case $\int_{ U_j } \rho^{n} \,dz < \infty$ is automatic, so $[U_j] \in \mathbf{I}_{*,c}( \rho )$ only depends on whether $\int_{ \partial U_j } \rho^{n-1} \,d\mathcal{H}^{n-1} < \infty$.

    \subsection{Aspherical calibrations}
    Let $N$ be a complete  manifold. A closed $n$-form \(\omega\) on $N$ is \emph{aspherical} if $\int_{ \mathbb{S}^n } f^{*}\omega = 0$ for every smooth map $f \colon \S^n \to N$, or equivalently $\omega$ annihilates the image of the Hurewicz homomorphism $\pi_n( N ) \rightarrow H_n( N )$. It readily follows that if $f$ is a continuous map in the Sobolev space $W^{1,n}_{\loc}( \mathbb{S}^n, N )$, then $\int_{ \mathbb{S}^{n} } f^{*}\omega = 0$ as well. 
    
    This can be proved using standard arguments: consider a proper Nash embedding $N \xhookrightarrow{} \R^\ell$ and approximate $f$ using convolution and the nearest point projection to $N$. More precisely, we recall the elementary observation that the nearest-point projection of $N$ is well-defined in an open set $U \subset \R^\ell$ containing $N$ and the distance from the compact set $f( \mathbb{S}^n ) \subset N \subset \R^\ell$ to the complement $\R^\ell \setminus U$ is positive. 
    
    Pankka and the second author discussed aspherical calibrations in the context of the bubbling phenomenon \cite{pankka-pim-2025-nodal-resolution-of-quasiregular-curves-via-bubble-trees}. Indeed, the bubbling of quasiregular curves can only occur for calibrations which are \emph{not} aspherical. For example, all exact forms, or, more generally, all forms in the \emph{K\"unneth ideal}, defined in \cite{heikkila-2024-quasiregular-curves-and-cohomology}, are aspherical forms.

\section{Sectional curvature bounds and quantitative geometry}\label{section-sectional-curvature-bounds-quantitative-geometry}

    In this section, we consider manifolds with a lower bound on injectivity radius and an upper bound on sectional curvature. In particular, for every such manifold, there exists a parameter $\delta > 0$ such that the injectivity radius at every point is bounded from below by $2\delta$ and the sectional curvature is bounded from above by $(\pi/(2\delta))^2$ --- notice that a sphere of radius $(2\delta)/\pi$ has precisely these properties. Moreover, every closed manifold satisfies these properties for a sufficiently small $\delta$. This has the advantage of quantifying certain geometric results of interest including the small mass isoperimetric inequalities and a quantitative Poincaré lemma for closed forms on small balls; see the following two subsections.

    For us, the key point of the above geometric assumptions is the following well-known quantitative homotopy lemma. We write $\gamma_{x_0,x} \colon [0,1] \to N$ for a constant-speed geodesic joining $x_0$ to $x$.
    \begin{lemma}\label{lemma-lipschitz-homotopy-bound}
        Let \(N\) be a manifold with an injectivity radius lower bound $2\delta$ and a sectional curvature upper bound $( \pi/(2\delta) )^2$. Let $r \in (0,\delta)$ and $x_0 \in N$. Then the radial homotopy $h \colon [0,1] \times \mathbb{B}_N(x_0,r) \to \mathbb{B}_N( x_0, r)$, where $(t,x) \mapsto \gamma_{x_0,x}(t)$, satisfies the Lipschitz estimate
        \begin{equation*}
            d( h(s,x), h(t,y) ) \leq 2\left( r|t-s| + d(x,y) \right)
        \end{equation*}
        for every $(s,x), (t,y) \in [0,1] \times \mathbb{B}_N(x_0,r)$.
    \end{lemma}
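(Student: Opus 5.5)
The plan is to write $h(t,x) = \exp_{x_0}\bigl(t\,\exp_{x_0}^{-1}(x)\bigr)$ and split the estimate with the triangle inequality:
\begin{align*}
    d( h(s,x), h(t,y) ) \leq d( h(s,x), h(t,x) ) + d( h(t,x), h(t,y) ).
\end{align*}
This is legitimate because $r < \delta < 2\delta \leq \mathrm{inj}(x_0)$, so $\exp_{x_0}$ is a diffeomorphism from the ball of radius $r$ in $T_{x_0}N$ onto $\mathbb{B}_N(x_0,r)$. The first term is elementary. The curve $t \mapsto h(t,x)$ is the minimizing geodesic $\gamma_{x_0,x}$, which has constant speed $d(x_0,x) < r$. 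Hence $d(h(s,x),h(t,x)) \leq |t-s|\, d(x_0,x) \leq r|t-s|$. It therefore suffices to show that each map $h_t = h(t,\cdot)$ is $2$-Lipschitz on $\mathbb{B}_N(x_0,r)$. Together these give the bound with constant $2$; in fact we expect to get the better constant $\pi/2$ in the second term.

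For the Lipschitz bound on $h_t$, first bound the differential $Dh_t$ at a point $x = \exp_{x_0}(\rho u)$ with $|u| = 1$ and $\rho < r$. Decompose a tangent vector at $x$ into its radial part and its part orthogonal to $\gamma_{x_0,x}$.
\begin{itemize}
    \item By the Gauss lemma, $Dh_t$ scales the radial part by $t$.
    \item The orthogonal part is $J(\rho)$ for a normal Jacobi field $J$ along the unit-speed geodesic from $x_0$ with $J(0) = 0$, and its image under $Dh_t$ is $J(t\rho)$.
\end{itemize}
Set $\kappa = (\pi/(2\delta))^2$ and $\mathrm{sn}_\kappa(\tau) = \sin(\sqrt{\kappa}\tau)/\sqrt{\kappa}$. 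We have $\sqrt{\kappa}\rho < \pi/2$, and there are no conjugate points before the injectivity radius. So the Rauch comparison theorem applies and gives that $\tau \mapsto |J(\tau)|/\mathrm{sn}_\kappa(\tau)$ is nondecreasing on $(0,\rho]$. Consequently
\begin{align*}
    \frac{|J(t\rho)|}{|J(\rho)|} \leq \frac{\mathrm{sn}_\kappa(t\rho)}{\mathrm{sn}_\kappa(\rho)} \leq \frac{t\rho}{(2/\pi)\rho} = \frac{\pi}{2}\, t ,
\end{align*}
where we used $\mathrm{sn}_\kappa(\tau) \leq \tau$ and the concavity bound $\sin a \geq (2/\pi)a$ for $a \in [0,\pi/2]$. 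The two components remain orthogonal after applying $Dh_t$, so $\|Dh_t\| \leq \max\{t, \pi t/2\} \leq \pi/2 < 2$ on the ball.

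To pass from the differential bound to a Lipschitz bound, we need the minimizing geodesic between $x, y \in \mathbb{B}_N(x_0,r)$ to stay inside the ball. For this we use the standard convexity-radius estimate: it is at least $\min\{\tfrac12 \mathrm{inj}, \pi/(2\sqrt{\kappa})\} = \delta$. Hence balls of radius $r < \delta$ are strongly convex. Integrating $\|Dh_t\|$ along that geodesic gives $d(h_t(x), h_t(y)) \leq (\pi/2)\, d(x,y)$. Note also that $h_t$ maps the ball into itself, since $d(x_0,h_t(x)) = t\,d(x_0,x)$.

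The main obstacle is applying Rauch comparison and the convexity-radius bound cleanly with the normalization chosen here. One has to check that $\sqrt{\kappa}\,r < \pi/2$ keeps us strictly inside the range where comparison with the model sphere is valid and where the comparison function is monotone. It is also worth checking the degenerate point $x = x_0$, where $Dh_t = t\,\mathrm{id}$. Everything else is the triangle-inequality bookkeeping above.
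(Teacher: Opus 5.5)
Your proposal is correct and is essentially the argument the paper has in mind. The paper only says the lemma follows from standard Jacobi field estimates, deferring details to the first author's removability paper (Section 4). Your three steps supply exactly those details:
\begin{itemize}
\item the radial bound $d(h(s,x),h(t,x)) \le r|t-s|$;
\item the Rauch-comparison bound $\|Dh_t\| \le \pi t/2$;
\item integration of that bound along minimizing geodesics inside the strongly convex ball $\mathbb{B}_N(x_0,r)$, which is convex because the convexity radius is at least $\min\{\tfrac12\mathrm{inj},\pi/(2\sqrt{\kappa})\} = \delta$.
\end{itemize}
This even gives the slightly better constant $\pi/2$ in the spatial term.
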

    This follows from standard Jacobi field estimates, see \cite[Section 4]{ikonen-2025-quasiregular-curves-removability-of-singularities} for further discussion.

    \subsection{Small mass isoperimetric inequalities}\label{section-small-mass-isoperimetric-inequalities}
    It is well-known that bounds on injectivity radius and sectional curvature imply that $N$ satisfies small mass isoperimetric inequalities for integral currents \cite{wenger-2007-flat-convergence-for-integral-currents-in-metric-spaces}. We formulate this precisely: we say that a complete manifold $N$ satisfies a \emph{small mass (Euclidean) isoperimetric inequality of dimension $k$, mass $M$, and constant $A$} if 
    \[
        T \in \Set{ S \in  \mathbf{I}_{k}( N ) \mid \partial S = 0\ \text{and}\ M(S) \leq M} \implies \mathrm{FillVol}(T) \leq A ( M(T) )^{ \frac{k+1}{k} }
    \]
    where
    \begin{align*}
        \mathrm{FillVol}(T) = \inf\{ M(S) \mid S \in \partial^{-1}( T ) \}
    \end{align*}
    and $\partial^{-1}( T) = \{ S \in \mathbf{I}_{k+1}(N) \mid \partial S = T \}$.
    The constants depend only on the dimension being considered and $\delta$. More precisely, for a manifold $N$ with injectivity radius at least $2\delta$ and sectional curvature upper bound $(\pi/(2\delta))^2$, the small mass isoperimetric inequality has a mass bound $M = M(\delta,n-1)$ and a constant $A = A(\delta,n-1)$ for dimension $n-1$ and a mass bound $M' = M'(\delta,n)$ and a constant $A' = A'(\delta,n)$ for dimension $n$. It is easy to see that $M(\delta,n-1) < n\omega_{n}( (2\delta)/\pi )^{n-1}$ and $M'(\delta,n) < (n+1)\omega_{n+1}( (2\delta)/\pi )^{n}$ by considering spheres of radius $(2\delta)/\pi$; here $\omega_n$ is the Lebesgue measure of the $n$-dimensional Euclidean ball.

    For the $(n-1)$-dimensional case and $\ell \geq n$, the corresponding constants in Euclidean space $\R^{\ell}$ are $M = \infty$ and 
    \begin{align*}
        A_{n-1} \coloneqq \frac{ \omega_{n} }{ \left( n \omega_n \right)^{\frac{n}{n-1}} } = \frac{ 1 }{ n^{ \frac{n}{n-1} } \omega_n^{ \frac{1}{n-1} } }.
    \end{align*}
    This sharp Euclidean isoperimetric inequality is due to Almgren \cite{almgren-1986-optimal-isoperimetric-inequalities}.
    
    We also define
    \begin{align}\label{equation-small-mass-isoperimetric-inequalities}
        E_N = \min\left\{ M^{ \frac{n}{n-1} } A, \frac{M'}{2} \right\}
    \end{align}
    and
    \begin{align}\label{eq:holder-exponent}
        \beta = \left( \frac{K}{c} \frac{ A }{ A_{n-1} } \right)^{-1} \quad\text{for $K \geq 1$},
    \end{align}
    where $c$ is the comass lower bound of $\omega$.

    The constants \eqref{equation-small-mass-isoperimetric-inequalities} and \eqref{eq:holder-exponent} play a critical role in the coming sections. The former is related to an energy bound below which quasiregular curves become quantitatively $\beta$-Hölder continuous by \cite[Theorem 6.8]{ikonen-2026-pushforward-of-currents-under-sobolev-maps}. See \Cref{theorem-modulus-of-continuity-estimates} for the special case of that theorem in our setting. In light of this, we say that \(N\) has \emph{Hölder data} \(E_N\) and \(\beta\) for $K \geq 1$.

    \subsection{Quantitative Poincaré lemma}\label{section-quantitative-poincare-lemma}

    The following lemma shows that closed forms are exact in small enough balls, with quantitative estimates for the comass of the potential.
    \begin{lemma}\label{lemma-existence-of-potentials}
     Let \(N\) be a manifold with an injectivity radius lower bound $2\delta$ and a sectional curvature upper bound $( \pi/(2\delta) )^2$. Let $\omega$ be a closed smooth $n$-form on $\mathbb{B}_N( x_0, R ) \subset N$ and let $0 < r \leq \min\{R,\delta\}$. Then $\omega$ has a potential $\tau$ on $\mathbb{B}_N( x_0, r )$ whose comass is bounded from above by $2^{n-1}r \sup_{ x\in \mathbb{B}_N(x_0,r) } \|\omega\|(x)$.
    \end{lemma}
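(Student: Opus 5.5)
The plan is to use the standard homotopy (cone) construction from the proof of the Poincaré lemma, with the radial homotopy $h \colon [0,1] \times \mathbb{B}_N(x_0,r) \to \mathbb{B}_N(x_0,r)$ of \Cref{lemma-lipschitz-homotopy-bound}. Since $r \leq \min\{R,\delta\} < $ the injectivity radius, geodesics from $x_0$ are unique and minimizing, so $h$ is smooth on $(0,1] \times \mathbb{B}_N(x_0,r)$ and $h$ takes values in $\mathbb{B}_N(x_0,r) \subset \mathbb{B}_N(x_0,R)$. Moreover $h_1 = \mathrm{id}$ and $h_0 \equiv x_0$ is constant. Define
\begin{align*}
    \tau \coloneqq \int_0^1 \iota_{\partial_t} (h^{*}\omega)\,dt ,
\end{align*}
that is, $\tau_x(v_1,\dots,v_{n-1}) = \int_0^1 \omega_{h(t,x)}\bigl(\partial_t h(t,x), D_xh_t(v_1), \dots, D_xh_t(v_{n-1})\bigr)\,dt$.

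The first step is to verify $d\tau = \omega$. This is the Cartan homotopy formula: $h_1^{*}\omega - h_0^{*}\omega = d\tau + \int_0^1 \iota_{\partial_t} h^{*}(d\omega)\,dt$. The last term vanishes because $\omega$ is closed, and $h_0^{*}\omega = 0$ because $h_0$ is constant and $n \geq 1$. Hence $d\tau = \omega$.

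A small technical point is smoothness near $t=0$. In normal coordinates, $h(t,x) = \exp_{x_0}(t\exp_{x_0}^{-1}x)$, which is smooth on $[0,1]\times \mathbb{B}_N(x_0,r)$, so there is no singularity and $\tau$ is smooth.

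The second step is the comass bound, which is where \Cref{lemma-lipschitz-homotopy-bound} enters. For fixed $x$, applying the lemma with $x = y$ gives $|\partial_t h(t,x)| \leq 2r$. Applying it with $s = t$ gives $\|D_xh_t\| \leq 2$ for every $t$. For orthonormal $v_1,\dots,v_{n-1}$, the integrand is $\omega$ evaluated on $n$ vectors whose lengths are at most $2r, 2, \dots, 2$. By the definition of comass (extended multilinearly, e.g. via Gram--Schmidt), its absolute value is at most $\|\omega\|(h(t,x))\cdot 2r \cdot 2^{n-1}$. This gives $2^n r\sup\|\omega\|$, which is off by a factor $2$ from the claimed bound.

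The main obstacle is therefore recovering this factor of $2$. The plan is to sharpen the estimate on $\partial_t h$: the curve $t\mapsto h(t,x)$ is the geodesic $\gamma_{x_0,x}$, so $|\partial_t h(t,x)| = d(x_0,x) < r$ exactly, rather than $2r$. Only the spatial derivative $\|D_xh_t\| \leq 2$ then needs \Cref{lemma-lipschitz-homotopy-bound}, which in turn rests on the Jacobi field comparison under the curvature bound $(\pi/(2\delta))^2$ with radius below $\delta$. With this, the integrand is bounded by $r\,2^{n-1}\sup_{\mathbb{B}_N(x_0,r)}\|\omega\|$, and integrating over $t\in[0,1]$ gives exactly the stated bound.
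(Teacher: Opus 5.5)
Your proposal is correct and follows essentially the same route as the paper. The paper's potential is $\tau=\int_0^1 h_t^{*}(\iota_{X_t}\omega)\,dt$, obtained from Cartan's formula for the radial homotopy; this is the same form as your $\int_0^1\iota_{\partial_t}(h^{*}\omega)\,dt$, and the paper gets the bound $2^{n-1}r\sup\|\omega\|$ exactly as your final step does, from $|X_t(x)|\le r$ (the geodesic speed) and the $2$-Lipschitz bound on $h_t$ from \Cref{lemma-lipschitz-homotopy-bound}.
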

    \begin{proof}
        We let $h \colon [0,1] \times \mathbb{B}_{N}( x_0, r ) \to \mathbb{B}_{N}( x_0, r )$ be the radial homotopy to $x_0$. For every $t \in [0,1]$, we define $h_t(x) \coloneqq h(t,x)$, and let $X_t(x) \in T_{ h_t(x) } N$ denote the tangential derivative of $s \mapsto h_s(x)$ at $s = t$ for each $(t,x) \in (0,1) \times \mathbb{B}_{N}(x_0, r)$. The smoothness (even at $t = 0$) of $(t,x) \mapsto X_t$ readily follows from normal coordinates. By Cartan's magic formula and the definition of the Lie derivative, we have
        \begin{align*}
            h_t^{*} d( \iota_{ X_t }\omega ) = h_{t}^{*}(\mathcal{L}_{ X_t }\omega) = \frac{ d }{ dt } h_t^{*}\omega,
        \end{align*}
        where $\mathcal{L}_{ X_t }$ and $\iota_{ X_t }$ denote the Lie derivative and interior product along $X_t$, respectively. Thus, denoting
        \begin{align*}
            \tau
            \coloneqq
            \int_{ 0 }^{ 1 } h_{t}^{*}\left( \iota_{X_t}\omega \right) \,dt,
        \end{align*}
        we obtain a potential for $\omega$. Recalling $|X_t(x)| \leq r$ by definition, the comass estimate for $\tau$ follows from \Cref{lemma-lipschitz-homotopy-bound} and the comass upper bound for $\omega$.
    \end{proof}

\section{Quasiregular curves and sectional curvature}\label{section-quantitative-geometry-of-qr-curves-under-sectional-curvature-bounds}

    \subsection{Homological non-triviality of quasiregular curves}\label{section-homological-non-triviality}
    The following result describes two important properties of quasiregular curves defined on a closed manifold.
    \begin{lemma}\label{lemma-homological-positive-current}
    Let \(M\) be a closed, connected and oriented $n$-manifold, and let $(N,\omega)$ be an $n$-calibrated $m$-manifold. Then each quasiregular curve $F \colon M \to ( N, \omega )$ satisfies the following:
    \begin{enumerate}
        \item the integral homology class of $F_{*}[M]$ in $H_{n}( N, \mathbb{Z} )$ is non-trivial if and only if $F$ is non-constant.
        \item the current $F_{*}[M]$ is a positive $\omega$-current.
    \end{enumerate}
    \end{lemma}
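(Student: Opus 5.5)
The plan is to work with the pushforward current $F_*[M]$ supplied by \cite{ikonen-2026-pushforward-of-currents-under-sobolev-maps}. Since $M$ is closed, we may take $U = M$ in the construction of the chain map: $[M] \in \mathbf{I}_{n,c}(\rho)$, because $\int_M \rho^n < \infty$ and $\partial[M] = 0$. The chain map then gives an integral $n$-current $F_*[M] \in \mathbf{I}_n(N)$ with $\partial F_*[M] = F_*\partial[M] = 0$, so it is an integral cycle. Its compact support is contained in $F(M)$. By the compatibility with the pullback, $(F_*[M])(\eta) = \int_M F^*\eta$ for all bounded forms $\eta$.

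Part (2) comes first, since part (1) relies on it. We must show that $\vec{T}$ is $\omega$-calibrated, i.e. $\omega(\vec T) = \|\omega\| > 0$ a.e., or in Harvey--Lawson's sense that $T(\phi\,\omega) \geq 0$ for nonnegative functions $\phi$. For $\phi \geq 0$ smooth, the pullback formula gives
\[
  (F_*[M])(\phi\,\omega) = \int_M (\phi\circ F)\, F^*\omega ,
\]
and the distortion inequality \eqref{equation-quasiregular-curve-distortion-inequality} shows $\star F^*\omega \geq 0$ a.e., so this quantity is nonnegative. To get the stronger statement that the approximate tangent plane is calibrated, I would compare the mass measure with $F_*(\star F^*\omega\,\mathrm{vol}_M)$. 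By the mass-energy estimate, $\|F_*[M]\| \leq F_\#(\|DF\|^n\,\mathrm{vol}_M)$, where $F_\#$ is the pushforward of measures. This is in turn at most $F_\#\bigl(K c^{-1}\star F^*\omega\bigr)$. Hence $T$ has positive density against $\omega$, and mass cancellation cannot destroy positivity. I expect this to be the more delicate point if ``positive $\omega$-current'' is meant in the strong sense. In that case I would localize with $\phi$ and apply the area formula for Sobolev maps satisfying Lusin's condition (N).

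For part (1), suppose $F$ is non-constant. Since $\omega$ is closed, $\int_M F^*\omega = T(\omega)$ depends only on the homology class $[T]$. If $[T] = 0$ in $H_n(N,\mathbb{Z})$, i.e. $T = \partial S$ for some integral current $S$, then $T(\omega) = S(d\omega) = 0$. On the other hand $\int_M \star F^*\omega \geq K^{-1}\int_M (\|\omega\|\circ F)\|DF\|^n \geq K^{-1}c\,E(F) > 0$. Here $E(F) > 0$ because a $W^{1,n}$ map with $DF = 0$ a.e. on a connected manifold is constant. This contradiction shows $[T] \neq 0$. I must also identify the homology of integral currents with the singular homology $H_n(N,\mathbb{Z})$. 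The Federer--Fleming isomorphism handles this; alternatively, approximate $F$ by smooth maps homotopic to it, as in the aspherical discussion above, so that $[F_*[M]]$ agrees with $F_*[M]$ in singular homology. This approximation step is the other technical point, and I would treat it exactly via convolution plus nearest-point projection.

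Conversely, if $F$ is constant, then $T = 0$ and the class is trivial.
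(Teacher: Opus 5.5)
Your argument is essentially the paper's proof: part (1) is the same Stokes argument, $T(\omega)=S(d\omega)=0$ against $\int_M F^*\omega\geq (c/K)E(F)>0$, and part (2) is the same test of $T$ against $\phi\,\omega$ with $\phi\geq 0$, which, extended to bounded Borel $\phi$, gives exactly the paper's definition $\omega(v)\geq 0$ a.e.; your explicit identification of current homology with singular $H_n(N,\mathbb{Z})$ via smooth approximation is a step the paper leaves implicit. You should drop the ``strong sense'' $\omega(\vec T)=\|\omega\|$: it is not what is meant and is false for $K>1$ (for $z\mapsto (z,\epsilon\bar z)$ from $\mathbb{C}/\mathbb{Z}^2$ into the flat torus $\mathbb{C}/\mathbb{Z}^2\times\mathbb{C}/\epsilon\mathbb{Z}^2$ with its K\"ahler form one gets $\omega(\vec T)=(1-\epsilon^2)/(1+\epsilon^2)$), and what your measure comparison actually proves, granted a localized mass--energy bound, is $\omega(\vec T)\geq\|\omega\|/K$, which is calibration exactly when $K=1$.
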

    Recall that an integer-rectifiable $n$-current $T$ on $N$ has a representation in terms of an $n$-rectifiable set $E \subset N$, a measurable section of simple unit $n$-vectors $v \colon E \to \bigwedge^{n} TN$, and a multiplicity function $\theta \colon E \to \{1,2,\dots\}$. With this in mind, following Harvey--Lawson \cite[Definition 1.4]{harvey-lawson-1982-calibrated-geometries}, we say that $T$ is a \emph{positive $\omega$-current} if $\omega(v) \geq 0$ $\mathcal{H}^n$-almost everywhere on $E$. We are now ready to prove the result.
    \begin{proof}
        Let $K$ be the distortion constant of $F$ and let $c$ be a lower bound for the comass of $\omega$. Then
        \begin{align*}
            \frac{c}{K} E( F ) \leq \int_{M} F^{*}\omega = (F_{*}[M])( \omega )
        \end{align*}
        by the distortion inequality and the compatibility of the pullback and the pushforward. Now, regarding (1), if we had an integral $(n+1)$-current $\Sigma$ with $\partial \Sigma = F_{*}[M]$, it would follow that
        \begin{align*}
            (F_{*}[M])(\omega) = (\partial \Sigma)( \omega ) = \Sigma( d\omega ) = 0.
        \end{align*}
        Hence $F_{*}[M]$ being exact implies that $F$ has zero energy, i.e. that $F$ is constant. In the case that $F$ is constant, obviously $F_{*}[M]$ is the zero current.

        Regarding (2), by the compatibility of the pushforward and the pullback, every smooth and bounded function $\phi \colon N \to [0,\infty)$ satisfies
        \begin{align*}
            0 \leq \int_{M} ( \phi \circ F ) \|DF\|^n \,dz \leq \frac{K}{c} \int_{M} ( \phi \circ F ) F^{*}\omega = (F_{*}[M])( \phi \omega ).
        \end{align*}
        Since this holds for every smooth $\phi$, by approximation, the conclusion also holds for every bounded Borel function $\phi \colon N \to [0,\infty)$. The current being $\omega$-positive is immediate.
    \end{proof}
    Combining the above with the small mass isoperimetric inequalities for integral currents, we obtain the following version of the energy gap \cite[Theorem~4.2]{pankka-pim-2025-nodal-resolution-of-quasiregular-curves-via-bubble-trees}.
    \begin{corollary}\label{lemma-homological-non-triviality}
         Let \(M\) be a closed oriented \(n\)-manifold, and let \((N,\omega)\) be an \(n\)-calibrated \(m\)-manifold with an injectivity radius lower bound $2\delta$ and a sectional curvature upper bound $( \pi/(2\delta) )^2$. Let \(M'\) be the mass bound for the \(n\)-dimensional isoperimetric inequality of \(N\). Then \(E(F) \geq  M'\) for every non-constant quasiregular curve $F \colon M \to ( N, \omega )$.
    \end{corollary}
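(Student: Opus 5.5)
We argue by contradiction. Let $F \colon M \to (N,\omega)$ be a non-constant $K$-quasiregular curve with $E(F) < M'$, and write $T \coloneqq F_{*}[M] \in \mathbf{I}_{n}(N)$. Since $M$ is closed, $\partial [M] = 0$, so $[M] \in \mathbf{I}_{n,c}(\rho)$ for the $\rho$ associated with $F$. Because the pushforward is a chain map, $\partial T = F_{*}\partial[M] = 0$, so $T$ is an integral $n$-cycle. By the mass-energy estimate (3) applied with $k = n$,
\begin{align*}
    M(T) \leq \int_{M} \|DF\|^{n} \,d\|[M]\| = E(F) < M'.
\end{align*}

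We may therefore apply the small mass isoperimetric inequality of dimension $n$, with mass bound $M'$ and constant $A'$, which holds by the assumed bounds on injectivity radius and sectional curvature (\Cref{section-small-mass-isoperimetric-inequalities}). It gives $\mathrm{FillVol}(T) \leq A' M(T)^{(n+1)/n} < \infty$. In particular $\partial^{-1}(T) \neq \emptyset$, so there is $\Sigma \in \mathbf{I}_{n+1}(N)$ with $\partial \Sigma = T$.

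We now repeat the argument of \Cref{lemma-homological-positive-current}(1). Since $\omega$ is closed and bounded, and $d\omega = 0$ is also bounded, we get $T(\omega) = \Sigma(d\omega) = 0$. If $M$ is disconnected, we apply the argument componentwise: pick a component on which $F$ is non-constant, whose energy is still less than $M'$. On the other hand, the distortion inequality together with compatibility of pushforward and pullback gives
\begin{align*}
    \frac{c}{K} E(F) \leq \int_{M} F^{*}\omega = T(\omega) = 0,
\end{align*}
so $E(F) = 0$. Since $F$ is continuous and $W^{1,n}_{\loc}$ with $DF = 0$ almost everywhere on a connected manifold, $F$ is constant, a contradiction. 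Hence $E(F) \geq M'$.

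The main point to check is that the filling $\Sigma$ may be evaluated on $\omega$. The filling lives in $\mathbf{I}_{n+1}(N)$, which has finite mass but possibly non-compact support, and the boundary identity $\partial\Sigma(\eta) = \Sigma(d\eta)$ requires $\eta$ and $d\eta$ to be bounded. This holds because $\|\omega\| \leq 1$ and $d\omega = 0$, so no compactness of $N$ is needed. Everything else is a direct application of the listed pushforward properties.
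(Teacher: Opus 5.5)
Your proof is correct and follows the paper's argument: the mass--energy estimate gives $M(F_{*}[M]) \leq E(F) < M'$, the $n$-dimensional small mass isoperimetric inequality then provides a filling $\Sigma$, and the Stokes/distortion argument of \Cref{lemma-homological-positive-current}(1), which you write out explicitly, forces $E(F) = 0$. Your componentwise remark for disconnected $M$ only works if ``non-constant'' is read as ``non-constant on some component'', since a locally constant but non-constant map has zero energy; the paper makes the same implicit reading when it appeals to that lemma, which is stated for connected $M$.
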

    \begin{proof}
        In case $E(F) < M'$, it holds that $M( F_{*}[M] ) \leq E(F) < M'$, and thus there exists an integral $(n+1)$-current $\Sigma$ with $\partial \Sigma = F_{*}[M]$. Then $F$ being constant follows from \Cref{lemma-homological-positive-current}.
    \end{proof}

    As a consequence of the removability of singularities theorem \cite[Theorem 1.9]{ikonen-2025-quasiregular-curves-removability-of-singularities}, we obtain the following corollary for entire quasiregular curves.
    \begin{corollary}\label{corollary-homological-non-triviality}
        Let \((N,\omega)\) be an \(n\)-calibrated  \(m\)-manifold with an injectivity radius lower bound $2\delta$ and a sectional curvature upper bound $( \pi/(2\delta) )^2$. Let \(M'\) be the mass bound for the \(n\)-dimensional isoperimetric inequality of \(N\). Then \(E(F) \geq  M'\) for every non-constant quasiregular curve $F \colon \R^n \to ( N, \omega )$.
    \end{corollary}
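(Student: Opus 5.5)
The plan is to reduce to \Cref{lemma-homological-non-triviality} by compactifying the domain. Let $F \colon \R^n \to (N,\omega)$ be a non-constant $K$-quasiregular curve, and suppose for contradiction that $E(F) < M'$; in particular $E(F) < \infty$. Identify $\R^n$ with $\S^n \setminus \{\infty\}$ via stereographic projection $\sigma$. This map is conformal, so $F \circ \sigma^{-1}$ is a $K$-quasiregular curve on $\S^n \setminus \{\infty\}$. The distortion inequality \eqref{equation-quasiregular-curve-distortion-inequality} is conformally invariant in dimension $n$, since both $\|DF\|^n$ and $\star F^*\omega$ scale by the $n$th power of the conformal factor. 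For the same reason the $n$-energy is unchanged: $E(F\circ\sigma^{-1}) = E(F) < \infty$.

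Next I would invoke the removability of singularities theorem \cite[Theorem 1.9]{ikonen-2025-quasiregular-curves-removability-of-singularities}. The point is that a quasiregular curve with finite energy into a target with bounded geometry (here the injectivity radius lower bound $2\delta$ and the sectional curvature upper bound $(\pi/(2\delta))^2$) extends continuously over an isolated point to a quasiregular curve on the whole ball. Applied at $\infty$, this yields a $K$-quasiregular curve $\hat F \colon \S^n \to (N,\omega)$ extending $F\circ\sigma^{-1}$. Because a point has measure zero, $E(\hat F) = E(F) < M'$. The extension $\hat F$ is non-constant because $F$ is.

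Now \Cref{lemma-homological-non-triviality} applies with $M = \S^n$, a closed oriented $n$-manifold. It gives $E(\hat F) \geq M'$, contradicting $E(\hat F) < M'$. Hence $E(F) \geq M'$, which also covers the case $E(F) = \infty$ trivially.

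The main obstacle is the removability step: one has to check that the hypotheses of \cite[Theorem 1.9]{ikonen-2025-quasiregular-curves-removability-of-singularities} are exactly finite energy near the puncture together with the stated geometric bounds on $N$. In particular, the result should not require compactness of $N$; if it did, I would expect the small-energy Hölder estimates (energy below $E_N$ near $\infty$) to supply the needed localization to a small ball in $N$. If the cited theorem only gives a continuous extension, I would additionally verify $W^{1,n}$ regularity across the point. This should follow because a point has zero $n$-capacity, so finite energy on the punctured ball passes to the full ball, and the distortion inequality then holds almost everywhere.
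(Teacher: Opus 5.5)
Your proposal is correct and follows the paper's route. The paper obtains the corollary directly from the removability theorem \cite[Theorem 1.9]{ikonen-2025-quasiregular-curves-removability-of-singularities}: a finite-energy entire curve, transported to $\S^n\setminus\{\infty\}$ by stereographic projection, extends to a quasiregular curve $\S^n \to (N,\omega)$ with the same energy, and \Cref{lemma-homological-non-triviality} then applies with $M=\S^n$.
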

    \begin{remark}\label{remark-homological-non-triviality-asherical}
        The same argument shows that when \(\omega\) is aspherical, a Liouville-type theorem holds: \emph{an entire finite energy quasiregular curve \(\R^n \to (N,\omega)\) is constant}.
    \end{remark}    

\subsection{Modulus of continuity estimates}

    We have the following special case of \cite[Theorem 6.8]{ikonen-2026-pushforward-of-currents-under-sobolev-maps}, which can be understood as a Schwarz--Ahlfors--Pick theorem for quasiregular curves.
\begin{theorem}\label{theorem-modulus-of-continuity-estimates}
    Let \((N,\omega)\) be an \(n\)-calibrated  \(m\)-manifold with an injectivity radius lower bound $2\delta$, a sectional curvature upper bound $( \pi/(2\delta) )^2$, and H\"older data \(E_N\) and \(\beta\) for $K \geq 1$.
    
    Then there exists \(C(n,\beta) > 0\) with the following property: for any ball $\mathbb{B}(x_0,2r) \subset \R^n$ and every $K$-quasiregular curve $F \colon \mathbb{B}(x_0,2r) \to ( N, \omega )$ with $E( F|_{ \mathbb{B}(x_0,2r) } ) \leq E_N$, it holds that
    \begin{align*}
        d( F(x), F(y) ) 
        \leq 
        C(n,\beta) \left( E( F|_{ \mathbb{B}(x_0,2r) } ) \right)^{1/n} \left( \frac{ |x-y| }{ 2r } \right)^{ \beta }
    \end{align*}
    whenever $x, y \in \mathbb{B}( x_0, r )$. In particular,
    \begin{align*}
        \diam( F( \mathbb{B}(x_0,r) ) ) \leq C(n,\beta) \left( E( F|_{ \mathbb{B}(x_0,2r) } ) \right)^{1/n}.
    \end{align*}
\end{theorem}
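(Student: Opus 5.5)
We prove the statement through a Morrey-type decay estimate for the energy on small balls, obtained by filling the image of spheres with integral currents. Fix $z \in \mathbb{B}(x_0,r)$ and write $\phi(t) = E(F|_{\mathbb{B}(z,t)})$ for $0 < t \leq r$. Note that $\mathbb{B}(z,r) \subset \mathbb{B}(x_0,2r)$. The goal is the differential inequality
\begin{align*}
    \phi(t) \leq \frac{1}{n\beta}\, t\, \phi'(t) \quad\text{for a.e. } t \in (0,r).
\end{align*}
Integrating it gives $\phi(t) \leq (t/r)^{n\beta} \phi(r) \leq (t/r)^{n\beta} E_N$, with $E_N$ replaced by the energy $E(F|_{\mathbb{B}(x_0,2r)})$ on the right. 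A Morrey/Campanato-type argument for continuous $W^{1,n}$ maps, or the oscillation-on-spheres estimate below, then converts this into the $\beta$-Hölder bound with the normalization $|x-y|/(2r)$.

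\textbf{Step 1 (filling the sphere image).} For a.e. $t$, the current $[\partial\mathbb{B}(z,t)]$ lies in $\mathbf{I}_{*,c}(\rho)$ by the Fubini remark in the preliminaries. The pushforward $T_t = F_*[\partial \mathbb{B}(z,t)]$ is then a cycle with
\begin{align*}
    M(T_t) \leq \int_{\partial\mathbb{B}(z,t)} \|DF\|^{n-1}\, d\mathcal{H}^{n-1},
\end{align*}
and by Hölder's inequality $M(T_t) \leq (n\omega_n t^{n-1})^{1/n}(\phi'(t))^{(n-1)/n}$.

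Write $P(t) = (n\omega_n t^{n-1})^{1/n}(\phi'(t))^{(n-1)/n}$ for this bound. When $M(T_t) \leq M$, the $(n-1)$-dimensional small mass isoperimetric inequality gives a filling $S_t$ with $\partial S_t = T_t$ and
\begin{align*}
    M(S_t) \leq A\, M(T_t)^{n/(n-1)} \leq A\, (n\omega_n)^{1/(n-1)}\, t\, \phi'(t).
\end{align*}
The same kind of estimate governs the geometry: if $M(T_t)$ is small, the support of $T_t$ has small diameter by a standard monotonicity/filling-radius argument. This lets us work inside a ball of radius $< \delta$, where \Cref{lemma-existence-of-potentials} applies.

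\textbf{Step 2 (comparison via calibration).} The current $F_*[\mathbb{B}(z,t)] - S_t$ is an $n$-cycle. Its mass is at most $E_N + A\,M(T_t)^{n/(n-1)}$, which is $< M'$ by the choice $E_N \leq M'/2$ and $E_N \leq M^{n/(n-1)}A$. Hence the $n$-dimensional isoperimetric inequality fills it with an $(n+1)$-current, and since $d\omega = 0$ we get $(F_*[\mathbb{B}(z,t)])(\omega) = S_t(\omega) \leq M(S_t)$. Alternatively, pair with the potential $\tau$ from \Cref{lemma-existence-of-potentials}.

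Combining this with the distortion inequality $c\,\|DF\|^n \leq K \star F^*\omega$ yields
\begin{align*}
    \phi(t) \leq \frac{K}{c}\, A\, (n\omega_n)^{1/(n-1)}\, t\, \phi'(t).
\end{align*}
Since $A_{n-1}^{-1} = n^{n/(n-1)}\omega_n^{1/(n-1)}$, we have $(n\omega_n)^{1/(n-1)} = 1/(n A_{n-1})$. The constant is therefore exactly $\frac{1}{n\beta}$ with $\beta$ as in \eqref{eq:holder-exponent}, which explains that definition. The hypothesis $M(T_t) \leq M$ is not needed separately: if $M(T_t) > M$, then $A\,M(T_t)^{n/(n-1)} > A M^{n/(n-1)} \geq E_N \geq \phi(t)$, so the inequality holds trivially.

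\textbf{Step 3 (from decay to Hölder).} On a.e. sphere, the oscillation of $F$ is bounded by $C\,(t\,\phi'(t))^{1/n}$, using Sobolev embedding on $\mathbb{S}^{n-1}$ since $n-1$ integrability on an $(n-1)$-sphere with exponent $n > n-1$ gives continuity. Choose dyadic radii $t$ where $t\,\phi'(t) \lesssim \phi(2t) - \phi(t)$ is controlled by $\phi(2t)$. Combining this with the decay from Steps 1–2 and a monotonicity argument (continuous Sobolev maps satisfy a weak maximum principle for the diameter of the ball image versus its sphere image), sum a geometric series to obtain
\begin{align*}
    d(F(x),F(y)) \leq C(n,\beta)\, E^{1/n} \left(\frac{|x-y|}{2r}\right)^{\beta}.
\end{align*}

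\textbf{Main obstacle.} The hardest step is controlling $\diam F(\mathbb{B}(z,t))$ by data on the sphere. This is what places us in the small ball where fillings and potentials are available. It is handled by the quasiregular-curve analogue of the maximum principle, which I would derive from positivity of $F_*[\mathbb{B}]$ (\Cref{lemma-homological-positive-current}) together with the monotonicity formula for positive $\omega$-currents.
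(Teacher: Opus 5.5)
Your Steps 1--2 follow the paper's route. The paper defers to Theorem~6.8 of the first author's pushforward paper, whose key estimate (its (6.8), reproduced in the proof of \Cref{proposition-quasiminimality-of-energy-estimate}) is exactly your inequality $\phi(t) \leq \frac{A_{n-1}}{\beta}\, h(t)^{n/(n-1)} \leq \frac{1}{n\beta}\, t\, \phi'(t)$, where $h(t) = \int_{\partial\mathbb{B}(z,t)} \|DF\|^{n-1}\,d\mathcal{H}^{n-1}$. Your identification of the constant via $A_{n-1}(n\omega_n)^{1/(n-1)} = 1/n$ is correct.

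One step needs repair. In Step~2, the bound $E_N + A\,M(T_t)^{n/(n-1)} < M'$ does not follow from $E_N \leq M'/2$ and $E_N \leq A M^{n/(n-1)}$, because nothing relates $A M^{n/(n-1)}$ to $M'/2$. When $M(T_t) \leq M$ but $A\,M(T_t)^{n/(n-1)} > E_N$, the $n$-cycle $F_*[\mathbb{B}(z,t)] - S_t$ may be too heavy for the $n$-dimensional isoperimetric inequality. Put the case split at $A\,M(T_t)^{n/(n-1)} \geq E_N$ instead of at $M(T_t) > M$. That case is trivial exactly as you argue, using $\phi(t) \leq E_N$ and $K/c \geq 1$. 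In the complementary case, $M(T_t) < (E_N/A)^{(n-1)/n} \leq M$, a near-optimal filling has $M(S_t) < E_N$, and the cycle has mass $< 2E_N \leq M'$.

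Your Step~3 detour through a maximum principle and monotonicity, and the ``main obstacle'' you identify, are unnecessary. Step~2 fills using the $n$-dimensional isoperimetric inequality, not \Cref{lemma-existence-of-potentials}, so no localization to a ball of radius $< \delta$ is ever required. Separately, the claim in Step~1 that small mass of $T_t$ forces small diameter of its support is false for $n-1 \geq 2$ (thin tentacles have small area), though you never actually use it.

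To finish, set $E = E(F|_{\mathbb{B}(x_0,2r)})$. Once $\phi_z(t) \leq (t/r)^{n\beta} E$ holds for all $z \in \mathbb{B}(x_0,r)$ and $t \leq r$, Hölder's inequality gives $\int_{\mathbb{B}(z,t)} \|DF\|\,dz \leq \omega_n^{(n-1)/n} E^{1/n} r^{-\beta} t^{n-1+\beta}$. Apply Morrey's Dirichlet growth lemma to the real functions $u_p = d(p, F(\cdot))$, which satisfy $|\nabla u_p| \leq \|DF\|$, and take $p = F(y)$. This gives the estimate for $|x-y| \leq r/2$, and chaining along the segment handles the remaining pairs. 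Working with $u_p$ rather than through a Nash embedding keeps the constant of the form $C(n,\beta)$: the Euclidean distance in $\R^\ell$ only bounds $d$ from below.
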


\subsection{Weak reverse Hölder inequality}
    We use the geometric assumptions to derive a quantitative weak reverse Hölder inequality in our setting. See \cite{onninen-pankka-quasiregular-curves-holder-continuity-and-higher-integrability,heikkila-2023-signed-quasiregular-curves} for a related results. The following statement is implicitly contained in the proof of \cite[Theorem 6.8]{ikonen-2026-pushforward-of-currents-under-sobolev-maps}.
\begin{proposition}\label{proposition-quasiminimality-of-energy-estimate}
    Let \((N,\omega)\) be an \(n\)-calibrated  \(m\)-manifold with an injectivity radius lower bound $2\delta$, a sectional curvature upper bound $( \pi/(2\delta) )^2$, and H\"older data \(E_N\) and \(\beta\) for $K \geq 1$. Then for any ball $\mathbb{B}(x_0,r) \subset \R^n$ and every $K$-quasiregular curve $F \colon \mathbb{B}(x_0,r) \to ( N, \omega )$ with $E( F|_{ \mathbb{B}(x_0,r) } )  \leq E_N$, it holds that 
    \begin{align*}
        \aint{ \mathbb{B}(x_0,s) } \|DF\|^{n} \,dz 
        \leq
        \frac{ 1 }{ \beta }
        \left(
            \aint{ \partial \mathbb{B}(x_0,s) }  \|DF\|^{n-1}(z)\,d\mathcal{H}^{n-1}(z)
        \right)^{ \frac{n}{n-1}  }
    \end{align*}
    for almost every $s \in (0,r)$.
\end{proposition}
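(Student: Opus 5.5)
The plan is to compare the energy on $\mathbb{B}(x_0,s)$ with a filling of the image of the boundary sphere, using the two small mass isoperimetric inequalities of \Cref{section-small-mass-isoperimetric-inequalities} together with closedness of $\omega$. Let $\rho$ be as in the pushforward theorem for $F$. By the Fubini discussion in \Cref{section-preliminaries}, for almost every $s\in(0,r)$ we have $[\mathbb{B}(x_0,s)]\in\mathbf{I}_{n,c}(\rho)$ and $\int_{\partial\mathbb{B}(x_0,s)}\|DF\|^{n-1}\,d\mathcal{H}^{n-1}<\infty$. Fix such an $s$ and write $B=\mathbb{B}(x_0,s)$, $T=F_*[B]$, $\Lambda=\int_{\partial B}\|DF\|^{n-1}\,d\mathcal{H}^{n-1}$. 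The chain map property gives $\partial T=F_*[\partial B]$, and the mass-energy estimate gives
\[
M(\partial T)\le\Lambda, \qquad M(T)\le E(F|_B)\le E_N\le M'/2 .
\]
The distortion inequality \eqref{equation-quasiregular-curve-distortion-inequality}, $\|\omega\|\ge c$, and compatibility with the pullback give
\[
\int_B\|DF\|^n\,dz\le \frac{K}{c}\,T(\omega).
\]

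\emph{Main estimate.} It suffices to show
\[
\int_B\|DF\|^n\,dz\le \frac{K}{c}\,A\,\Lambda^{\frac{n}{n-1}}. \tag{$\ast$}
\]
Indeed, dividing by $|B|=\omega_n s^n$ and using $\mathcal{H}^{n-1}(\partial B)=n\omega_n s^{n-1}$, one computes
\[
\frac{A\,\Lambda^{n/(n-1)}}{\omega_n s^n}
=\frac{A\,(n\omega_n)^{n/(n-1)}}{\omega_n}\left(\aint{\partial B}\|DF\|^{n-1}\right)^{\frac{n}{n-1}} .
\]
Since $(n\omega_n)^{n/(n-1)}/\omega_n=A_{n-1}^{-1}$, the constant becomes $\frac{K}{c}\frac{A}{A_{n-1}}=\beta^{-1}$ by \eqref{eq:holder-exponent}, which is the claim.

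\emph{Trivial case.} Suppose $A\Lambda^{n/(n-1)}\ge E_N$. Then $\int_B\|DF\|^n\le E_N\le A\Lambda^{n/(n-1)}$, and $(\ast)$ follows because $K/c\ge1$. This case also covers $\Lambda>M$, since then $A\Lambda^{n/(n-1)}>AM^{n/(n-1)}\ge E_N$ by \eqref{equation-small-mass-isoperimetric-inequalities}.

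\emph{Main case.} Suppose $A\Lambda^{n/(n-1)}<E_N$, so in particular $\Lambda\le M$.
\begin{enumerate}
\item The $(n-1)$-dimensional isoperimetric inequality applies to the cycle $\partial T$. For any $\varepsilon>0$ small, it yields $S\in\mathbf{I}_n(N)$ with $\partial S=\partial T$ and $M(S)\le A\Lambda^{n/(n-1)}+\varepsilon<E_N\le M'/2$.
\item The difference $T-S$ is then an $n$-cycle of mass strictly less than $M'$. The $n$-dimensional isoperimetric inequality gives $\Sigma\in\mathbf{I}_{n+1}(N)$ with $\partial\Sigma=T-S$.
\item Since $\omega$ is closed, $(T-S)(\omega)=\Sigma(d\omega)=0$, as in the proof of \Cref{lemma-homological-positive-current}. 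Because $\|\omega\|\le1$, we get $T(\omega)=S(\omega)\le M(S)\le A\Lambda^{n/(n-1)}+\varepsilon$.
\end{enumerate}
Letting $\varepsilon\to0$ gives $(\ast)$.

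The main obstacle is ensuring that $T-S$ lies strictly below the mass threshold $M'$, so that it bounds. This is exactly why $E_N$ is defined with the factor $M'/2$, and why the case split at $E_N$ (rather than only at $\Lambda\le M$) is needed: it keeps the filling $S$ small as well. The remaining points are routine. The first is the justification that $[B]$ and $[\partial B]$ are admissible for almost every $s$. The second is that $\omega$ may be evaluated on finite-mass integral currents, which holds because $\omega$ is bounded and $d\omega=0$ is bounded.
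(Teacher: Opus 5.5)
Your proof is correct and follows the paper's route. The paper imports from the proof of Theorem~6.8 in the pushforward paper the estimate $E(F|_{\mathbb{B}(x_0,s)}) \leq \frac{K}{c} A \bigl( \int_{\partial \mathbb{B}(x_0,s)} \|DF\|^{n-1} \,d\mathcal{H}^{n-1} \bigr)^{\frac{n}{n-1}}$ for almost every $s$, which is exactly your $(\ast)$, and then does the same normalization via $A_{n-1}$ and $\beta$; your filling argument (filling $\partial T$ isoperimetrically, using the threshold $M'/2$ in $E_N$ so that $T-S$ bounds, and using $d\omega=0$), with the case split for large boundary energy, writes out the details the paper delegates to that reference, as the paper itself does in the $n$-harmonicity lemma.
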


\begin{proof}
    The proof is similar to that of \cite[Theorem 6.8]{ikonen-2026-pushforward-of-currents-under-sobolev-maps}: indeed, using the notation from that proof, we let
    \begin{align*}
        H(s) = E( F|_{ \mathbb{B}(x_0,s) } )
        \quad\text{and}\quad
        h(s) = \int_{ \partial \mathbb{B}(x_0,s) }  \|DF\|^{n-1}(z)\,d\mathcal{H}^{n-1}(z),
    \end{align*}
    and note that \cite[equation (6.8)]{ikonen-2026-pushforward-of-currents-under-sobolev-maps} shows
    \begin{align*}
        \frac{ H(s) }{ \omega_n s^{n} }
        &\leq 
        \frac{ K }{ c } A \frac{ \left( n \omega_n s^{n-1} \right)^{ \frac{n}{n-1} } }{ \omega_n s^{n} }  \left( \frac{ h(s) }{ n \omega_n s^{n-1}  } \right)^{ \frac{n}{n-1} }
        =
        \frac{ 1 }{ \beta } \left( \frac{ h(s) }{ n \omega_n s^{n-1}  } \right)^{ \frac{n}{n-1} }
    \end{align*}
    for almost every $s \in (0,r)$. This is exactly the claim.
\end{proof}
    \Cref{proposition-quasiminimality-of-energy-estimate} leads to a weak reverse Hölder inequality by standard arguments.
\begin{proposition}\label{proposition-weak-reverse-holder-inequality}
    Let \((N,\omega)\) be an \(n\)-calibrated  \(m\)-manifold with an injectivity radius lower bound $2\delta$, a sectional curvature upper bound $( \pi/(2\delta) )^2$, and H\"older data \(E_N\) and \(\beta\) for $K \geq 1$.
    
    Then there exists $p = p( n, \beta ) > n$ and $C = C( n, \beta ) > 0$ with the following properties:  for any ball $\mathbb{B}(x_0,3r) \subset \R^n$ and every $K$-quasiregular curve $F \colon \mathbb{B}(x_0,3r) \to ( N, \omega )$ with $E( F|_{ \mathbb{B}(x_0,3r) } ) \leq E_N$, it holds that
    \begin{align*}
        \left( \aint{ \mathbb{B}(x_0,r) } \|DF\|^p \,dz \right)^{1/p}
        \leq
        C\left( \aint{ \mathbb{B}(x_0,2r) } \|DF\|^n \,dz \right)^{1/n}.
    \end{align*}
\end{proposition}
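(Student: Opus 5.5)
The plan is to derive a reverse Hölder inequality with a smaller exponent on the right from \Cref{proposition-quasiminimality-of-energy-estimate}, and then upgrade it with Gehring's lemma. Write $g = \|DF\|$. Fix any ball $\mathbb{B}(y,2t) \subset \mathbb{B}(x_0,3r)$. Its energy is at most $E(F|_{\mathbb{B}(x_0,3r)}) \leq E_N$, so \Cref{proposition-quasiminimality-of-energy-estimate} applies to $F|_{\mathbb{B}(y,2t)}$. It gives, for almost every $s \in (t,2t)$,
\begin{align*}
    \int_{\mathbb{B}(y,s)} g^n \,dz \leq \frac{(n\omega_n)^{\frac{n}{n-1}}}{\beta\,\omega_n}\left( \int_{\partial \mathbb{B}(y,s)} g^{n-1}\,d\mathcal{H}^{n-1} \right)^{\frac{n}{n-1}}.
\end{align*}
Here I used that $s^n$ cancels, since $(s^{n-1})^{n/(n-1)} = s^n$.

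Next I make the right-hand side independent of $s$. Bound the left-hand side below by $\int_{\mathbb{B}(y,t)} g^n$, which does not depend on $s$. Then raise both sides to the power $(n-1)/n$ and integrate over $s \in (t,2t)$. By Fubini this gives
\begin{align*}
    t\Big(\int_{\mathbb{B}(y,t)} g^n\Big)^{\frac{n-1}{n}} \leq C(n,\beta) \int_{\mathbb{B}(y,2t)\setminus \mathbb{B}(y,t)} g^{n-1}\,dz .
\end{align*}
Dividing by the appropriate powers of $t$, this becomes
\begin{align*}
    \Big(\aint{\mathbb{B}(y,t)} g^n\Big)^{1/n} \leq C(n,\beta)\Big(\aint{\mathbb{B}(y,2t)} g^{n-1}\Big)^{1/(n-1)}.
\end{align*}
This is a weak reverse Hölder inequality: exponent $n$ on the left, exponent $n-1 < n$ on the right, and the right-hand ball is the concentric double. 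It holds uniformly for all balls with $\mathbb{B}(y,2t) \subset \mathbb{B}(x_0,3r)$, so in particular for all balls $\mathbb{B}(y,2t)$ whose centre $y$ lies in $\mathbb{B}(x_0,2r)$ and whose radius satisfies $t \leq r/2$.

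To conclude, I apply Gehring's lemma in its local form, for example as in Giaquinta's book or Iwaniec's version with different balls. I apply it to the function $g^{n-1} \in L^{n/(n-1)}$ with exponent $q = n/(n-1) > 1$. It yields $\epsilon = \epsilon(n,\beta) > 0$ such that $g \in L^{p}_{\loc}$ with $p = n(1+\epsilon) > n$. It also gives a constant $C(n,\beta)$ with
\begin{align*}
    \Big(\aint{\mathbb{B}(x_0,r)} g^p\Big)^{1/p} \leq C\Big(\aint{\mathbb{B}(x_0,2r)} g^n\Big)^{1/n}.
\end{align*}
The constants depend only on $n$ and on the constant in the weak reverse Hölder inequality, which in turn depends only on $n$ and $\beta$. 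The third ball $\mathbb{B}(x_0,3r)$ provides the room needed so that every ball centred in $\mathbb{B}(x_0,2r)$ with small radius has its double inside the region where the energy bound holds.

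The main obstacle is to make sure the version of Gehring's lemma used covers this setting: the inequality holds only on balls $\mathbb{B}(y,t)$ whose double lies in the domain, with a different ball on each side. The standard Calderón--Zygmund/covering proof handles this. Scaling also needs care, because the constant in \Cref{proposition-quasiminimality-of-energy-estimate} must be scale invariant; it is, since both sides of that inequality are averages.
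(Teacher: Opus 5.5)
Your proposal is correct and follows the paper's proof. Both derive from \Cref{proposition-quasiminimality-of-energy-estimate} the weak reverse H\"older inequality $\bigl(\aint{\mathbb{B}(a,\rho)}\|DF\|^n\,dz\bigr)^{(n-1)/n}\leq C(n,\beta)\aint{\mathbb{B}(a,2\rho)}\|DF\|^{n-1}\,dz$ and then invoke Gehring's lemma; the only cosmetic difference is that you average the estimate over $s\in(t,2t)$, whereas the paper uses Fubini to pick a single good radius $s\in(\rho,2\rho)$, and one harmless slip is that the constant in your first display should be $\omega_n/\bigl(\beta(n\omega_n)^{n/(n-1)}\bigr)$ rather than its reciprocal.
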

\begin{proof}
    If we fix $\rho \in (0,r)$ and $a \in \mathbb{B}(x_0,r)$, by Fubini's theorem, there exists $s \in (\rho,2\rho)$ satisfying the conclusion of \Cref{proposition-quasiminimality-of-energy-estimate} and 
    \begin{align*}
        \int_{ \partial \mathbb{B}(a,s) } \|DF\|^{n-1} \,d\mathcal{H}^{n-1}
        \leq
        \frac{ 1 }{ \rho }
        \int_{ \mathbb{B}(a,2\rho) \setminus \overline{\mathbb{B}}(a,\rho) }  \|DF\|^{n-1} \,dz
        \leq
        \frac{ 1 }{ \rho }
        \int_{ \mathbb{B}(a,2\rho) }  \|DF\|^{n-1} \,dz.
    \end{align*} 
    This implies
    \begin{align*}
        \aint{ \partial \mathbb{B}(a,s) } \|DF\|^{n-1} \,d\mathcal{H}^{n-1}
        \leq
        \frac{ 2^{n} }{ n } 
        \aint{ \mathbb{B}(a,2\rho) }  \|DF\|^{n-1}\,dz.
    \end{align*}
    We also notice that
    \begin{align*}
        \aint{ \mathbb{B}(a,\rho) } \|DF\|^{n} \,dz
        \leq
        2^{n}
        \aint{ \mathbb{B}(a,s) } \|DF\|^{n} \,dz.
    \end{align*}
    Therefore, by the conclusion of \Cref{proposition-quasiminimality-of-energy-estimate} and the above, it holds that
    \begin{align*}
        \aint{ \mathbb{B}(a,\rho) } \|DF\|^n \,dz
        &\leq
        \frac{ 2^{n} }{ \beta }
            \left( \aint{ \partial \mathbb{B}(a,s) }  \|DF\|^{n-1}\,d\mathcal{H}^{n-1} \right)^{ \frac{n}{n-1} }
        \\
        &\leq
        \frac{ 2^{n} }{ \beta }
        \left( \frac{ 2^{n} }{ n } \right)^{ \frac{n}{n-1}  }
        \left( \aint{ \mathbb{B}(a,2\rho) }  \|DF\|^{n-1}\,dz \right)^{ \frac{n}{n-1} }.
    \end{align*}
    It follows that
    \begin{align*}
        \left( \aint{ \mathbb{B}(a,\rho) } \|DF\|^{n}(z) \,dz \right)^{\frac{n-1}{n}}
        \leq
        \frac{ 2^{n-1} }{ \beta^{ \frac{n-1}{n} } } 
        \frac{ 2^{n} }{ n }
        \aint{ \mathbb{B}(a,2\rho) }  \|DF\|^{n-1}\,dz.
    \end{align*}
    By standard iteration arguments (see Gehring's lemma \cite[Theorem 3.22]{bjorn-bjorn-2011-non-linear-potential-theory-on-metric-spaces}), there exist $p = p(n, \beta ) > n$ and $C = C(n, \beta ) > 0$ such that
    \begin{align*}
        \left( \aint{ \mathbb{B}(x_0,r) } \|DF\|^{p}(z) \,dz \right)^{1/p}
        \leq
        C\left( \aint{ \mathbb{B}(x_0,2r) } \|DF\|^{n}(z) \,dz \right)^{1/n}.
    \end{align*}
    The claim follows.
\end{proof}

\subsection{Caccioppoli's inequality}\label{section-caccioppoli-inequality}
In this section, we prove the following non-linear version of a Caccioppoli type inequality. A similar inequality is known for Euclidean targets \cite{onninen-pankka-quasiregular-curves-holder-continuity-and-higher-integrability}.
\begin{proposition}\label{proposition-caccioppoli-inequality}
    Let \((N,\omega)\) be an \(n\)-calibrated  \(m\)-manifold with an injectivity radius lower bound $2\delta$, a sectional curvature upper bound $( \pi/(2\delta) )^2$, and comass lower bound \(c > 0\), and let $K \geq 1$.

    Then there exists $C = C(n) > 0$ with the following property: for any ball $\mathbb{B}(x_0,2r) \subset \R^n$ and every $K$-quasiregular curve $F \colon \mathbb{B}(x_0,2r) \to ( N, \omega )$ with $$\diam( F( \mathbb{B}(x_0,2r) ) ) < \delta,$$ it holds that
    \begin{align*}
        E( F|_{ \mathbb{B}(x_0,r) } ) 
        \leq 
        C(n)
        \left( 
            \frac{ K }{ c }
        \right)^n
        \diam( F( \mathbb{B}(x_0,2r) ) )^{n}.
    \end{align*}
\end{proposition}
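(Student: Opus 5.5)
The plan is to combine the distortion inequality with the quantitative Poincaré lemma (\Cref{lemma-existence-of-potentials}) and a cutoff argument, in the spirit of the classical Caccioppoli inequality for quasiregular mappings. Set $D = \diam(F(\mathbb{B}(x_0,2r)))$ and pick $y_0 = F(x_0)$. Then $F(\mathbb{B}(x_0,2r)) \subset \mathbb{B}_N(y_0, D')$ for any $D' \in (D,\delta)$, so \Cref{lemma-existence-of-potentials} with radius $D'$ gives a smooth potential $\tau$ of $\omega$ on $\mathbb{B}_N(y_0,D')$ with comass $\|\tau\| \leq 2^{n-1}D'$, using $\|\omega\| \leq 1$. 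At the end we let $D' \to D$.

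Fix a Lipschitz cutoff $\eta$ with $\eta = 1$ on $\mathbb{B}(x_0,r)$, $\operatorname{spt}\eta \subset \mathbb{B}(x_0,2r)$, and $|\nabla \eta| \leq 1/r$. Using $c \leq \|\omega\|$ and the distortion inequality, we get
\begin{align*}
    c\int \eta^n \|DF\|^n \,dz \leq K \int \eta^n F^{*}\omega = K \int \eta^n \, d(F^{*}\tau) = -K\int d(\eta^n) \wedge F^{*}\tau .
\end{align*}
The key technical step is justifying $F^{*}\omega = d(F^{*}\tau)$ weakly for a $W^{1,n}_{\loc}$ map. I would do this either by approximating $F$ in $W^{1,n}$ by smooth maps into the ball, obtained by mollifying in the Nash embedding and composing with the nearest-point projection and a retraction to $\overline{\mathbb{B}}_N(y_0,D')$, which is possible since the image is continuous and compact. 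Alternatively, I would use the pushforward machinery recalled above, via $(F_*T)(d\tau) = (F_*\partial T)(\tau)$ on $[\{\eta > t\}]$ for a.e.\ $t$, followed by a coarea integration in $t$. I expect this to be the main obstacle; everything else is routine.

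With the identity in hand, the right-hand side is bounded by
\begin{align*}
    K n \int \eta^{n-1}|\nabla\eta| \,\|\tau\|\,\|DF\|^{n-1}\,dz \leq K n 2^{n-1}D' r^{-1} \Big(\int \eta^n\|DF\|^n\Big)^{\frac{n-1}{n}} |\mathbb{B}(x_0,2r)|^{1/n},
\end{align*}
by Hölder's inequality. Since $E(F|_{\mathbb{B}(x_0,2r)})$ may a priori be infinite, I would first work on $\mathbb{B}(x_0,2r-\varepsilon)$, where the energy is finite, and then let $\varepsilon \to 0$. Dividing by the finite quantity $(\int \eta^n\|DF\|^n)^{(n-1)/n}$ then yields
\[
    \Big(\int\eta^n\|DF\|^n\Big)^{1/n} \leq \frac{K}{c}\, n2^{n-1}\omega_n^{1/n}\,2\,D'.
\]
Raising this to the $n$th power and letting $D' \to D$ gives the claim with $C(n) = (n2^n)^n\omega_n$.
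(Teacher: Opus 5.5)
Your proof is correct and has the same skeleton as the paper's: take the potential $\tau$ from \Cref{lemma-existence-of-potentials} on a geodesic ball of radius slightly larger than $\diam F(\mathbb{B}(x_0,2r))$, apply a Caccioppoli estimate with $\|\tau\| \le 2^{n-1}R$, and let $R$ decrease to the diameter. The only difference is that the paper cites Pankka's Caccioppoli inequality for quasiregular curves (his Proposition~3.1, stated with the weight $\|\tau\|^n/\|\omega\|^{n-1}$) as a black box, whereas you reprove it inline using the cutoff $\eta^n$, the weak identity $F^{*}\omega = d(F^{*}\tau)$ and H\"older absorption, inserting $c \le \|\omega\|$ from the start; this gives the explicit constant $C(n) = (n2^n)^n\omega_n$.
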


\begin{proof}
    Let $y_0 \in F( \mathbb{B}(x_0,2r) )$. Let \(R\) satisfy $\diam( F( \mathbb{B}( x_0, 2r ) ) ) < R < \delta$, and let $\tau$ be a potential of $\omega$ in $\mathbb{B}( y_0, R )$ provided by \Cref{lemma-existence-of-potentials}. Then \cite[Proposition 3.1]{pankka-quasiregular-curves} implies, for some $C = C(n) > 0$, that
    \begin{align*}
        \frac{c}{K} 
        E( F|_{ \mathbb{B}(x_0,r) } ) 
        &\leq
        C K^{n-1}
        \left( \frac{2}{r} \right)^n
        \int_{ \mathbb{B}(x_0,2r) } \left( \frac{ \|\tau\|^{n} }{ \|\omega\|^{n-1} } \right) \circ F \,dz
        \\
        &\leq
        C K^{n-1}
        2^{2n}\omega_n
        \frac{ 2^{n(n-1)}R^{n} }{ c^{n-1} }.
    \end{align*}
    Rearranging the inequality leads to
    \begin{align*}
        E( F|_{ \mathbb{B}(x_0,r) } ) 
        &\leq
        C \omega_n 2^{ n(n+1) } \left( \frac{K}{c} \right)^n R^n,
    \end{align*}
    and passing to the limit $R \rightarrow \diam( F( \mathbb{B}( x_0, 2r ) ) )$ completes the proof.
\end{proof}

\section{Equicontinuity and sectional curvature upper bounds}\label{section-equicontinuity-and-sectional-curvature-bounds}

    In this section, we consider various equicontinuity and normality criteria for families of quasiregular curves. Here and below, a family $\mathcal{F}$ of maps $f \colon X \to Y$ between metric spaces is \emph{equicontinuous} if, for every $\eta > 0$ and $x \in X$, there exists $r > 0$ such that $\diam f( \mathbb{B}( x,r ) ) < \eta$ for every $f \in \mathcal{F}$. Recall also that a subset $U \subset \Omega \subset \R^n$ is \emph{precompact} if the closure $\overline{U}$ in $\R^n$ is a compact subset in $\Omega$.
\begin{theorem}\label{theorem-equicontinuous-family}
    Let $\Omega \subset \R^n$ be an open set, and let \((N,\omega)\) be an \(n\)-calibrated \(m\)-manifold with an injectivity radius lower bound $2\delta$, a sectional curvature upper bound $( \pi/(2\delta) )^2$, and H\"older data \(E_N\) and \(\beta\) for $K \geq 1$. Let $\mathcal{F}$ be a family of $K$-quasiregular curves $\Omega \to ( N, \omega )$.
    Then the following are equivalent: 
    \begin{enumerate}
        \item $\mathcal{F}$ is equicontinuous;  
        \item for every compact $E \subset \Omega$, there exists a constant $M > 0$ such that
        \begin{align*}
            d( F(x), F(y) ) \leq M |x-y|^{ \beta }
        \end{align*}
        for every $x, y \in E$ and $F \in \mathcal{F}$;
        \item for every $x \in \Omega$, there exists $r > 0$ such that $ \sup_{F \in \mathcal{F}} E( F|_{ \mathbb{B}(x,r) } ) \leq E_N$;
        \item for every $\eta > 0$ and $x \in \Omega$, there exists $r > 0$ such that 
        \[
            \sup_{F \in \mathcal{F}} E( F|_{ \mathbb{B}(x,r) } ) < \eta;
        \]
        \item the family of Radon measures $\{ \star F^{*}\omega \mid F \in \mathcal{F} \}$ is weak-\(\star\) precompact and its weak-\(\star\) closure contains only non-atomic Radon measures; and
        \item for the exponent $p$ from \Cref{proposition-weak-reverse-holder-inequality}, the family of functions $\{ \star F^{*}\omega \mid F \in \mathcal{F} \}$ is weakly compact in $L^{p/n}_{\loc}( \Omega )$.
    \end{enumerate}
\end{theorem}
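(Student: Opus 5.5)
I plan to prove the equivalences as a cycle through the energy conditions: (2) $\Rightarrow$ (1) $\Rightarrow$ (4) $\Rightarrow$ (3) $\Rightarrow$ (2). Separately, I will show (4) $\Leftrightarrow$ (5) and (3) $\Rightarrow$ (6) $\Rightarrow$ (5).

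The implications (2) $\Rightarrow$ (1) and (4) $\Rightarrow$ (3) are immediate.

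For (1) $\Rightarrow$ (4), fix $x$ and $\eta>0$. By equicontinuity, choose $r$ with $\diam F(\mathbb{B}(x,2r)) < \min\{\delta, \eta'\}$ for every $F \in \mathcal{F}$, where $\eta'$ is a small threshold to be fixed. The Caccioppoli inequality, \Cref{proposition-caccioppoli-inequality}, then gives $E(F|_{\mathbb{B}(x,r)}) \leq C(n)(K/c)^n (\eta')^n$. Choosing $\eta'$ so that the right-hand side is less than $\eta$ proves (4).

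For (3) $\Rightarrow$ (2), cover the compact set $E$ by finitely many balls $\mathbb{B}(x_i,r_i/2)$, where each $r_i$ is the radius given by (3). Applying \Cref{theorem-modulus-of-continuity-estimates} on $\mathbb{B}(x_i,r_i)$ with energy at most $E_N$ gives
\[
d(F(x),F(y)) \leq C(n,\beta) E_N^{1/n} (|x-y|/r_i)^\beta
\]
for $x,y \in \mathbb{B}(x_i,r_i/2)$. To handle pairs $x,y$ lying in different balls, I use a Lebesgue number $\lambda$ of the cover. If $|x-y|<\lambda$, both points lie in a single ball and the local estimate applies. If $|x-y|\geq\lambda$, I need a uniform bound on $d(F(x),F(y))$. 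I would obtain it by chaining through the finitely many balls of the cover, restricting to a connected compact neighbourhood of $E$ (or to each component) so that the balls can be linked. A bound independent of $|x-y|$ then converts into a Hölder bound, since $1 \leq (|x-y|/\lambda)^\beta$ in this range. Across distinct components the distance may be unbounded if $N$ is noncompact. This is the step I expect to be most delicate. If the statement is meant componentwise, or modulo the diameter of $N$, I would handle it by noting that equicontinuity only concerns local behaviour and requiring (2) for compact $E$ within a component; I would check the intended convention there.

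For (4) $\Rightarrow$ (5), note first that $0 \leq \star F^*\omega \leq \|DF\|^n$ and that the energies are locally uniformly bounded, by (4) and compactness. Hence the measures are locally uniformly bounded, which gives weak-$\star$ precompactness. For any weak-$\star$ limit $\mu$, (4) gives $\mu(\mathbb{B}(x,r)) \leq \eta$, so $\mu$ has no atoms.

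Conversely, for (5) $\Rightarrow$ (4), I argue by contradiction. If (4) fails at $x$, there exist $\eta>0$, radii $r_j\to0$ and curves $F_j$ with $E(F_j|_{\mathbb{B}(x,r_j)})\geq\eta$. Since $\star F_j^*\omega \geq (c/K)\|DF_j\|^n$, the measures satisfy $\mu_j(\overline{\mathbb{B}}(x,r_j)) \geq c\eta/K$. A weak-$\star$ subsequential limit $\mu$ then satisfies $\mu(\overline{\mathbb{B}}(x,r)) \geq c\eta/K$ for every $r>0$, by upper semicontinuity on compact sets. Hence $\mu(\{x\})>0$, contradicting (5).

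For (3) $\Rightarrow$ (6), the weak reverse Hölder inequality, \Cref{proposition-weak-reverse-holder-inequality}, together with a covering argument, bounds $\|DF\|^n$ uniformly in $L^{p/n}_{\loc}$. Since $p/n>1$, the space is reflexive and bounded sets are weakly precompact. I would interpret ``weakly compact'' as precompact, or pass to the closure.

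For (6) $\Rightarrow$ (5), a locally bounded family in $L^{p/n}$ with $p>n$ is locally equi-integrable. By Hölder's inequality, $\mu(\mathbb{B}(x,r)) \leq \|f\|_{L^{p/n}} |\mathbb{B}(x,r)|^{1-n/p}$ for every $\mu = f\,dz$ in the family, and this bound passes to weak-$\star$ limits. Hence the limits are absolutely continuous and in particular non-atomic.
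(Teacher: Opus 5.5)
Your proposal is correct and follows essentially the paper's route: the Caccioppoli inequality gives (1) $\Rightarrow$ (4), the modulus of continuity estimate with a covering argument gives (2), an atom at a limit point gives the contradiction for (5), and the weak reverse H\"older inequality gives (6); your extra direct proof of (4) $\Rightarrow$ (5) is harmless. Your worry about the chaining step is justified, and the paper's ``standard covering argument'' passes over it: (2) does follow for compact $E$ inside a single component of $\Omega$, or when $N$ is closed, but the literal statement can fail otherwise, for instance for maps into $\R^m$ that are constant on each of two disjoint balls with the two values drifting apart.
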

\begin{remark}\label{remark-on-equicontinuity}
    In the statement above, the space of Radon measures is understood as elements in the topological dual of the compactly-supported continuous functions $\mathcal{C}_{c}(\Omega)$. Here $\mathcal{C}_{c}(\Omega)$ is equipped with the following (inductive) topology: $f_j \rightarrow f$ if (and only if) there exists a compact set $K \subset \Omega$ such that all but finitely many of $(f_j)_j$ are supported in $K$ and $f_j \rightarrow f$ uniformly. 

    Weak compactness of the Lebesgue space $L^{p/n}_{\loc}( \Omega )$ is understood as follows: $\mathcal A \subset L^{p/n}_{\loc}( \Omega )$ is weakly compact if for every sequence $( h_j )_j \subset \mathcal A$, there exists $h \in L^{p/n}_{\loc}( \Omega )$ and a subsequence $( h_{j_k} )_k$ such that $h_{j_k} \rightharpoonup h$ weakly in $L^{p/n}( K )$ for every compact $K \subset \Omega$.

    In the last two conditions of \Cref{theorem-equicontinuous-family}, an equivalent condition is obtained if $\star F^{*}\omega$ is replaced by $\|DF\|^n$ or $\|DF\|^{n}_{HS}$. This follows from the two-sided pointwise comparability of all these functions for quasiregular curves.
\end{remark}
\begin{proof}
    We first prove the equivalence of (1), (2), and (4). 
    Observe that `(2) $\Rightarrow$ (1)' is immediate and `(1) $\Rightarrow$ (4)' follows from \Cref{proposition-caccioppoli-inequality}. It remains to prove `(4) $\Rightarrow$ (2)'. To see this, let $0 < \eta \leq E_N$. Consider $x \in \Omega$ and let $ r > 0$ be sufficiently small that $\mathbb{B}(x,2r) \subset \Omega$ and
    \begin{align*}
        \sup_{F \in \mathcal{F}} E( F|_{ \mathbb{B}(x,2r) } ) < \eta.
    \end{align*}
    Then \Cref{theorem-modulus-of-continuity-estimates} implies
    \begin{align*}
        d( F(y), F(z) ) \leq C(n,\beta) \eta^{1/n} \left( \frac{ |y-z| }{ 2r } \right)^{\beta} 
    \end{align*}
    for every $y, z \in \mathbb{B}( x, r )$ and $F \in \mathcal{F}$. Now a standard covering argument yields (2). 
    Now, clearly `(4) $\Rightarrow$ (3)' holds, while `(3) $\Rightarrow$ (1)' follows from \Cref{theorem-modulus-of-continuity-estimates}. This establishes that (1)--(4) are equivalent.

    It remains to show that (5) and (6) are equivalent to (1)--(4). We first prove `(5) $\Rightarrow$ (3)'. Suppose for contradiction that there exists \(x \in \Omega\) such that 
    \[
    \sup_{F \in \mathcal{F}} E(F|_{\mathbb{B}(x,r)}) > E_N \quad\text{for all sufficiently small \(r>0\)}.
    \]
    Hence, there is a sequence \((F_j)_j\) in \(\mathcal{F}\) which satisfies \(\limsup_{j \to \infty} E(F_j|_{\mathbb{B}(x,r)}) \geq E_N\) for every \(r>0\) which is sufficiently small.
    Set \(\mu_j \coloneqq \star F_j^*\omega\). Without loss of generality and by (5), we may assume that \((\mu_j)_j\) converges in the weak-\(\star\) topology to a non-atomic Radon measure \(\mu\). Hence
    \[
        \frac{c}{K} E_N \leq \frac{c}{K} \limsup_{j\to\infty} E(F_j|_{\overline{\mathbb{B}}(x,r)}) \leq \limsup_{j\to\infty} \mu_j(\overline{\mathbb{B}}(x,r)) \leq \mu (\overline{\mathbb{B}}(x,r)),
    \]
    for the comass lower bound $c > 0$ of $\omega$. Letting \(r \to 0\), we obtain \(0 < (c/K)E_N \leq \mu(\{x\})\). This is a contradiction as \(\mu\) is non-atomic.

    Clearly `(6) $\Rightarrow$ (5)'. This follows from the fact that each compactly-supported continuous function in $\Omega$ is in \(L^q(D)\) for every \(q \in (1,\infty)\) and all precompact open sets \(D\subset \Omega\).
    Then it remains to show `(3) $\Rightarrow$ (6)' to establish the full equivalence. By (3), for each \(x \in \Omega\), there exists \(r> 0\) for which \(\mathbb{B}(x,3r) \subset \Omega\) and \(\sup_{F\in\mathcal{F}} E(F|_{\mathbb{B}(x,3r)}) \leq E_N\). 
    Now \Cref{proposition-weak-reverse-holder-inequality} implies that
    \begin{equation}\label{eq:thm-equicont-family-bounded-from-weak-reverse-holder}
        \sup_{F \in \mathcal{F}} \int_{\mathbb{B}(x,r)} \norm{DF}^p \,dz 
        \leq (C/2)^p (\omega_n r^n)^{1 - p/n} E_N^{p/n}.    
    \end{equation}    
    From this, we conclude that the family \(\set{ \norm{DF}^n \mid F \in \mathcal{F}}\) is bounded in \(L^{p/n}(D)\) for each precompact open set \(D \subset \Omega\) since each such \(D\) can be covered with finitely many balls satisfying \eqref{eq:thm-equicont-family-bounded-from-weak-reverse-holder}. The claim follows from an exhaustion by compact sets of \(\Omega\) and a standard diagonal argument.
\end{proof}

As a corollary of our equicontinuity criteria \Cref{theorem-equicontinuous-family} and \Cref{proposition-weak-reverse-holder-inequality}, we have the following result concerning the stability of quasiregularity and the continuity of certain associated measures under local uniform limits.
\begin{corollary}\label{corollary-convergence-of-pullbacks}
    Let $\Omega \subset \R^n$ be an open set, and let \((N,\omega)\) be an \(n\)-calibrated  \(m\)-manifold with an injectivity radius lower bound $2\delta$, a sectional curvature upper bound $( \pi/(2\delta) )^2$, and $K \geq 1$.

    If $( F_j \colon \Omega \to (N,\omega ) )_j$ is a sequence of $K$-quasiregular curves converging to $F \colon \Omega \to (N,\omega)$ locally uniformly, then \(F\) is a \(K\)-quasiregular curve and in each precompact open $U \subset \Omega$, it holds that
    \begin{align*}
        E( F|_{U} ) \leq \liminf_{ j \to \infty } E( F_j|_{U} ) \leq \sup_{ j } E( F_j|_{U} ) < \infty.
    \end{align*} 
    Moreover, for any $\eta \in \Omega_{b}^{k}( N )$ and the exponent $p > n$ from \Cref{proposition-weak-reverse-holder-inequality}, $( F_j^{*}\eta|_{U} )_j$ weakly converges to $F^{*}\eta|_{U}$ in $L^{p/k}( \Omega^{k}(U) )$. In particular, 
     \[
         \lim_{j\to\infty} \int_V F_j^{*}\omega = \int_V F^{*}\omega
     \]
     for all precompact Borel sets \(V \subset \Omega\).
\end{corollary}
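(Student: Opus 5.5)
Since $(F_j)_j$ converges locally uniformly, it is equicontinuous on compact subsets of $\Omega$. Hence \Cref{theorem-equicontinuous-family} applies to the family $\{F_j\}_j$: condition (1) gives (3), and \Cref{proposition-weak-reverse-holder-inequality} then yields, via the covering argument at the end of that proof, a uniform bound
\begin{align*}
    \sup_j \int_U \|DF_j\|^p \,dz < \infty
\end{align*}
on each precompact open $U \subset \Omega$, for the exponent $p = p(n,\beta) > n$. In particular $\sup_j E(F_j|_U) < \infty$.

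The next step is to pass to the limit in the Sobolev sense. Using a proper Nash embedding $N \hookrightarrow \R^\ell$, the maps $F_j$ are bounded in $W^{1,p}(U,\R^\ell)$ and converge uniformly to $F$. Therefore $F \in W^{1,p}_{\loc}$ and $DF_j \rightharpoonup DF$ weakly in $L^p$. Since $F$ takes values in $N$, it follows that $F \in W^{1,n}_{\loc}(\Omega,N)$. Lower semicontinuity of the convex functional $\int_U \|Df\|^n$ under weak convergence then gives $E(F|_U) \leq \liminf_j E(F_j|_U)$.

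For the pullbacks, fix $\eta \in \Omega^k_b(N)$. The forms $F_j^*\eta$ are bounded in $L^{p/k}$, so along subsequences they converge weakly, and it suffices to identify the limit as $F^*\eta$; uniqueness of the limit then upgrades this to convergence of the full sequence. I would use the standard weak continuity of Jacobian-type minors.
\begin{enumerate}
    \item First take $\eta$ smooth and compactly supported in a coordinate chart. Write $\eta = \sum a_I \, dy^{I}$, so that $F_j^*\eta = \sum (a_I \circ F_j)\, dF_j^{i_1} \wedge \dots \wedge dF_j^{i_k}$.
    \item The factors $a_I \circ F_j$ converge uniformly.
    \item The wedge products of the differentials converge in distributions to the corresponding products for $F$. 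This follows by induction on $k$, writing $dF^{i_1} \wedge \omega' = d(F^{i_1}\omega')$ and using the uniform convergence of $F^{i_1}_j$ together with the bound on $\omega'$ in $L^{p/(k-1)}$, where $p > n \geq k$.
    \item Combining the distributional limit with the $L^{p/k}$ bound identifies the weak limit as $F^*\eta$.
\end{enumerate}
The general bounded $\eta$ is handled through a partition of unity on the compact set $\overline{\bigcup_j F_j(U)}$. This set is compact because the maps converge uniformly and $U$ is precompact.

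Quasiregularity of $F$ follows from the same ingredients. For every nonnegative $\phi \in \mathcal{C}_c(U)$,
\begin{align*}
    \int \phi\,(\|\omega\|\circ F)\|DF\|^n
    \leq \liminf_j \int \phi\,(\|\omega\|\circ F_j)\|DF_j\|^n
    \leq K \lim_j \int \phi \star F_j^*\omega
    = K \int \phi \star F^*\omega .
\end{align*}
The first inequality uses the uniform convergence of $\|\omega\|\circ F_j$ together with weak lower semicontinuity. This yields the distortion inequality almost everywhere. Finally, for a precompact Borel set $V$, we have $\chi_V \in L^{(p/n)'}$, so weak $L^{p/n}$ convergence of $\star F_j^*\omega$ gives $\int_V F_j^*\omega \to \int_V F^*\omega$.

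I expect the main obstacle to be the weak continuity of the pullbacks $F_j^*\eta$ for manifold-valued maps, that is, the induction on the degree of the minors. The higher integrability $p > n$ is exactly what makes the products in that induction pass to the limit.
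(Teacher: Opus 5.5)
Your proposal is correct and follows essentially the same route as the paper: equicontinuity of the convergent sequence together with the weak reverse H\"older inequality gives a uniform $W^{1,p}$ bound on each precompact $U$, the pullbacks are handled by localizing $\eta$ and using weak continuity of minors (which you prove by the standard induction rather than citing), and quasiregularity of $F$ comes from applying this to $\omega$. The differences are only minor. You localize with a partition of unity on the compact set $\overline{\bigcup_j F_j(U)}$ instead of the paper's exhaustion cutoffs $h(2^{-j}g)$. Also, in the chart step you should extend the coordinate functions to compactly supported smooth functions on $N$ (or extend $\eta$ to $\R^\ell$ via the Nash embedding, as the paper does), so that the functions $F_j^{i}$ are defined on all of $U$ when you integrate by parts.
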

    The energy bounds and the quasiregularity of $F$ were already established in \cite[Theorem 4.1]{pankka-quasiregular-curves}. We provide an independent proof as a simple application of \Cref{proposition-weak-reverse-holder-inequality} and \Cref{theorem-equicontinuous-family}.
\begin{proof}
    The energy bounds for the sequence $( F_j )_j$ and $F$ are immediate from \Cref{theorem-equicontinuous-family}.
    When $U \subset \Omega$ is precompact and open, \Cref{proposition-weak-reverse-holder-inequality} and \Cref{theorem-equicontinuous-family} imply that $( F_j )_j$ is bounded in $W^{1,p}( U, N )$, so the weak convergence of $( F_j )_j$ to $F$ in $W^{1,p}( U, N )$ is immediate. Using this fact, the claimed weak convergence $( F_j^{*}\eta|_{U} )_j$ to $F^{*}\eta|_{U}$ follows using standard arguments. We give an outline of the argument below.

    We reduce the argument to a simpler case as in \cite[Proof of Lemma 5.1]{ikonen-2026-pushforward-of-currents-under-sobolev-maps}. We start by recalling that there exists a $1$-Lipschitz proper map $g \colon N \to \R$. Then, let $\psi_j = h( 2^{-j}g )$ for a smooth $2$-Lipschitz map $h \colon \R \to [0,1]$ that satisfies $h(r) = 1$ if $r \leq 1$ and $h(r) = 0$ if $r \geq 2$. Here $\psi_j$ is compactly-supported $2^{1-j}$-Lipschitz and $( \psi_j )_j$ converges locally uniformly to the constant map one. Now, the claim about the weak convergence of pullbacks follows if it holds for each $\eta_j = \psi_j \eta$, by construction. This reduces the claim to the case where $\eta$ is compactly-supported. Since every compactly-supported $\eta$ is a finite sum of smooth forms compactly-supported in a coordinate ball, we lose no generality in supposing that the support of $\eta$ lies in a coordinate ball. These facts reduce the statement to the special case where $N = \R^\ell$ for some $\ell \geq n$ and $\eta$ is a smooth and compactly-supported $k$-form in $\R^\ell$. Now, the claim follows from standard weak continuity properties of minors of the weak differentials of $( F_j )_j$ \cite[Lemma 5.10]{rindler-2018-calculus-of-variations}. By applying this result for $\omega$, we obtain the $K$-quasiregularity of $F$ as well.
\end{proof}

With the above results on equicontinuity and stability of quasiregularity, we present a normality criterion for families of quasiregular curves. Before that, we recall some standard terminology. Here and below, a family $\mathcal{F}$ of maps $f \colon X \to Y$ is \emph{equibounded in space} if for every compact set $E \subset X$, there exists a compact set $E' \subset Y$ such that $f(E) \subset E'$ for every $f \in \mathcal{F}$. As is standard, an \emph{equibounded family} refers to any family that is equibounded in space. If $Y$ is compact, this holds for any family. Finally, a family $\mathcal{F}$ is \emph{normal} if every sequence has a subsequence which converges locally uniformly. We recall that by the Arzelà--Ascoli theorem, when $X$ and $Y$ are separable and locally compact metric spaces, normality is equivalent to the family being equicontinuous and equibounded in space. 

\begin{corollary}\label{corollary-normal-family}
    Let $\Omega \subset \R^n$ be an open set, and let \((N,\omega)\) be an \(n\)-calibrated \(m\)-manifold with an injectivity radius lower bound $2\delta$, a sectional curvature upper bound $( \pi/(2\delta) )^2$, and H\"older data \(E_N\) and \(\beta\) for $K \geq 1$. Let $\mathcal{F}$ be an equibounded family of $K$-quasiregular curves $\Omega \to ( N, \omega )$.   
    Then the following are equivalent:
    \begin{enumerate}
        \item $\mathcal{F}$ is normal;
        \item for every $x \in \Omega$, there exists $r > 0$ such that $\sup_{F \in \mathcal{F}} E( F|_{ \mathbb{B}(x,r) } ) \leq E_N$;
        \item for every compact $E \subset \Omega$, there exists a constant $M > 0$ such that
    \begin{align*}
         d( F(x), F(y) ) \leq M |x-y|^{ \beta }
    \end{align*}
    for every $x, y \in E$ and every $F \in \mathcal{F}$; and 
    \item the family of Radon measures $\{ \star F^{*}\omega \mid F \in \mathcal{F} \}$ is weak-\(\star\) precompact and its weak-\(\star\) closure contains only non-atomic Radon measures.
    \end{enumerate}
\end{corollary}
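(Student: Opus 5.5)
The plan is to reduce everything to \Cref{theorem-equicontinuous-family} together with the Arzelà--Ascoli theorem. Since $\Omega \subset \R^n$ is open and $N$ is a Riemannian manifold, both are separable and locally compact metric spaces. Moreover, $\mathcal{F}$ is equibounded by hypothesis. Hence normality of $\mathcal{F}$ is equivalent to equicontinuity of $\mathcal{F}$, so (1) is equivalent to condition (1) of \Cref{theorem-equicontinuous-family}.

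The remaining conditions match the theorem directly. Conditions (2), (3) and (4) of the corollary are, verbatim, conditions (3), (2) and (5) of \Cref{theorem-equicontinuous-family}, respectively. That theorem asserts that all of its conditions are equivalent, without any equiboundedness assumption. Consequently (2), (3) and (4) are each equivalent to equicontinuity, and hence to (1).

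I would spell out only one point in detail: the direction `normal $\Rightarrow$ equicontinuous' in Arzelà--Ascoli. The family $\mathcal{F}$ need not be countable, so I would argue by contradiction. Suppose $\mathcal{F}$ fails to be equicontinuous at some $x$, for some $\eta > 0$. Then there exist $F_j \in \mathcal{F}$ and points $y_j, z_j \to x$ with $d(F_j(y_j), F_j(z_j)) \geq \eta$. Normality gives a subsequence converging locally uniformly to a continuous limit, and this limit contradicts the separation. Conversely, equicontinuity combined with equiboundedness yields, by the Arzelà--Ascoli theorem applied on an exhaustion of $\Omega$ by compact sets and a diagonal argument, a locally uniformly convergent subsequence of any sequence.

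I do not expect a genuine obstacle, since the real work is contained in \Cref{theorem-equicontinuous-family}. Equiboundedness is used only in the implication `equicontinuous $\Rightarrow$ normal', to guarantee that images of compact sets stay in a fixed compact set of $N$. The plan does not need it for the limit being a quasiregular curve; if desired, \Cref{corollary-convergence-of-pullbacks} shows that limits remain $K$-quasiregular, but this is not required for the statement.
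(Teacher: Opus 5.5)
Your proposal is correct and matches the paper's argument. The paper gives no separate proof: it invokes Arzel\`a--Ascoli to identify normality with equicontinuity for equibounded families, and then reads conditions (2)--(4) off from \Cref{theorem-equicontinuous-family}, exactly as you do, including your correspondence of (2), (3), (4) with conditions (3), (2), (5) of that theorem.
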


    \begin{remark}
        When an equibounded family $\mathcal{F}$ of $K$-quasiregular curves is not equicontinuous,  condition (4) of \Cref{corollary-normal-family} can fail in two ways. Either the family $\{ \star F^{*}\omega \mid F \in \mathcal{F} \}$ is not weak-$\star$ precompact, or it is weak-$\star$ precompact but its weak-$\star$ closure contains atomic measures. The latter situation is exactly the bubbling phenomenon analysed by Pankka and the second author in \cite{pankka-pim-2025-nodal-resolution-of-quasiregular-curves-via-bubble-trees}.
        Our rescaling principle, from the following section, can be applied in both situations.
    \end{remark}

    \begin{remark}[Pointwise equicontinuity, equiboundedness and normality]
    We say that a family of mappings $\mathcal{F} = \{ f \colon X \to Y \}$ is \emph{equicontinuous (or equibounded or normal respectively) at \(x \in X\)} if there exists an open neighbourhood \(U\) of \(x\) such that the restricted family \(\mathcal{F}|_U \coloneqq \Set{f|_U \mid f \in \mathcal{F}}\) is equicontinuous (or equibounded or normal respectively). Note that \(\mathcal{F}\) being equibounded at \(x\) is equivalent to there existing a neighbourhood \(U\) of \(x\) for which \(\mathcal{F}|_U\) is a bounded family.
    
    The results in this and the following section are stated for ``globally'' equicontinuous (or equibounded or normal) families. However, these results are essentially local in the sense that pointwise versions may be readily obtained by restricting the family to a neighbourhood of a point where those aforementioned properties hold.
    \end{remark}

\section{Miniowitz--Zalcman rescaling principles for quasiregular curves}\label{section-miniowitz-zalcman-rescaling-principle-for-qr-curves}

In this section, we prove \Cref{theorem-rescaling-principle,theorem-qr-ellipticity-equivalent-to-normality} as consequences of our rescaling principles for non-equicontinuous and non-normal families of quasiregular curves.
Though the standard proof of the Miniowitz--Zalcman rescaling principle, based on rescaling at the points where the Hölder semi-norm blows up (see for example \cite[Lemma~1]{miniowitz-1982-normal-families-of-quasimeromorphic-mappings} or \cite[Theorem~19.7.3]{Iwaniec-Martin-Geometric-Function-Theory-Non-Linear-Analysis}), may be straightforwardly adapted to our setting, we prefer an energy based method as discussed in the introduction. This highlights the strong interactions between quasiregular curves and the isoperimetric profile of the target as utilized in the previous sections and in \cite{ikonen-2025-entire-conformal-curves,ikonen-2026-pushforward-of-currents-under-sobolev-maps,ikonen-2025-quasiregular-curves-removability-of-singularities,ikonen-pankka-2024-liouvilles-theorem-in-calibrated-geometries,pankka-pim-2025-nodal-resolution-of-quasiregular-curves-via-bubble-trees}.

To setup the statements, we introduce the following definition.
\begin{definition}\label{definition-zalcman-sequence}
Let \(\Omega \subset \R^n\) be an open set, and let \((N,\omega)\) be an \(n\)-calibrated  \(m\)-manifold.
Let $\mathcal{F}$ be a family of $K$-quasiregular curves $\Omega \to (N,\omega)$ for \(K \geq 1\). 
We say that \((G_j)_j\) is a \emph{Zalcman sequence for \(\mathcal{F}\) at \(x \in \Omega\)} if $G_j(y) \coloneqq F_j( x_j + \rho_j y )$ for sequences \((F_j)_j\) in \(\mathcal{F}\), $\rho_j \rightarrow 0^{+}$ in $(0,\infty)$, and $x_j \rightarrow x$ in $\Omega$.
\end{definition}
Observe that \(\rho_j \to 0\) and \(x_j \to x\) imply that the domains of \(G_j\) converge to \(\R^n\) as \(j \to \infty\). Additionally, we say that \((\rho_j)_j\), \((x_j)_j\), and \((F_j)_j\) are the \emph{sequences corresponding} to \((G_j)_j\).

We state a stronger version of \Cref{theorem-rescaling-principle} for non-compact targets:

\begin{theorem}\label{theorem-rescaling-principle-equiboundedness-build-in}
    Let \(M\) be an oriented \(n\)-manifold, and let \((N,\omega)\) be an \(n\)-calibrated \(m\)-manifold with an injectivity radius lower bound $2\delta$ and a sectional curvature upper bound $( \pi/(2\delta) )^2$. Let $\mathcal{F}$ be an equibounded family of $K$-quasiregular curves $M \to (N,\omega)$ for \(K \geq 1\).
    
    If $\mathcal{F}$ is not normal at $x \in M$, then there exists a non-constant bounded $K$-quasiregular curve $\R^n \to ( N, \omega )$ that is uniformly Hölder continuous. Moreover, if every element of $\mathcal{F}$ maps into a closed set $C \subset N$, so does the entire map.
\end{theorem}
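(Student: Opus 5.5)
First I reduce to a Euclidean domain. Fix a chart around $x$, say the exponential map $\exp_x$ restricted to a small ball, so that $\mathcal{F}$ becomes a family of quasiregular curves on a ball $\mathbb{B}(x,r_0)\subset\R^n$. The chart is only bi-Lipschitz, so the distortion constant changes slightly. To avoid this, I will carry the rescaling out in normal coordinates and note that the rescaled metrics converge to the flat one; the limit is then $K$-quasiregular by \Cref{corollary-convergence-of-pullbacks} together with a routine perturbation argument. The family is equibounded and not normal at $x$. By \Cref{corollary-normal-family} (2), for every $r>0$ we then have $\sup_F E(F|_{\mathbb{B}(x,r)}) > E_N$. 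Choose $F_j\in\mathcal{F}$ with $E(F_j|_{\mathbb{B}(x,1/j)})>E_N$.

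Next I rescale at the energy-concentration scale. Fix $\epsilon\in(0,E_N]$, say $\epsilon=E_N/2$. For each $j$ and each $y\in\overline{\mathbb{B}}(x,r_0/2)$, define
\[
\rho_j(y)=\sup\{\rho\le d(y,\partial\mathbb{B}(x,r_0))/2 : E(F_j|_{\mathbb{B}(y,\rho)})\le \epsilon\}.
\]
Because $F_j\in W^{1,n}$, this quantity is positive. I then apply a Zalcman-type maximization lemma to obtain points $x_j\to x$ with radii $\rho_j\to0$. The functional I maximize is
\[
(r_0/2-|y-x|)/\rho_j(y)
\]
over the closed ball; this is the analogue of the Hölder-seminorm maximization used in the classical proof. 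Let $y$ be the maximizer. It has the following property: for every $z$ with $|z-x_j|\le R\rho_j$, one has $\rho_j(z)\ge\rho_j/2$ once $j$ is large relative to $R$. Hence every ball of radius $\rho_j/2$ in $\mathbb{B}(x_j,R\rho_j)$ carries energy at most $\epsilon$, while $\mathbb{B}(x_j,\rho_j)$ carries energy exactly $\epsilon$ by continuity of the energy in the radius. The energy concentration near $x$ is what forces $\rho_j\to0$ and $x_j\to x$. Indeed, $\rho_j(x)\le1/j$, so the maximized ratio tends to infinity, which gives both $\rho_j\to 0$ and $|x_j-x|\to0$.

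Now set $G_j(z)=F_j(x_j+\rho_j z)$. Since $E_n$ is conformally invariant, every ball of radius $1/2$ in $\mathbb{B}(0,R)$ has $G_j$-energy at most $\epsilon\le E_N$. By \Cref{theorem-equicontinuous-family} (3) and (2), the $G_j$ are then locally uniformly $\beta$-Hölder with constant $C(n,\beta)\epsilon^{1/n}$ at scale $1/2$, and this constant is independent of $R$. Equiboundedness keeps $G_j(0)=F_j(x_j)$ in a fixed compact set. Arzelà--Ascoli and a diagonal argument then give a subsequence converging locally uniformly to some $G\colon\R^n\to N$. By \Cref{corollary-convergence-of-pullbacks}, $G$ is $K$-quasiregular. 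It inherits the uniform Hölder bound, so it is uniformly Hölder continuous. Its image lies in the compact set obtained from equiboundedness, so $G$ is bounded, and it maps into $C$ because $C$ is closed.

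The main obstacle is non-constancy. Energy is only lower semicontinuous, so the energy $\epsilon$ on $\mathbb{B}(0,1)$ might a priori escape in the limit. I will exclude this using the convergence of $\int_V G_j^*\omega$ in \Cref{corollary-convergence-of-pullbacks}. Uniform local energy smallness at scale $1/2$ makes the family $\{G_j\}$ equicontinuous on $\mathbb{B}(0,2)$, so that corollary applies there. Consequently
\[
\int_{\mathbb{B}(0,1)}G^*\omega=\lim_j\int_{\mathbb{B}(0,1)}G_j^*\omega\ge \frac{c}{K}\epsilon>0,
\]
and therefore $G$ is non-constant.
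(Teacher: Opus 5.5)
Your route is the paper's. You pass to $\exp_x$-coordinates, rescale at the radius where a fixed amount of energy concentrates via a Zalcman-type maximization of (distance to the boundary)/(concentration radius), and get uniform Hölder bounds from the small-energy estimate (\Cref{theorem-modulus-of-continuity-estimates}). You then extract a limit by Arzel\`a--Ascoli and get non-constancy from the weak convergence of pullbacks (\Cref{corollary-convergence-of-pullbacks}); compare \Cref{prop:energy-rescale-principle-equicont} and \Cref{prop:energy-rescale-principle-normal}.

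There is, however, one false inference. You maximize over the fixed ball $\overline{\mathbb{B}}(x,r_0/2)$, so the divergence of the maximal ratio only forces $\rho_j\to0$; it does not force $x_j\to x$. Suppose the $F_j$ also concentrate energy at some $y_0\neq x$ at a finer scale, e.g.\ rational maps into $\CP^1$ with a bubble of size $1/j$ at $x$ and one of size $1/j^2$ at $y_0$. Then the (near-)maximizers sit near $y_0$. This matters for your handling of the distortion. For a subsequential limit $x_\infty$ of $(x_j)_j$, the rescaled metrics $(\exp_x^*g_M)(x_j+\rho_j z)$ converge to the constant metric $(\exp_x^*g_M)(x_\infty)$. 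That is the Euclidean metric only when $x_\infty$ is the origin, so your limit is a priori $K$-quasiregular only with respect to that constant metric, not the Euclidean one your argument assumes.

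For the statement at hand this is easily repaired, in either of two ways.
\begin{itemize}
\item Precompose the limit with an orientation-preserving linear isometry from Euclidean $\R^n$ onto $\R^n$ carrying that constant metric. This preserves boundedness, non-constancy and uniform Hölder continuity.
\item Localize as the paper does. Pick $\widehat F_j$ with $E(\widehat F_j|_{\mathbb{B}(x,j^{-2})})>E_N$, set $F_j(z)=\widehat F_j(x+z/j)$, and maximize over $\mathbb{B}(1/2)$. The centers in the original coordinates then lie within $1/(2j)$ of $x$, and you get a genuine Zalcman sequence at $x$. This is what the more precise \Cref{theorem-rescaling-principle} requires.
\end{itemize}

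A minor point as well: $y\mapsto\rho_j(y)$ is only upper semicontinuous, so your ratio need not attain its supremum. Take a point where the ratio is at least half the supremum instead. This yields $\rho_j(z)\ge\rho_j/4$ rather than $\rho_j/2$, exactly as in the paper, and the rest of your argument goes through unchanged.
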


This leads to one of the main applications of our rescaling principle, which is a characterization, in terms of normal families, of calibrated manifolds \((N,\omega)\) which satisfy Liouville's theorem for bounded curves: \emph{every bounded entire quasiregular curve into \((N,\omega)\) is constant}; cf. \cite[Theorem 1.7]{pankka-quasiregular-curves}.
\Cref{theorem-qr-ellipticity-equivalent-to-normality} is the special case of this characterization for compact targets.

\begin{theorem}\label{theorem-liouville-property-equivalent-normality}
    Let \((N,\omega)\) be an \(n\)-calibrated  \(m\)-manifold with an injectivity radius lower bound \(2\delta\) and a sectional curvature upper bound \((\pi/(2\delta))^2\), and $K \geq 1$. Then the following are equivalent.
    \begin{enumerate}
        \item every bounded $K$-quasiregular curve $\R^n \to (N,\omega)$ is constant;

        \item every equibounded family of $K$-quasiregular curves $M \to (N,\omega)$ is a normal family, for each oriented $n$-manifold $M$; 
        
        \item every equibounded family of $K$-quasiregular curves $\R^n \supset \mathbb{B} \to (N,\omega)$ is normal.
    \end{enumerate}
\end{theorem}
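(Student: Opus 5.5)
The plan is to prove the cycle of implications (1) $\Rightarrow$ (2) $\Rightarrow$ (3) $\Rightarrow$ (1). Each step is short once \Cref{theorem-rescaling-principle-equiboundedness-build-in} is available.

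For (1) $\Rightarrow$ (2), argue by contraposition. Suppose some equibounded family $\mathcal{F}$ of $K$-quasiregular curves $M \to (N,\omega)$ fails to be normal. Normality is a local property: by Arzelà--Ascoli and a diagonal argument over a countable exhaustion, a family that is normal at every point is normal. Hence $\mathcal{F}$ is not normal at some point $x \in M$. \Cref{theorem-rescaling-principle-equiboundedness-build-in} then produces a non-constant bounded $K$-quasiregular curve $\R^n \to (N,\omega)$, which contradicts (1).

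The implication (2) $\Rightarrow$ (3) is immediate: take $M = \mathbb{B}$ with its standard orientation and Euclidean metric.

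For (3) $\Rightarrow$ (1), use a rescaling argument, again by contraposition. Let $G \colon \R^n \to (N,\omega)$ be a non-constant bounded $K$-quasiregular curve, and set $F_j(z) = G(jz)$ for $z \in \mathbb{B}$.
\begin{itemize}
    \item Since precomposition with a dilation preserves the distortion inequality \eqref{equation-quasiregular-curve-distortion-inequality}, each $F_j$ is a $K$-quasiregular curve.
    \item Since $G$ is bounded and $N$ is complete, $\overline{G(\R^n)}$ is compact. So $\{F_j\}$ is equibounded.
    \item Pick $a, b \in \R^n$ with $G(a) \neq G(b)$. Then $F_j(a/j) = G(a)$ and $F_j(b/j) = G(b)$, while $|a/j - b/j| \to 0$ and both points converge to $0$.
    \item Consequently $\{F_j\}$ is not equicontinuous at $0$, so no subsequence converges locally uniformly near $0$. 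This contradicts (3).
\end{itemize}
Alternatively, one can use energy: $E(F_j|_{\mathbb{B}(r)}) = E(G|_{\mathbb{B}(jr)})$, which eventually exceeds $E_N$, because non-constancy gives positive energy on some ball. Either way, \Cref{corollary-normal-family} rules out normality.

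I expect no real obstacle here, since all the difficulty sits in \Cref{theorem-rescaling-principle-equiboundedness-build-in}. The one point that needs a line of care is the reduction from ``not normal'' to ``not normal at some point'' in (1) $\Rightarrow$ (2). Since $M$ is a manifold rather than a subset of $\R^n$, this is best handled in charts. Equiboundedness together with pointwise equicontinuity gives global normality via Arzelà--Ascoli, as recalled before \Cref{corollary-normal-family}. And pointwise normality implies pointwise equicontinuity, because a family that is not equicontinuous at a point has a sequence with no equicontinuous subsequence.
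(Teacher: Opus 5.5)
Your proposal is correct and follows the paper's proof. (1)$\Rightarrow$(2)$\Rightarrow$(3) comes from \Cref{theorem-rescaling-principle-equiboundedness-build-in}, and your reduction from ``not normal'' to ``not normal at some point'' spells out a step the paper leaves implicit. (3)$\Rightarrow$(1) uses the same family $F_j(z)=G(jz)$. The paper gets non-normality at $0$ from \Cref{theorem-energy-rescaling-principle} applied to the Zalcman sequence $F_j(z/j)=G$, which is just a repackaging of your observation $F_j(a/j)=G(a)\neq G(b)=F_j(b/j)$.

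One caveat concerns only your energy alternative. Since $E(F_j|_{\mathbb{B}(r)})=E(G|_{\mathbb{B}(jr)})$ increases to $E(G)$, positivity of the energy is not enough; you need $E(G)>E_N$. This holds by the energy gap $E(G)\geq M'\geq 2E_N$ of \Cref{corollary-homological-non-triviality}.
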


Both of these results are a consequences of the following contrapositive equicontinuity and normality criteria, which add to those found in \Cref{theorem-equicontinuous-family} and \Cref{corollary-normal-family}. They are based on the existence of Zalcman sequences with certain stronger properties.

\begin{theorem}\label{theorem-energy-rescaling-principle}
    Let $\Omega \subset \R^n$ be an open set, and let \((N,\omega)\) be an \(n\)-calibrated  \(m\)-manifold with an injectivity radius lower bound $2\delta$ and a sectional curvature upper bound $( \pi/(2\delta) )^2$. Let \(\mathcal{F}\) be a family of \(K\)-quasiregular curves \(\Omega \to (N,\omega)\) for $K \geq 1$. Then the following are equivalent:
\begin{enumerate}
    \item \(\mathcal{F}\) is not equicontinuous at \(x \in \Omega\);

    \item there exists a Zalcman sequence \((G_j)_j\) at \(x \in \Omega\) which is equicontinuous and satisfies both 
    \[
    \liminf_{j \to \infty} E(G_j|_{\mathbb{B}(0,1)}) \geq E_N \quad \text{and} \quad \limsup_{j \to \infty} E( G_j|_{\mathbb{B}(y,1/4)} ) \leq E_N 
    \]
    for every $y \in \R^n$.
\end{enumerate}
    Moreover, when \(\mathcal{F}\) is equibounded in space, the following are equivalent:
    \begin{enumerate}
        \setcounter{enumi}{2}
        \item \(\mathcal{F}\) is not normal at \(x \in \Omega\);
        \item there exists a Zalcman sequence \((G_j)_j\) at \(x \in \Omega\) which converges locally uniformly to a bounded non-constant \(K\)-quasiregular curve \(G\colon \R^n \to (N,\omega)\).
    \end{enumerate}
\end{theorem}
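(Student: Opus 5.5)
The plan is to prove the implications (2) $\Rightarrow$ (1), (1) $\Rightarrow$ (2), (4) $\Rightarrow$ (3), and (3) $\Rightarrow$ (2) $\Rightarrow$ (4) under equiboundedness. The backward implications are direct. For (2) $\Rightarrow$ (1), the change of variables $z = x_j + \rho_j y$ preserves the conformally invariant energy:
\begin{align*}
    E(G_j|_{\mathbb{B}(0,1)}) = E(F_j|_{\mathbb{B}(x_j,\rho_j)}).
\end{align*}
Since $\mathbb{B}(x_j,\rho_j) \subset \mathbb{B}(x,r)$ for large $j$, we get $\limsup_j \sup_{F\in\mathcal{F}} E(F|_{\mathbb{B}(x,r)}) \geq E_N$ for every $r>0$. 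This contradicts condition (4) of \Cref{theorem-equicontinuous-family}, applied on a neighbourhood of $x$, with $\eta < E_N$. For (4) $\Rightarrow$ (3), note that a non-constant limit $G$ has $E(G|_{\mathbb{B}(0,R)}) > 0$ for some $R$. By \Cref{corollary-convergence-of-pullbacks} (lower semicontinuity), $E(F_j|_{\mathbb{B}(x_j,R\rho_j)}) \geq \eta_0 > 0$ for all large $j$. Hence the energy on every ball around $x$ does not become uniformly small, so $\mathcal{F}$ is not equicontinuous at $x$ by \Cref{theorem-equicontinuous-family}, and therefore not normal.

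The main step is (1) $\Rightarrow$ (2), which I would prove by an energy-concentration rescaling. By \Cref{theorem-equicontinuous-family} (3) localized at $x$, for every $r>0$ there is $F \in \mathcal{F}$ with $E(F|_{\mathbb{B}(x,r)}) > E_N$. Fix $r_j \to 0$ with $\overline{\mathbb{B}}(x,2r_j) \subset \Omega$, and choose $F_j$ with $E(F_j|_{\mathbb{B}(x,r_j)}) > E_N$. Consider on $\overline{\mathbb{B}}(x,r_j)$ the concentration radius
\begin{align*}
    \sigma_j(z) = \inf\{ s > 0 : E(F_j|_{\mathbb{B}(z,s)}) \geq E_N \}.
\end{align*}
This is continuous in $z$ (energy is absolutely continuous) and positive, and it vanishes nowhere. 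To obtain both a lower bound at the centre and an upper bound everywhere, I would use a Zalcman/Brody type weighted maximization. Maximize $\phi_j(z) = (r_j - |z-x|)/\sigma_j(z)$ over $\overline{\mathbb{B}}(x,r_j)$; this is continuous, vanishes on the boundary, and at $z = x$ we have $\phi_j(x) = r_j/\sigma_j(x) \geq 1$ because $\sigma_j(x) < r_j$. Let $x_j$ be a maximizer and set $\rho_j = \sigma_j(x_j)$. Then $\rho_j \leq r_j - |x_j - x| \leq r_j$, so $x_j \to x$ and $\rho_j \to 0$.

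The lower bound then holds by the definition of $\rho_j$: $E(G_j|_{\mathbb{B}(0,1)}) = E(F_j|_{\mathbb{B}(x_j,\rho_j)}) \geq E_N$. For the upper bound, the issue is that $\phi_j(x_j)$ may remain bounded, so the ratio $\sigma_j(z)/\rho_j$ is only controlled up to a factor $2$. This is why I would use the radius $1/4$ rather than $1$. For $|z - x_j| \leq (r_j - |x_j-x|)/2$, maximality gives $\sigma_j(z) \geq \rho_j/2$, that is, $E(F_j|_{\mathbb{B}(z,\rho_j/4)}) \leq E_N$. In rescaled coordinates this says $E(G_j|_{\mathbb{B}(y,1/4)}) \leq E_N$ for $|y| \leq \phi_j(x_j)/2$.

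The main obstacle is ensuring that the region of validity exhausts $\R^n$, which requires $\phi_j(x_j) \to \infty$. I would secure this by choosing $F_j$ so that $\sigma_j(x) \leq r_j/j$; this is possible since the concentration at $x$ happens at arbitrarily small scales. Concretely, first pick $s_j = r_j/j$ and then $F_j$ with $E(F_j|_{\mathbb{B}(x,s_j)}) > E_N$. Then $\phi_j(x_j) \geq \phi_j(x) \geq j$, and the upper bound holds for every $y$ once $j$ is large. Equicontinuity of $(G_j)_j$ follows from \Cref{theorem-modulus-of-continuity-estimates} applied on balls $\mathbb{B}(y,1/8)$ with energy at most $E_N$ (shrinking radii if necessary).

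For (3) $\Rightarrow$ (4): since $\mathcal{F}$ is equibounded, non-normality at $x$ is equivalent to non-equicontinuity at $x$ by Arzelà--Ascoli, so we obtain the sequence from (2). The $G_j$ are equibounded because they take values in the compact sets bounding $F_j$ near $x$, and they are equicontinuous, so a subsequence converges locally uniformly to some $G$. By \Cref{corollary-convergence-of-pullbacks}, $G$ is $K$-quasiregular and bounded. For non-constancy I would not rely on energy lower semicontinuity, which points the wrong way. Instead, by \Cref{corollary-convergence-of-pullbacks},
\begin{align*}
    \int_{\mathbb{B}(0,1)} G_j^{*}\omega \to \int_{\mathbb{B}(0,1)} G^{*}\omega,
\end{align*}
and the left-hand side is at least $(c/K) E(G_j|_{\mathbb{B}(0,1)}) \geq (c/K)E_N$. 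Thus $G^{*}\omega \neq 0$, and $G$ is non-constant.
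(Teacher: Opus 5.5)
Your proposal is correct and follows essentially the paper's route. The implication (1) $\Rightarrow$ (2) is the same Zalcman-type weighted maximization of distance-to-boundary divided by energy-concentration radius. The converses are the content of \Cref{lem:zalcman-seq-at-normal-point}: the energy of a Zalcman sequence vanishes at an equicontinuity point, combined with lower semicontinuity. Non-constancy of the limit is obtained exactly as in the paper, via $\int_{\mathbb{B}} G_j^{*}\omega \to \int_{\mathbb{B}} G^{*}\omega$.

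The differences are cosmetic. You take an exact maximizer, which is legitimate because $\sigma_j$ is in fact $1$-Lipschitz by monotonicity of energy under inclusion of balls. You also build in the scale separation by choosing $F_j$ to concentrate in $\mathbb{B}(x,r_j/j)$. The paper instead uses an almost-maximizer (within a factor $2$) and the rescaled maps $\widehat{F}_j(j^{-1}\cdot)$.
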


\begin{remark}
As in \cite{miniowitz-1982-normal-families-of-quasimeromorphic-mappings} and \cite{zalcman-1975-a-heuristic-principle-in-complex-function-theory}, our rescaling principle \Cref{theorem-energy-rescaling-principle} may be restated to immediately give a ``\emph{rigorous Bloch principle}'' for quasiregular curves. See \cite{bergweiler-2006-blochs-principle} for applications and historical discussion. See also \cite[Section $\text{6.42}_{+}$]{gromov-2007-metric-structures-for-riemannian-and-non-riemannian-spaces}.
\end{remark}

Most of the section is spent on proving \Cref{theorem-energy-rescaling-principle}. However, to emphasize its utility, we first present the proofs of the other two theorems stated above.
\begin{proof}[Proof of \Cref{theorem-rescaling-principle-equiboundedness-build-in}]
    For every $K' > K$, there exists an open neighbourhood $V$ of the origin in $T_xM$ such that $F \circ \mathrm{exp}_{x}|_{V} \colon V \to ( N, \omega )$ is a $K'$-quasiregular curve for any $K$-quasiregular curve $F \colon M \to ( N, \omega )$. This follows from the local bi-Lipschitz bounds of the exponential map. Now, the family \(\mathcal{F}' = \{ F \circ \mathrm{exp}_{x}|_{V}  \mid F \in \mathcal{F} \}\) consists of $K'$-quasiregular curves \(V \to ( N, \omega )\). For a sufficiently small $V$, the family is equibounded in space and clearly fails to be normal at the origin. A Zalcman sequence $( G_j )_j$ which converges locally uniformly to a bounded and non-constant $K'$-quasiregular curve $\R^n \to ( N, \omega )$ is obtained from \Cref{theorem-energy-rescaling-principle}. As the exponential map is asymptotically an isometry, the limit is a $K$-quasiregular curve. The conclusion on the image of the map is also clear.
\end{proof}

\begin{proof}[Proof of \Cref{theorem-liouville-property-equivalent-normality}]
     Suppose there exists an oriented $n$-manifold \(M\) and an equibounded family of $K$-quasiregular curves $M \to (N,\omega)$ which is not normal. \Cref{theorem-rescaling-principle-equiboundedness-build-in} implies that there exists a non-constant bounded $K$-quasiregular curve $\R^n \to ( N, \omega )$. Hence '$(1) \Rightarrow (2) \Rightarrow (3)$' follows.  

     Regarding '$(3) \Rightarrow (1)$', we argue as follows. Suppose there exists a bounded and non-constant \(K\)-quasiregular curve \(F \colon \R^n \to (N,\omega)\).  Consider the family \((F_j \colon \mathbb{B} \to (N,\omega))_j\) of \(K\)-quasiregular curves given by \(F_j(y) \coloneqq F(j y)\). This family is bounded since \(F\) is bounded. 
     Now set \(\rho_j = 1/j\), \(x_j = x = 0\) and \(G_j(z) = F_j(x_j + \rho_j z) = F(j \rho_j z) = F(z)\). Thus \((G_j)_j\) is a Zalcman sequence at \(0\) which converges locally uniformly to \(F\). Hence \Cref{theorem-energy-rescaling-principle} implies that \((F_j)_j\) is not normal at \(0 \in \mathbb{B}\). This completes the proof.
\end{proof}

\Cref{theorem-energy-rescaling-principle} is a corollary of our rescaling principles: \Cref{prop:energy-rescale-principle-equicont}, \Cref{prop:energy-rescale-principle-normal}, and a technical lemma. We have decomposed our rescaling principle in this manner in order both to emphasize the role of equicontinuity and to highlight that we may obtain finer control than \Cref{theorem-energy-rescaling-principle} suggests.

\begin{proposition}\label{prop:energy-rescale-principle-equicont}
    Let \(\Omega \subset \R^n\) be an open set, and let \((N,\omega)\) be an \(n\)-calibrated  \(m\)-manifold with an injectivity radius lower bound $2\delta$ and a sectional curvature upper bound $( \pi/(2\delta) )^2$. Let \(\mathcal{F}\) be a family of \(K\)-quasiregular curves \(\Omega \to (N,\omega)\) for $K \geq 1$.

    If \(\mathcal{F}\) is not equicontinuous at \(x \in \Omega\), then there exists a Zalcman sequence \((G_j)_j\) for \(\mathcal{F}\) at \(x\) and radii $( R_j )_j$ diverging to infinity which satisfy 
    \[
        E(G_j|_{\mathbb{B}}) \geq E_N \quad \text{and} \quad \sup_{ |y| \leq R_j/2 } E( G_j|_{\mathbb{B}(y,1/4)} ) \leq E_N 
    \]
    for all \(j \in \N\). In particular, $( G_j )_j$ is equicontinuous.
\end{proposition}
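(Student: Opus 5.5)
The plan is an energy-concentration selection argument of Zalcman type, replacing the usual Hölder seminorm by the energy of small balls.

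\textbf{Step 1: extract concentrating curves.} Since $\mathcal{F}$ is not equicontinuous at $x$, the equivalence `(1) $\Leftrightarrow$ (3)' of \Cref{theorem-equicontinuous-family}, applied on small balls around $x$, shows that for every $r>0$ there is $F \in \mathcal{F}$ with $E(F|_{\mathbb{B}(x,r)}) > E_N$. Fix $r_0$ with $\overline{\mathbb{B}}(x,2r_0) \subset \Omega$ and choose $F_j \in \mathcal{F}$ with $E(F_j|_{\mathbb{B}(x,r_j)}) > E_N$, where $r_j \to 0$ and $r_j < r_0$.

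\textbf{Step 2: weighted radius function.} For fixed $j$ and $y \in \overline{\mathbb{B}}(x,r_j)$, set
\[
\sigma_j(y) = \inf\{ s>0 : E(F_j|_{\mathbb{B}(y,s)}) \geq E_N \}.
\]
This is the radius at which energy $E_N$ first accumulates around $y$. Absolute continuity of the energy gives $\sigma_j(y)>0$, and $\sigma_j$ is $1$-Lipschitz because $\mathbb{B}(y,s)\subset \mathbb{B}(y',s+|y-y'|)$. Hence the function
\[
\Phi_j(y)=\frac{r_j-|y-x|}{\sigma_j(y)}
\]
is continuous on the closed ball, vanishes on its boundary, and attains a maximum at some $x_j$.

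The maximum is large: $\Phi_j(x)\ge r_j/\sigma_j(x)$ and $\sigma_j(x)\le r_j$, so $\Phi_j(x_j) \geq 1$. This is too weak on its own. To make the maximum diverge, I would shrink to scale $\epsilon_j r_j$. If $\sigma_j(x)/r_j \not\to 0$ along a subsequence, I would first replace $r_j$ by a smaller radius $r'_j \to 0$ for which $E(F_j|_{\mathbb{B}(x,r'_j)})$ is still large. The cleaner route is to exploit non-equicontinuity directly: for each fixed $r$, $\inf_{F}\sigma_F(x')$ over points $x'$ near $x$ tends to $0$. So I would choose $F_j$ and radii $r_j$ so that $\sigma_j(x) \le r_j/j$, which forces $\Phi_j(x_j)\ge j$.

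\textbf{Step 3: rescale.} Put $\rho_j = \sigma_j(x_j)$ and $R_j = \Phi_j(x_j) \to \infty$. Then $x_j \to x$ and $\rho_j \le r_j/R_j \to 0$. Set $G_j(z) = F_j(x_j+\rho_j z)$.

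The energy $E=E_n$ is conformally invariant under rescaling. By the definition of $\sigma_j$ and continuity, this gives $E(G_j|_{\mathbb{B}}) \geq E_N$.

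For $|z| \le R_j/2$, the point $y=x_j+\rho_j z$ satisfies $|y-x| \le |x_j-x| + (r_j-|x_j-x|)/2$. Hence $r_j-|y-x| \geq (r_j-|x_j-x|)/2$, and maximality of $\Phi_j$ at $x_j$ yields $\sigma_j(y) \geq \rho_j/2$. Therefore $E(F_j|_{\mathbb{B}(y,\rho_j/2)})\le E_N$, which transfers to $E(G_j|_{\mathbb{B}(z,1/2)}) \le E_N$ and a fortiori to the ball of radius $1/4$.

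\textbf{Step 4: equicontinuity.} Any compact set lies in $\mathbb{B}(R_j/2)$ for large $j$. Condition (3) of \Cref{theorem-equicontinuous-family} then holds for the tail of $(G_j)_j$, and finitely many continuous maps are trivially equicontinuous.

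The main obstacle is Step 2: arranging $R_j\to\infty$, rather than merely $R_j \geq 1$, through a careful choice of $F_j$ and $r_j$ from non-equicontinuity. Handling the strict versus non-strict inequalities at the threshold $E_N$ also needs care; an infimum or continuity argument should suffice for that.
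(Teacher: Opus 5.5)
Your proposal is correct and follows essentially the paper's argument. Both use the first radius at which energy $E_N$ accumulates, weighted by the distance to the boundary of a window, then take a (near-)maximizer and rescale. In both, $R_j\to\infty$ is forced by choosing $F_j$ that concentrates energy $E_N$ on a ball much smaller than the window. The paper takes $\mathbb{B}(j^{-2})$ and rescales by $j$ to the fixed window $\mathbb{B}(1/2)$, whereas you shrink the window to $\overline{\mathbb{B}}(x,r_j)$, so $x_j\to x$ automatically.

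The step you flagged as the main obstacle is immediate from Step 1 applied at radius $r_j/j$. Your observation that $\sigma_j$ is $1$-Lipschitz gives an exact maximizer, a minor refinement that even yields the energy bound on balls of radius $1/2$ instead of $1/4$.
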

\begin{proof}
    Without loss of generality, we may assume \(\Omega = \mathbb{B} \subset \R^n\) and that \(\mathcal{F}\) is not equicontinuous at \(x = 0 \in \mathbb{B}\). Since \(\mathcal{F}\) is not equicontinuous at \(0\), \Cref{theorem-equicontinuous-family} implies that there exists a sequence \((\widehat{F}_j)_j \subset \mathcal{F}\) such that
    \begin{align*}
        E( \widehat{F}_j|_{ \mathbb{B}(j^{-2}) } ) > E_N \quad \text{for each}\quad j\in \N.
    \end{align*}
    Define $F_j \colon \mathbb{B} \to ( N, \omega )$ by $z \mapsto \widehat{F}_j(j^{-1}z)$, and notice that
    \begin{align}\label{eq:energy-rescale-blowup}
        E( F_j|_{ \mathbb{B}(j^{-1}) } ) > E_N \quad \text{for each}\quad j\in \N.
    \end{align}
    
    For \(x \in \mathbb{B}(1/2)\), define
    \begin{equation}\label{eq:energy-rescale-parameter-rho-and-lambda-j}
        \hat\rho_j(x) \coloneqq \inf\Set{ 1/2 \geq \rho > 0 \mid E(F_j|_{\mathbb{B}(x,\rho)}) \geq E_N }, \quad 
        \lambda_j(x) \coloneqq \frac{1/2 - \abs{x}}{\hat\rho_j(x)},
    \end{equation}
    and set \(\Lambda_j = \sup_{x \in \mathbb{B}(1/2)} \lambda_j(x)\). Let \((x_j)_j\) be a sequence in \(\mathbb{B}(1/2)\) which almost achieves \(\Lambda_j\), i.e. \(\lambda_j(x_j) \geq \Lambda_j/2\). Finally, set \(\rho_j = \hat\rho_j(x_j)\) and \(R_j = \lambda_j(x_j)\).
    
    Observe that \eqref{eq:energy-rescale-blowup} implies \(\hat\rho_j(0) \to 0\), and thus
    \[
    \Lambda_j = \sup_{x \in \mathbb{B}(1/2)} \lambda_j(x) \geq \lambda_j(0) = \frac{1/2}{\hat\rho_j(0)} \to \infty
    \]
    as \(j \to \infty\). Hence \(R_j \to \infty\) and \(\rho_j \to 0\). Define \(G_j(y) \coloneqq F_j(x_j + \rho_j y)\) for \(y \in \mathbb{B}(R_j)\). Since \(R_j \to \infty\), each \(y \in \R^n\) satisfies \(\abs{y} \leq R_j\) for all \(j\) large enough. Clearly each mapping \(G_j \colon \mathbb{B}(R_j) \to (N,\omega)\) is a \(K\)-quasiregular curve, and the construction gives that $( G_j )_j$ is a Zalcman sequence for $\mathcal{F}$ at $0$ with parameters $j^{-1}x_j \rightarrow 0$ and $j^{-1}\rho_j \rightarrow 0$.

    For the energy bounds for \((G_j)_j\), let $j \in \N$ and $y \in \R^n$ with $\abs{y} \leq R_j/2$.
    Observe that for \(z \in \mathbb{B}(\frac{1}{2})\),
    \[
        \hat\rho_j(z) = \frac{1/2 - \abs{z}}{\lambda_j(z)} \geq \frac{1/2 - \abs{z}}{2 \lambda_j(x_j)} = \frac{\rho_j}{2} \frac{1/2 - \abs{z}}{1/2 - \abs{x_j}},
    \]
    and that \(x_j + \rho_jy \in \mathbb{B}(\frac{1}{2})\) by the triangle inequality since
    \[
    \abs{\rho_jy} \leq  \frac{\rho_j R_j}{2} = \frac{1}{4} -  \frac{\abs{x_j}}{2}.
    \]
    Substituting \(z = x_j + \rho_j y\), we estimate that
    \begin{align*}
        \hat\rho_j(x_j + \rho_j y) &\geq \frac{\rho_j}{2} \frac{1/2 - \abs{x_j + \rho_j y}}{1/2 - \abs{x_j}} \geq \frac{\rho_j}{2} \left(\frac{1/2 - \abs{x_j}}{1/2 - \abs{x_j}} - \frac{\rho_j\abs{y}}{1/2 - \abs{x_j}}\right) \\
        &= \rho_j \left(\frac{1}{2} - \frac{\abs{y}}{2 R_j} \right)  \geq \frac{\rho_j}{4},
    \end{align*} 
    where we used that \(1/R_j = \rho_j/(1/2 - \abs{x_j})\) and \(\abs{y} \leq R_j/2\).
    Thus, by \eqref{eq:energy-rescale-parameter-rho-and-lambda-j}, 
    \begin{equation}\label{eq:energy-rescale-G-j-small-energy}
        E(G_j|_{\mathbb{B}(y,\frac{1}{4})}) = E\left (F_j|_{\mathbb{B}\left(z,  \frac{\rho_j}{4}\right)} \right) \leq E( F_j|_{ \mathbb{B}( z, \hat{\rho}_j(z) )} ) \leq E_N.
    \end{equation}
    As \(y\) was an arbitrary point with $\abs{y} \leq R_j/2$ and $\lim_{j} R_j = \infty$, \Cref{theorem-equicontinuous-family} shows that \((G_j)_j\) is equicontinuous. Additionally, \eqref{eq:energy-rescale-parameter-rho-and-lambda-j} implies that
    \[
    E(G_j|_{\mathbb{B}}) = E(F_j|_{\mathbb{B}(x_j,\rho_j)}) \geq E_N
    \]
    for all \(j\). The claim follows.
\end{proof}

Following \Cref{prop:energy-rescale-principle-equicont}, we record a number of related contrapositive statements in the following technical lemma. 

\begin{lemma}\label{lem:zalcman-seq-at-normal-point}
    Let $\Omega \subset \R^n$ be an open subset, and let \((N,\omega)\) be an \(n\)-calibrated  \(m\)-manifold with an injectivity radius lower bound $2\delta$ and a sectional curvature upper bound $( \pi/(2\delta) )^2$. Let \(\mathcal{F}\) be a family of \(K\)-quasiregular curves \(\Omega \to (N,\omega)\) for $K \geq 1$.
    \begin{enumerate}
        \item If \(\mathcal{F}\) is equicontinuous at \(x \in \Omega\), then every Zalcman sequence \((G_j)_j\) at \(x\) is equicontinuous and satisfies
        \[
        \limsup_{j\to \infty} E(G_j|_{\mathbb{B}(y,r)}) = 0 \quad \text{for all}\quad r> 0 \quad \text{and} \quad y \in \R^n.
        \] 

        \item If \(\mathcal{F}\) is equibounded in space, then every Zalcman sequence \((G_j)_j\) at any \(x \in \Omega\) is also equibounded in space. In particular, there exists a compact set \(D \subset N\) satisfying the following:  for \(s > 0\), there exists \(j_s \in \N\) such that
        \[
        G_j(\overline{\mathbb{B}}(s)) \subset D \quad \text{for all} \quad j \geq j_s.
        \]

        \item If \(\mathcal{F}\) is normal at \(x \in \Omega\), then every Zalcman sequence \((G_j)_j\) at \(x\) is normal and each subsequential local uniform limit of \((G_j)_j\) is constant.
    \end{enumerate}
\end{lemma}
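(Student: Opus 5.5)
We prove the three items separately. Each follows from the equicontinuity criteria of \Cref{theorem-equicontinuous-family} and \Cref{corollary-normal-family}, together with the observation that the energy $E = E_n$ is invariant under the affine rescalings $y \mapsto x_j + \rho_j y$ of the domain. Indeed, since $\|DG_j\|(y) = \rho_j \|DF_j\|(x_j+\rho_j y)$ and the Jacobian of the change of variables is $\rho_j^n$, we have
\begin{align*}
    E\bigl( G_j|_{\mathbb{B}(y,r)} \bigr) = E\bigl( F_j|_{\mathbb{B}(x_j+\rho_j y,\rho_j r)} \bigr).
\end{align*}
Throughout, let $(F_j)_j$, $(x_j)_j$ and $(\rho_j)_j$ be the sequences corresponding to $(G_j)_j$.

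For (1), fix $\eta > 0$. By \Cref{theorem-equicontinuous-family} (4), applied to the restriction of $\mathcal{F}$ to a neighbourhood of $x$, there exists $r_0 > 0$ such that $\sup_{F \in \mathcal{F}} E(F|_{\mathbb{B}(x,r_0)}) < \eta$. Given $y \in \R^n$ and $r > 0$, we have $x_j \to x$ and $\rho_j(|y|+r) \to 0$, so $\mathbb{B}(x_j+\rho_j y,\rho_j r) \subset \mathbb{B}(x,r_0)$ for all large $j$. The identity above then gives $E(G_j|_{\mathbb{B}(y,r)}) < \eta$ for large $j$, and since $\eta$ was arbitrary the $\limsup$ vanishes. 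For equicontinuity of $(G_j)_j$, fix $y$. Each $G_j$ with $j$ large is defined on $\mathbb{B}(y,1)$ and has energy at most $E_N$ there. There are only finitely many small $j$, and each individual $G_j$ is continuous; so we may apply \Cref{theorem-modulus-of-continuity-estimates} to the tail and treat the finitely many initial maps separately. Alternatively, we apply \Cref{theorem-equicontinuous-family} (3) to the tail family on a large ball $\mathbb{B}(R)$ contained in all the domains.

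For (2), equiboundedness of $\mathcal{F}$ provides a compact set $D \subset N$ with $F(\overline{\mathbb{B}}(x,r_0)) \subset D$ for all $F \in \mathcal{F}$, where $\overline{\mathbb{B}}(x,r_0) \subset \Omega$. Given $s > 0$, for $j \geq j_s$ we have $x_j + \rho_j \overline{\mathbb{B}}(s) \subset \overline{\mathbb{B}}(x,r_0)$, and hence $G_j(\overline{\mathbb{B}}(s)) \subset D$. Equiboundedness on an arbitrary compact set follows in the same way, using the convention on variable domains.

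For (3), normality at $x$ means that on some neighbourhood $U$ of $x$ the family $\mathcal{F}|_U$ is equicontinuous and equibounded, by Arzel\`a--Ascoli. Parts (1) and (2) then show that $(G_j)_j$ is equicontinuous and equibounded, hence normal, again by Arzel\`a--Ascoli together with a diagonal argument over the exhaustion $\mathbb{B}(k)$. Let $G$ be a subsequential local uniform limit. By \Cref{corollary-convergence-of-pullbacks}, $G$ is $K$-quasiregular and
\begin{align*}
    E(G|_{\mathbb{B}(y,r)}) \leq \liminf_{j} E(G_j|_{\mathbb{B}(y,r)}) = 0
\end{align*}
by (1). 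Thus $DG = 0$ almost everywhere on $\R^n$, and as $G$ is a continuous Sobolev map on the connected set $\R^n$, it is constant.

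The only delicate point is the bookkeeping for variable domains in the equicontinuity claims. This is handled by always passing to tails $j \geq j_s$ on fixed balls, which is the convention stated after \Cref{theorem-rescaling-principle}.
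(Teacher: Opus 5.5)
Your proposal is correct and follows the paper's proof essentially step by step. For (1) you combine the rescaling identity $E(G_j|_{\mathbb{B}(y,r)}) = E(F_j|_{\mathbb{B}(x_j+\rho_j y,\rho_j r)})$ with the small-energy criterion of \Cref{theorem-equicontinuous-family}. For (2) you use containment of the rescaled balls in a fixed compact ball. For (3) you use Arzel\`a--Ascoli together with the lower semicontinuity of energy from \Cref{corollary-convergence-of-pullbacks}.

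You also make explicit two points the paper leaves implicit. The first is the equicontinuity of the tail of $(G_j)_j$, which you obtain via \Cref{theorem-modulus-of-continuity-estimates}. The second is that normality at $x$ yields equicontinuity and equiboundedness on a neighbourhood, which is exactly what is needed to invoke (1) and (2) in the proof of (3).
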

\begin{proof}
    Without loss of generality, we may assume that \(\Omega = \mathbb{B} \subset \R^n\) and that \((G_j)_j\) is a Zalcman sequence for \(\mathcal{F}\) at \(0\) with parameters \((\rho_j)_j\), points \((x_j)_j\), and \((F_j)_j\) being the corresponding sequences.

    \emph{For (1):} Assume that \(\mathcal{F}\) is equicontinuous at $0$. Let \(0 < \varepsilon < E_N\). Since \(\mathcal{F}\) is equicontinuous at \(0\), by \Cref{theorem-equicontinuous-family}, there exists \(R > 0\) for which $$\sup_{F \in \mathcal{F}} E(F|_{\mathbb{B}(R)}) < \varepsilon.$$ Take $y \in \R^n$ and $r > 0$. As \(x_j \to 0\) and \(\rho_j \to 0\), we have that \(\mathbb{B}(x_j + \rho_j y, r \rho_j) \subset \mathbb{B}(R)\) for all \(j\) large enough. Hence
    \begin{align*}
        \limsup_{j\to\infty} E(G_j|_{\mathbb{B}(y, r)}) &\leq \limsup_{j\to\infty} E(F_j|_{\mathbb{B}(R)}) \leq \sup_{F \in \mathcal{F}} E(F|_{\mathbb{B}(R)}) < \varepsilon.
    \end{align*}
    As \(\varepsilon > 0\) was arbitrary, the claim follows.

    \emph{For (2):} Assuming \(\mathcal{F}\) is equibounded in space, for each $R \in (0,1)$, there exists a compact subset \(D \subset N\) such that 
    \(\bigcup_{F \in \mathcal{F}} F(\overline{\mathbb{B}}(R)) \subset D\).
    Consider \(s > 0\). Since \(\rho_j \to 0\) and \(x_j \to 0\), there exists \(j_0 \in \N\) for which \(\overline{\mathbb{B}}(x_j, s\rho_j) \subset \mathbb{B}(R)\) and 
    \[
    G_j(\overline{\mathbb{B}}(s)) = F_j(\overline {\mathbb{B}}(x_j, s\rho_j)) \subset F_j(\overline{\mathbb{B}}(R)) \subset D
    \]
    for all \(j \geq j_0\). Hence \((G_j)_j\) is equibounded in space and the claim follows.

    \emph{For (3):} Assume that \(\mathcal{F}\) is normal at $0$. Now \Cref{lem:zalcman-seq-at-normal-point} (1)--(2) above imply that \((G_j)_j\) is normal. Let \((G_{j_\ell})_\ell\) be a subsequence of \((G_j)_j\) which converges locally uniformly to a \(K\)-quasiregular curve \(G\colon \R^n \to (N,\omega)\). By \Cref{lem:zalcman-seq-at-normal-point} (1) and \Cref{corollary-convergence-of-pullbacks}, 
    \[
    E(G|_{\mathbb{B}(r)}) \leq \liminf_{\ell\to\infty} E(G_{j_\ell}|_{\mathbb{B}(r)}) \leq \limsup_{\ell\to\infty} E(G_{j_\ell}|_{\mathbb{B}(r)}) = 0
    \]
    for \(r > 0\). Letting \(r \to \infty\), we see that \(E(G) = 0\). Hence \(G\) is constant and the claim follows.
\end{proof}
The following proposition is the last part needed for the proof of \Cref{theorem-energy-rescaling-principle}.

\begin{proposition}\label{prop:energy-rescale-principle-normal}
    Let \(\Omega \subset \R^n\) be an open set, and let \((N,\omega)\) be an \(n\)-calibrated  \(m\)-manifold with an injectivity radius lower bound $2\delta$, a sectional curvature upper bound $( \pi/(2\delta) )^2$, and H\"older data \(E_N\) and \(\beta\) for $K \geq 1$. Let \(\mathcal{F}\) be an equibounded family of \(K\)-quasiregular curves \(\Omega \to (N,\omega)\).

    If \(\mathcal{F}\) is not normal at \(x \in \Omega\), then there exists a Zalcman sequence \((G_j)_j\) at \(x\) which converges locally uniformly to a bounded non-constant \(\beta\)-Hölder continuous \(K\)-quasiregular curve \(G \colon \R^n \to (N,\omega)\).
\end{proposition}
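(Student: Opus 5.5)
Since $\mathcal{F}$ is equibounded but not normal at $x$, the Arzelà--Ascoli theorem shows that $\mathcal{F}$ is not equicontinuous at $x$. \Cref{prop:energy-rescale-principle-equicont} therefore supplies a Zalcman sequence $(G_j)_j$ at $x$ and radii $R_j \to \infty$ with
\begin{align*}
    E(G_j|_{\mathbb{B}}) \geq E_N
    \quad\text{and}\quad
    \sup_{|y| \leq R_j/2} E(G_j|_{\mathbb{B}(y,1/4)}) \leq E_N .
\end{align*}
By \Cref{lem:zalcman-seq-at-normal-point}~(2), this sequence is equibounded in space; in fact all $G_j(\overline{\mathbb{B}}(s))$ lie in a single compact set $D$ once $j \geq j_s$. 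Equicontinuity follows from \Cref{theorem-equicontinuous-family}~(3). Arzelà--Ascoli and a diagonal argument over an exhaustion of $\R^n$ by balls then give a subsequence converging locally uniformly to some $G\colon \R^n \to N$. Since $G(\R^n) \subset D$, the limit is bounded, and \Cref{corollary-convergence-of-pullbacks} shows that $G$ is a $K$-quasiregular curve.

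For the uniform $\beta$-Hölder bound, fix $y \in \R^n$. For large $j$ we have $E(G_j|_{\mathbb{B}(y,1/4)}) \leq E_N$, so by \Cref{corollary-convergence-of-pullbacks} (the lower semicontinuity of energy) also $E(G|_{\mathbb{B}(y,1/4)}) \leq E_N$. Applying \Cref{theorem-modulus-of-continuity-estimates} on $\mathbb{B}(y,1/4)$ with $r = 1/8$ gives
\begin{align*}
    d(G(a),G(b)) \leq C(n,\beta) E_N^{1/n} (4|a-b|)^\beta
    \quad\text{for } a,b \in \mathbb{B}(y,1/8).
\end{align*}
The constant does not depend on $y$. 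For $|a-b| \geq 1/8$, the diameter of the bounded set $G(\R^n)$ gives the Hölder bound directly. Together these make $G$ uniformly $\beta$-Hölder on $\R^n$.

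The main obstacle is showing that $G$ is non-constant. Energy may concentrate at a single point, in which case the lower bound $E(G_j|_{\mathbb{B}}) \geq E_N$ need not pass to the limit; lower semicontinuity only runs in the wrong direction. I would first try weak convergence of pullbacks. By \Cref{corollary-convergence-of-pullbacks},
\begin{align*}
    \int_{\mathbb{B}} G^*\omega = \lim_j \int_{\mathbb{B}} G_j^*\omega \geq \frac{c}{K} \liminf_j E(G_j|_{\mathbb{B}}) \geq \frac{c}{K} E_N > 0,
\end{align*}
using the distortion inequality and the comass lower bound. Hence $E(G|_{\mathbb{B}}) \geq \int_{\mathbb{B}} \star G^*\omega > 0$ (the comass is at most one), and $G$ is non-constant. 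The point that must be checked is that this corollary really applies to maps defined on expanding balls $\mathbb{B}(R_j)$ rather than a fixed $\Omega$. It does after restricting to a fixed $\mathbb{B}(2)$ for large $j$: the uniform small-energy bound on quarter-balls makes the reverse Hölder inequality \Cref{proposition-weak-reverse-holder-inequality} apply uniformly, which gives $L^{p/n}$-equi-integrability of $\|DG_j\|^n$. That equi-integrability rules out concentration and is exactly what justifies passing to the limit.
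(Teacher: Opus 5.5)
Your proposal is correct and follows the paper's proof essentially step by step. The shared steps are: non-equicontinuity via Arzelà--Ascoli, the Zalcman sequence from \Cref{prop:energy-rescale-principle-equicont}, equiboundedness and boundedness of $G$ from \Cref{lem:zalcman-seq-at-normal-point}~(2), and non-constancy via the weak convergence of pullbacks in \Cref{corollary-convergence-of-pullbacks}. The only cosmetic difference is in the H\"older step: you apply \Cref{theorem-modulus-of-continuity-estimates} to $G$ itself after using lower semicontinuity of energy, whereas the paper applies it to $G_j$ and passes to the uniform limit; both work.
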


\begin{proof}
    Without loss of generality, we may assume that \(\Omega = \mathbb{B}\) and \(x = 0\).
    Since \(\mathcal{F}\) is equibounded in space but not normal at \(0\), the Arzelà--Ascoli theorem implies it is not equicontinuous at \(0\). Then \Cref{prop:energy-rescale-principle-equicont} gives the existence of an equicontinuous Zalcman sequence \((G_j)_j\) at \(0\) and radii $(R_j)_j$ diverging to $\infty$ such that
    \[
        E(G_j|_{\mathbb{B}}) \geq E_N \quad \text{and} \quad \sup_{ |y| \leq R_j/2} E( G_j|_{\mathbb{B}(y,1/4)} ) \leq E_N \quad \text{for all \(j\).}
    \] 
    
    Let \((\rho_j)_j\), \((x_j)_j\) and \((F_j)_j\) be the sequences corresponding to \((G_j)_j\).
    By \Cref{lem:zalcman-seq-at-normal-point} (2) and the Arzelà--Ascoli theorem, \((G_j)_j\) is normal.
    By passing to a convergent subsequence and relabeling, \((G_j)_j\) converges locally uniformly to a \(K\)-quasiregular curve \(G\colon \R^n \to (N,\omega)\). \Cref{corollary-convergence-of-pullbacks} additionally implies that \(\star G^*_j \omega \to \star G^*\omega\) in \(L^{p/n}(U)\) for the exponent \(p > n\) from that corollary, and for each precompact domain \(U \subset \R^n\). Thus the energy lower bound for \(G_j\) implies
    \[
    0 < \frac{c}{K} E_N \leq \frac{c}{K}\limsup_{j\to\infty} E(G_j|_{\mathbb{B}(0,1)}) \leq \lim_{j\to\infty} \int_{\mathbb{B}(0,1)} G_j^*\omega = \int_{\mathbb{B}(0,1)} G^* \omega.
    \]
    Hence \(G\) is non-constant. Note that \Cref{lem:zalcman-seq-at-normal-point} (2) additionally gives the existence of a compact set \(D \subset N\) satisfying that for each \(r > 0\), \(G_j(\overline{\mathbb{B}}(0,r)) \subset D\) for all \(j\) large enough. In particular, this implies \(G(\R^n) \subset D\) and hence that \(G\) is bounded.

    We prove that $G$ is $\beta$-Hölder. Observe that for each \(z \in \R^n\), \Cref{theorem-modulus-of-continuity-estimates} and the energy upper bound for \((G_j)_j\) imply
    \[
    d(G_j(x), G_j(y)) \leq C(n,\beta) E_N^{1/n} 4^{\beta} \abs{x - y}^\beta
    \]
    for all \(x,y \in \mathbb{B}(z,1/8)\) and sufficiently large $j$. Since \(G_j\) converges locally uniformly to \(G\), we see that
    \[
    d(G(x), G(y)) \leq C(n,\beta) E_N^{1/n} 4^{\beta} \abs{x - y}^\beta
    \]
    for all \(x,y \in \R^n\) with \(\abs{x - y} < 1/4\). When \(\abs{x - y} \geq 1/4\), we estimate 
    \[
    d(G(x), G(y)) \leq (\diam D) \leq (\diam D) 4^\beta \abs{x - y}^\beta.
    \]
    Combining these two estimates completes the proof.
\end{proof}

\begin{proof}[Proof of \Cref{theorem-energy-rescaling-principle}]
    \Cref{prop:energy-rescale-principle-equicont} and \Cref{lem:zalcman-seq-at-normal-point} give a proof of `(1) \(\Leftrightarrow\) (2)' of \Cref{theorem-energy-rescaling-principle} and \Cref{prop:energy-rescale-principle-normal} completes the proof of `(3) \(\Leftrightarrow\) (4)' of the theorem. In fact, in `(3) $\Rightarrow$ (4)', the limiting map is not only non-constant but bounded and $\beta$-Hölder.
\end{proof}

In addition to \Cref{theorem-rescaling-principle-equiboundedness-build-in,theorem-liouville-property-equivalent-normality}, we have the following applications of our rescaling principle, \Cref{prop:energy-rescale-principle-normal}.
To begin with, combining \Cref{theorem-liouville-property-equivalent-normality} and \Cref{prop:energy-rescale-principle-normal} shows that the existence of a bounded entire quasiregular curve implies the existence of a Hölder continuous one; the corollary was anticipated by Gromov in \cite[Section $\text{6.42}_{+}$]{gromov-2007-metric-structures-for-riemannian-and-non-riemannian-spaces} for quasiminimal quasiconformal maps which are closely related to quasiregular curves. See also \cite[Section 2]{bonk-heinonen-2001-quasiregular-mappings-and-cohomology} for discussion on quasiregular mappings in this context.

\begin{corollary}\label{corollary-rescale-entire-qrc-to-uniformly-holder}
    Let \((N,\omega)\) be an \(n\)-calibrated  \(m\)-manifold with an injectivity radius lower bound $2\delta$, a sectional curvature upper bound $( \pi/(2\delta) )^2$, and H\"older data \(E_N\) and \(\beta\) for $K \geq 1$.   
    
    Suppose there exists a bounded non-constant \(K\)-quasiregular curve \(f \colon \R^n \to (N,\omega)\). Then there exists a bounded, non-constant, and \(\beta\)-Hölder continuous \(K\)-quasiregular curve \(F \colon \R^n \to (N,\omega)\).
\end{corollary}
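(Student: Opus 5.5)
The plan is to manufacture a non-normal, equibounded family from $f$ by rescaling it, and then feed that family to \Cref{prop:energy-rescale-principle-normal}, whose limit map is automatically $\beta$-Hölder. This is the same blow-down used in the proof of `(3) $\Rightarrow$ (1)' of \Cref{theorem-liouville-property-equivalent-normality}.

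Define $F_j \colon \mathbb{B} \to (N,\omega)$ by $F_j(y) \coloneqq f(jy)$ and set $\mathcal{F} = \{F_j \mid j \in \N\}$. Each $F_j$ is a $K$-quasiregular curve, since precomposition with a Euclidean dilation preserves the distortion inequality \eqref{equation-quasiregular-curve-distortion-inequality}. The family is equibounded in space because every $F_j$ takes values in the compact set $\overline{f(\R^n)}$. This uses that $f$ is bounded and that $N$ is complete; completeness follows from the injectivity radius lower bound, so closed bounded sets are compact.

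Next I show that $\mathcal{F}$ is not normal at $0$, as in the proof of \Cref{theorem-liouville-property-equivalent-normality}. Take $x_j = 0$ and $\rho_j = 1/j$. The resulting Zalcman sequence is $G_j(z) = F_j(z/j) = f(z)$, which converges locally uniformly to the non-constant map $f$. If $\mathcal{F}$ were normal at $0$, then \Cref{lem:zalcman-seq-at-normal-point} (3) would force every subsequential limit of $(G_j)_j$ to be constant, which is a contradiction.

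Finally, \Cref{prop:energy-rescale-principle-normal}, applied to the equibounded, non-normal family $\mathcal{F}$ at $0$, yields a Zalcman sequence converging locally uniformly to a bounded, non-constant, $\beta$-Hölder continuous $K$-quasiregular curve $F \colon \R^n \to (N,\omega)$. There is no real obstacle: all the analysis is contained in the cited propositions, and the only checks are that dilation preserves $K$-quasiregularity and that $\overline{f(\R^n)}$ is compact.
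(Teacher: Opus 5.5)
Your proof is correct and is essentially the paper's argument. The paper obtains the corollary by combining \Cref{theorem-liouville-property-equivalent-normality}, whose `(3) $\Rightarrow$ (1)' proof uses exactly your blow-down family $F_j(y) = f(jy)$ and the constant Zalcman sequence $G_j = f$, with \Cref{prop:energy-rescale-principle-normal}; you have unpacked that theorem's proof instead of citing it.
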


We also obtain an easy proof of \cite[Corollary~1.3]{pankka-pim-2025-nodal-resolution-of-quasiregular-curves-via-bubble-trees}.
\begin{corollary}\label{corollary-energy-rescaling-principle-aspherical-not-normal-implies-infinite-energy}
    Let \(\Omega \subset \R^n\) be an open set, and let \((N,\omega)\) be an \(n\)-calibrated  \(m\)-manifold with an injectivity radius lower bound $2\delta$ and a sectional curvature upper bound $( \pi/(2\delta) )^2$. Suppose that $\omega$ is aspherical, and let \(\mathcal{F}\) be a family of \(K\)-quasiregular curves \(\Omega \to (N,\omega)\) for $K \geq 1$.
    
    If \(\mathcal{F}\) is equibounded but not normal at \(x \in \Omega\), then 
    \[
    \sup_{F \in \mathcal{F}} E(F|_{\mathbb{B}(x,s)}) = \infty \quad \text{for all sufficiently small $s > 0$.}
    \]
\end{corollary}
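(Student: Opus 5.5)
Assume, to reach a contradiction, that the conclusion fails: there are $x \in \Omega$, an arbitrarily small radius $s_0 > 0$ with $\mathbb{B}(x,s_0) \subset \Omega$, and a constant $E_0 < \infty$ such that
\[
    \sup_{F \in \mathcal{F}} E(F|_{\mathbb{B}(x,s_0)}) \leq E_0 .
\]
(If the supremum is infinite for one radius it is infinite for every larger radius, so finiteness for some small $s_0$ is exactly the negation of the claim.) Since $\mathcal{F}$ is equibounded but not normal at $x$, \Cref{prop:energy-rescale-principle-normal} gives a Zalcman sequence $G_j(y) = F_j(x_j + \rho_j y)$ converging locally uniformly to a bounded, non-constant $K$-quasiregular curve $G \colon \R^n \to (N,\omega)$. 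The plan is to show that $G$ has finite energy, and then to derive a contradiction from \Cref{remark-homological-non-triviality-asherical}: an entire finite-energy quasiregular curve into an aspherical target is constant.

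\textbf{Finite energy of $G$.} Fix $r > 0$. Since $x_j \to x$ and $\rho_j \to 0$, the ball $\mathbb{B}(x_j, r\rho_j)$ lies in $\mathbb{B}(x,s_0)$ for all large $j$. Conformal invariance of the $n$-energy under the similarity $y \mapsto x_j + \rho_j y$ gives
\[
    E(G_j|_{\mathbb{B}(r)}) = E(F_j|_{\mathbb{B}(x_j,r\rho_j)}) \leq E_0 .
\]
By \Cref{corollary-convergence-of-pullbacks}, applied on $\mathbb{B}(r)$ (the $G_j$ are defined there for large $j$), lower semicontinuity yields $E(G|_{\mathbb{B}(r)}) \leq E_0$. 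Letting $r \to \infty$ gives $E(G) \leq E_0 < \infty$.

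\textbf{Liouville step.} Invoke \Cref{remark-homological-non-triviality-asherical}. Its argument, which we expect to be the main obstacle, runs as follows. By the removability of singularities theorem \cite[Theorem 1.9]{ikonen-2025-quasiregular-curves-removability-of-singularities}, a finite-energy quasiregular curve $G \colon \R^n \to (N,\omega)$ extends over $\infty$ to a quasiregular curve $\widehat{G} \colon \S^n \to (N,\omega)$. Removability here requires the bounded-image or finite-energy hypothesis; $G$ is bounded, and its image lies in the compact set $D$ from \Cref{lem:zalcman-seq-at-normal-point}~(2), so that hypothesis is met. Since $\omega$ is aspherical and $\widehat{G}$ is continuous and lies in $W^{1,n}(\S^n,N)$, we get $\int_{\S^n} \widehat{G}^{*}\omega = 0$. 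The distortion inequality then gives
\[
    \frac{c}{K} E(\widehat{G}) \leq \int_{\S^n} \widehat{G}^{*}\omega = 0 ,
\]
so $\widehat{G}$ is constant. This contradicts the non-constancy of $G$.

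The only delicate point is verifying the hypotheses of the removability theorem; the conformal invariance of $E = E_n$ and the application of \Cref{corollary-convergence-of-pullbacks} are routine.
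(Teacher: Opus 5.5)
Your proof is correct and follows essentially the same route as the paper: rescale via the energy rescaling principle to a non-constant entire limit $G$, bound $E(G|_{\mathbb{B}(r)})$ by the energies of the $F_j$ on a fixed small ball using the lower semicontinuity in \Cref{corollary-convergence-of-pullbacks}, and then invoke \Cref{remark-homological-non-triviality-asherical}. The differences are cosmetic. The paper argues directly, obtaining $\infty = E(G) \leq \sup_{F \in \mathcal{F}} E(F|_{\mathbb{B}(s)})$, rather than by contradiction, and it first restricts $\mathcal{F}$ to a ball $\mathbb{B}(x,s)$ on which the family is equibounded before applying the rescaling theorem.
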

\begin{proof}
Without loss of generality, we may assume \(\Omega = \mathbb{B}\) and \(x = 0 \in \mathbb{B}\).  

Take \(0 < s < 1\) small enough that the restricted family \(\mathcal{F}|_{\mathbb{B}(s)} = \set{F|_{\mathbb{B}(s)} \mid F \in \mathcal{F}}\) is equibounded. Then by \Cref{theorem-energy-rescaling-principle}, there exists a Zalcman sequence \((G_j)_j\) for \(\mathcal{F}|_{\mathbb{B}(s)}\) at 0 which converges locally uniformly to a non-constant \(K\)-quasiregular curve \(G\colon \R^n \to (N,\omega)\). 
Since \(\omega\) is aspherical, \(E(G) = \infty\) by \Cref{remark-homological-non-triviality-asherical}.

Let \((\rho_j)_j\), \((x_j)_j\), and \((F_j)_j\) be the sequences corresponding to \((G_j)_j\). Now, observe that for \(r > 0\) and \(j\) large enough, \(\mathbb{B}(x_j,r \rho_j) \subset \mathbb{B}(s)\), and hence
\[
E(G_j|_{\mathbb{B} (r)}) = E(F_j|_{\mathbb{ B }(x_j, r\rho_j)}) \leq E(F_j|_{\mathbb{B}(s)})
\]
for all \(j\) large enough.
As \(G_j \to G\) locally uniformly, we compute that
\[
    E(G|_{ \mathbb{B}(r)}) \leq \liminf_{j \to \infty} E(G_j|_{ \mathbb{B}(r)}) \leq \limsup_{j\to \infty} E(F_j|_{ \mathbb{B}(s)}) \leq \sup_{F \in \mathcal{F}} E(F|_{\mathbb{B}(s)}).
\]
Letting \(r \to \infty\), we obtain \(\infty = E(G) \leq \sup_{F \in \mathcal{F}} E(F|_{\mathbb{B}(s)})\). As \(s > 0\) was arbitrary, the claim follows.
\end{proof}

\section{Conformal curves and hyperbolicity}\label{section-smith-maps-and-hyperbolicity}
    We start the proof of \Cref{theorem-characterization-of-hyperbolicity-calibration} with the following lemma showing that conformal curves are \(n\)-harmonic on small-energy domains; see \cite[Theorem 5.1]{heikkila-pankka-prywes-2023-quasiregular-curves-of-small-distortion-in-product-manifolds}, \cite[Section 1.1]{pankka-quasiregular-curves}, \cite[Section 3.5]{cheng-karigiannis-madnick-2020-bubble-three-convergence-of-conformally-cross-product-preserving-maps}, and \cite[Theorem 6.9]{ikonen-2026-pushforward-of-currents-under-sobolev-maps} for related results.
    \begin{lemma}\label{lemma-smith-maps-are-n-harmonic}
    Let \(M\) be an oriented \(n\)-manifold, let $(N, \omega)$ be an $n$-calibrated $m$-manifold with an injectivity radius lower bound $2\delta$ and a sectional curvature upper bound $( \pi/(2\delta) )^2$, and let $F \colon M \to ( N, \omega )$ be a conformal curve. Then $F$ is locally $n$-harmonic, i.e. if $U \subset M$ is a precompact smooth domain with $\partial U \neq \emptyset$ and $E( F|_{U} ) \leq n^{-n/2}E_N$ for the constant in \eqref{equation-small-mass-isoperimetric-inequalities}, then for any $G \in W^{1,n}( U, N )$ coinciding with $F$ outside a compact subset of $U$, it holds that
    \begin{align*}
        \mathcal{E}( F|_{U} ) \leq \mathcal{E}( G ).
    \end{align*}
    \end{lemma}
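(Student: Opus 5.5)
The plan is the calibration argument. Since $F$ is conformal, $\|\omega\|\circ F = 1$ and $\|DF\|^n = \star F^*\omega$ almost everywhere. Conformality also forces $DF$ to be a scalar multiple of a linear isometry onto a calibrated plane, so $\|DF\|_{HS}^n = n^{n/2}\|DF\|^n$. Hence
\begin{align*}
    \mathcal{E}(F|_U) = n^{n/2}\int_U F^{*}\omega .
\end{align*}
For any competitor $G$, the Hadamard inequality in property (4) of the pushforward, together with $\|\omega\|\le 1$, gives
\begin{align*}
    \int_U G^{*}\omega \le n^{-n/2}\int_U \|DG\|_{HS}^n\,dz = n^{-n/2}\mathcal{E}(G).
\end{align*}
It therefore suffices to prove $\int_U F^{*}\omega \le \int_U G^{*}\omega$. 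Since $\int_U G^*\omega$ could be negative, we need the homological equality $\int_U F^*\omega = \int_U G^*\omega$.

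Before fixing $G$, we may assume $\mathcal{E}(G)\le \mathcal{E}(F|_U)$, since otherwise there is nothing to prove. Then $E(G)\le \mathcal{E}(G)\le \mathcal{E}(F|_U) = n^{n/2}E(F|_U)\le E_N$. Here $E(F|_U)\le n^{-n/2}E_N$ is exactly the hypothesis, and it is used to get this bound.

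To compare the two integrals, choose a smooth domain $U'\Subset U$ containing the compact set where $F\neq G$, with $[U']\in \mathbf{I}_{n,c}(\rho)$ for both maps; a generic collar level works, as discussed in the preliminaries. Set $T = F_*[U'] - G_*[U']$. Since $F=G$ on $\partial U'$, we get $\partial T = F_*[\partial U'] - G_*[\partial U'] = 0$, so $T$ is an integral $n$-cycle. Its mass satisfies
\begin{align*}
    M(T)\le E(F|_U)+E(G)\le 2E_N \le M',
\end{align*}
using the mass–energy estimate (3) and $E_N\le M'/2$ from \eqref{equation-small-mass-isoperimetric-inequalities}. The $n$-dimensional small mass isoperimetric inequality then yields $S\in\mathbf{I}_{n+1}(N)$ with $\partial S = T$. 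Since $\omega$ is closed, $T(\omega) = S(d\omega) = 0$. Outside $U'$ the maps agree, so $\int_U F^*\omega = \int_U G^*\omega$, and the chain of inequalities closes.

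The delicate point is the pushforward of the competitor $G$, which need not be continuous. The theory of \cite{ikonen-2026-pushforward-of-currents-under-sobolev-maps} applies to $W^{1,n}_{\loc}$ maps for suitable representatives, but we must choose $\partial U'$ where $F$ and $G$ agree and where the traces are admissible, so that the boundaries genuinely cancel. A second point to check is whether the pairing $S(d\omega)$ is legitimate when $d\omega$ is merely bounded on a noncompact $N$; it is, since $d\omega=0$.

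The conformal identity $\|DF\|_{HS}^n = n^{n/2}\|DF\|^n$ also needs justification. It follows from equality in Hadamard's inequality for $\star F^*\omega = \|DF\|^n$ with $\|\omega\|=1$: equality forces all singular values of $DF$ to be equal.
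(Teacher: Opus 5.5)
Your proposal is correct and follows essentially the same route as the paper: reduce to competitors with $\mathcal{E}(G) \le \mathcal{E}(F|_U)$, bound the mass of the integral cycle $F_*[U'] - G_*[U']$ by $2E_N \le M'$, fill it using the small mass isoperimetric inequality, and conclude from the closedness of $\omega$ together with Hadamard's inequality. The only difference is cosmetic. You use a single interior domain $U' \supset C$ and the agreement of $F$ and $G$ on $U \setminus U'$, whereas the paper exhausts $U$ by domains $U_j$ and passes to the limit.
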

    Here $\mathcal{E} = \mathcal{E}_n$ is the energy defined using the Hilbert--Schmidt norm. The above implies that $F|_{U}$ is weakly $n$-harmonic: it is a weak solution to the $n$-harmonic system of equations.
    \begin{proof}
    Since $F$ is a conformal curve, it follows that $\|DF\|^{n} = n^{-n/2} \|DF\|^{n}_{HS} = \star F^{*}\omega$ almost everywhere; see e.g. \cite[Lemma 2.1]{ikonen-pankka-2024-liouvilles-theorem-in-calibrated-geometries}. It suffices to prove that if $G \in W^{1,n}( U, N )$ agrees with $F$ outside a compact set and
    \begin{align*}
        \mathcal{E}(G)
        =
        \int_{ U } \|DG\|^{n}_{HS} \,dz
        \leq
        \int_{ U } \|DF\|^{n}_{HS} \,dz
        =
        \mathcal{E}( F|_{U} )
        =
        n^{n/2} E( F|_{U} ),
    \end{align*}
    then equality holds. We prove this using the pushforward techniques from \cite{ikonen-2026-pushforward-of-currents-under-sobolev-maps}.
    
    Let $C \subset U$ be a compact set so that $F = G$ in $U \setminus C$. As explained in the preliminaries, we may approximate $U$ from the inside by smooth domains $( U_j )_j$ containing $C$ for which the pushforwards $F_{*}[U_j]$ and $G_{*}[U_j]$ are well-defined and the boundaries of the currents agree. Now, by the mass-energy inequality of the pushforward, it holds that
    \begin{align*}
        M( G_{*}[U_j] ) 
        &\leq E( G|_{U_j} ) 
        \leq \int_{ U_j } \|DG\|^{n}_{HS} \,dz 
        \leq \int_{ U } \|DG\|^{n}_{HS} \,dz
        \\
        &\leq \int_{ U  } \|DF\|^{n}_{HS} \,dz
        = n^{n/2} E( F|_{U} )
        \leq
        E_N.
    \end{align*}
    Similarly,
    \begin{align*}
        M( F_{*}[U_j] ) \leq E( F|_{U_j} ) \leq n^{-n/2}E_N \leq E_N. 
    \end{align*}
    It follows that $T = F_{*}[U_j] - G_{*}[U_j]$ is an integral $n$-cycle with mass at most $2E_N \leq M'$. Thus, by the isoperimetric inequality, there exists an integral $(n+1)$-current whose boundary is $T$. Then, by the compatibility of the pushforward and pullback and $\omega$ being closed, Stokes' theorem implies
    \begin{align*}
        \int_{ U_j } F^{*}\omega = \int_{ U_j } G^{*}\omega.
    \end{align*}
    We conclude, therefore, that
    \begin{align*}
        n^{-n/2}
        \int_{ U_j } \|DF\|^{n}_{HS} \,dz
        =
        \int_{ U_j } F^{*}\omega
        =
        \int_{ U_j } G^{*}\omega
        \leq
        n^{-n/2}
        \int_{U_j} \|DG\|^{n}_{HS} \,dz
    \end{align*}
    by Hadamard's inequality. By passing to the limit $j \to \infty$, dominated convergence establishes that
    \begin{align*}
        \int_{ U } \|DF\|^{n}_{HS} \,dz
        \leq
        \int_{ U } \|DG\|^{n}_{HS} \,dz.
    \end{align*}
    The claimed equality holds and thus the conclusion follows.
    \end{proof}
    By \Cref{lemma-smith-maps-are-n-harmonic} and a special case of a theorem due to Hardt--Lin \cite[Theorem 3.1]{hardt-lin-1987-mappings-minimizing-the-lp-norm-of-the-gradient}, it follows that conformal curves $\mathbb{B} \to ( N, \omega )$ have locally Hölder continuous differentials. On the level of families, this can be quantified by the following version of Marty's theorem.
    \begin{proposition}\label{proposition-equicontinuity-results}
        Let $(N, \omega)$ be a closed $n$-calibrated $m$-manifold with an injectivity radius lower bound $2\delta$ and a sectional curvature upper bound $( \pi/(2\delta) )^2$. Then the following are equivalent for the family $\mathcal{F}_1(\omega)$ of all conformal curves $\mathbb{B} \to (N,\omega)$.
        \begin{enumerate}
            \item the family $\mathcal{F}_1(\omega)$ is normal;
            \item there exists a radius $s \in (0,1)$ and a constant $D > 0$ such that
            \begin{align*}
                \sup_{ F \in \mathcal{F}_{1}(\omega) } \sup_{ |z| \leq s } \|DF\|(z) \leq D.
            \end{align*}
        \end{enumerate}
    \end{proposition}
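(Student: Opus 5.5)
The implication (2) $\Rightarrow$ (1) is the easy direction and I would handle it first. The family $\mathcal{F}_1(\omega)$ is invariant under precomposition by Möbius automorphisms of $\mathbb{B}$ and by the rescalings $z \mapsto a + tz$ with $\mathbb{B}(a,t) \subset \mathbb{B}$. Indeed, the distortion inequality with $K=1$ and the condition $\|\omega\| \circ F = 1$ are both conformally invariant. Given $a \in \mathbb{B}$, I choose a Möbius automorphism $\phi_a$ with $\phi_a(0) = a$. For $F \in \mathcal{F}_1(\omega)$ the map $F\circ\phi_a$ is again in $\mathcal{F}_1(\omega)$, so (2) gives $\|D(F\circ\phi_a)\|(0) \le D$. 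Since $\|D\phi_a(0)\|$ is comparable to $1-|a|^2$, this yields $\|DF\|(a) \le D'(1-|a|)^{-1}$ uniformly in $F$. Hence the family is locally uniformly Lipschitz, so equicontinuous, and it is equibounded because $N$ is closed. Arzelà--Ascoli then gives normality. This direction uses only the $\mathcal{C}^1$-regularity recalled above (via \Cref{lemma-smith-maps-are-n-harmonic} and Hardt--Lin), so that $\|DF\|(z)$ makes sense pointwise.

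For (1) $\Rightarrow$ (2), I would proceed as follows.

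\textbf{Step 1: small energy near the origin.} By (1) and \Cref{corollary-normal-family}, normality gives a radius $r>0$ with $\sup_F E(F|_{\mathbb{B}(r)}) \le E_N$. Arguing as in the proof of \Cref{theorem-equicontinuous-family} (4), equicontinuity even allows me to shrink $r$ so that $\sup_F E(F|_{\mathbb{B}(r)}) \le n^{-n/2}E_N$. Then \Cref{lemma-smith-maps-are-n-harmonic} applies on $\mathbb{B}(r)$: every $F$ is an energy-minimizing $n$-harmonic map there, with uniformly small energy.

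\textbf{Step 2: uniform derivative bound.} I would invoke the quantitative form of Hardt--Lin's regularity for minimizers, namely an $\varepsilon$-regularity estimate of the form
\[
\sup_{\mathbb{B}(r/2)} \|DF\| \le C\, r^{-1}\, \mathcal{E}(F|_{\mathbb{B}(r)})^{1/n},
\]
valid once the energy lies below a threshold depending only on $N$ (closed, hence of bounded geometry) and $n$. This gives (2) with $s=r/2$.

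\textbf{Fallback for Step 2.} If the small-energy threshold from Hardt--Lin is not explicitly available in the required uniform form, I would argue by contradiction. Suppose $F_j \in \mathcal{F}_1(\omega)$ and $|z_j| \le s$ satisfy $\|DF_j\|(z_j) \to \infty$. By normality, I pass to a subsequence with $F_j \to F$ locally uniformly. The maps $F_j$ are uniformly Hölder by \Cref{corollary-normal-family}, and they are minimizers with uniformly small energy. Hardt--Lin's $\mathcal{C}^{1,\alpha}$ estimates depend only on the energy bound, the oscillation, and the target geometry, so they give uniform $\mathcal{C}^{1,\alpha}$ bounds on $\mathbb{B}(s)$. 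This contradicts $\|DF_j\|(z_j) \to \infty$.

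The main obstacle is exactly this uniformity: extracting from Hardt--Lin a derivative bound depending only on the small energy and on $(N,\omega)$, rather than on the individual map. The point to check is that the constants in the regularity theorem depend only on the compact target, the $n$-harmonic structure, and the energy threshold, and that the small-energy condition of \Cref{lemma-smith-maps-are-n-harmonic} supplies the minimizing property uniformly across the family.
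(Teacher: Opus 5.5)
Your proposal is correct. For (1) $\Rightarrow$ (2) you follow the paper's route. Normality gives, via \Cref{corollary-normal-family} and \Cref{theorem-equicontinuous-family}, uniformly small energy on a ball around the origin. \Cref{lemma-smith-maps-are-n-harmonic} then makes every curve an $n$-energy minimizer there, and Hardt--Lin's regularity theorem, with constants depending only on the closed target, yields the derivative bound.

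The paper handles the uniformity you worry about exactly as in your fallback. \Cref{theorem-modulus-of-continuity-estimates} makes the restricted family uniformly H\"older, and Hardt--Lin's Theorem 3.1 then gives a uniform H\"older estimate for $\|DF\|$ on $\mathbb{B}(\rho/4)$. Combined with the uniform energy bound, this bounds $\|DF\|(0)$; the paper also reduces the supremum over the ball to the value at the origin using bi-Lipschitz M\"obius maps. Phrased this way, no contradiction argument is needed. If you use the $\varepsilon$-regularity form instead, its threshold may be smaller than $n^{-n/2}E_N$. This is harmless, since equicontinuity lets you make the energy as small as you like.

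For (2) $\Rightarrow$ (1) your route is genuinely different. The paper turns the derivative bound into small energy $\sup_F E(F|_{\mathbb{B}(\rho)}) \leq n^{-n/2}E_N$ near the origin. It then transports this to every point using M\"obius invariance and comparisons between hyperbolic and Euclidean balls, and concludes with the energy criterion of \Cref{corollary-normal-family}. You instead transport the pointwise bound itself: $\|DF\|(a) = \|D(F\circ\phi_a)\|(0)/(1-|a|^2) \leq D/(1-|a|^2)$ for an orientation-preserving M\"obius map $\phi_a$ with $\phi_a(0)=a$. This gives locally uniform Lipschitz bounds, and normality follows from Arzel\`a--Ascoli since $N$ is closed.

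Your argument is shorter and bypasses the isoperimetric machinery behind \Cref{corollary-normal-family}. It needs only M\"obius invariance and the pointwise meaning of $\|DF\|$ from $\mathcal{C}^1$-regularity. The paper's version instead organizes both directions around a single energy condition at the origin, whose equivalence with normality it establishes once and uses twice.
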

    \begin{proof}
        We simplify notation by setting $\mathcal{F} = \mathcal{F}_1(\omega)$. In the subsequent argument, we use (orientation-preserving) Möbius automorphisms of the unit ball $\mathbb{B}$. For the start of the proof, we recall some basic facts from hyperbolic geometry.
        
        We recall that Möbius transformations send hyperbolic balls to hyperbolic balls (when equipping $\mathbb{B}$ with the standard hyperbolic metric). Furthermore, for each $z \in \mathbb{B}$, there exists a suitably normalized Möbius automorphism $T_z$ of $\mathbb{B}$ sending $z$ to $0$, cf. \cite[Chapter I, Section 2, pages 10--11]{vuorinen-1988-conformal-geometry-and-quasiregular-mappings}. These maps have the property that if $|z| \leq r$ for $r \in (0,1)$, then $T_z$ is $L$-bi-Lipschitz for $L = L(r)$ \cite[Chapter I, Section 1.39, pages 11--12]{vuorinen-1988-conformal-geometry-and-quasiregular-mappings}.
        
        For the comparison between hyperbolic balls $\mathbb{B}_{\mathrm{hyp}}(y_0,\rho)$ and Euclidean balls $\mathbb{B}(x_0,r)$ we need below, we refer the reader to \cite[Chapter I, Section 2, pages 24--27]{vuorinen-1988-conformal-geometry-and-quasiregular-mappings} for details. 
        We recall that each hyperbolic ball is Euclidean, in the sense that
        \begin{equation}\label{equation-equality-of-hyperbolic-and-euclidean-balls}
            \mathbb{B}_{\mathrm{hyp}}( y_0, \rho ) = \mathbb{B}( x_0, r )
        \end{equation}
        for some point $x_0 \in \mathbb{B}$ and radius $r >0$, and each Euclidean ball precompact in $\mathbb{B}$ is a hyperbolic ball. Moreover,
        \begin{align*}
            \mathbb{B}( y_0, a(1-|y_0|) ) \subset \mathbb{B}_{\mathrm{hyp}}( y_0, \rho ) \subset \mathbb{B}( y_0, A(1-|y_0| ) )
        \end{align*}
        where
        \begin{align*}
            a = \frac{ t (1+|y_0|) }{ 1+|y_0|t},
            \quad
            A = \frac{ t (1+|y_0|) }{ 1-|y_0|t },
            \quad\text{and}\quad
            t = \tanh\left( \frac{1}{2}\rho \right)
        \end{align*}
        for the hyperbolic tangent $\tanh$.

        Using these facts, observe that for each $\mathbb{B}(x_0,2R) \subset \mathbb{B}$, there exists a constant $c = c( |x_0|, R ) > 1$ such that if $r \in (0,c^{-1}R)$, then 
        \begin{align*}
            \mathbb{B}_{\mathrm{hyp}}( x_0, c^{-1} \rho ) \subset \mathbb{B}( x_0, r ) \subset \mathbb{B}_{\mathrm{hyp}}( x_0, \rho )
        \end{align*}
        for some radius $\rho = \rho(r) > 0$. Now $T_{x_0}( \mathbb{B}_{\mathrm{hyp}}( x_0, c^{-1} \rho ) )$ is a hyperbolic ball centered at $0$ and of radius $c^{-1}\rho$; similarly when the radius is replaced by $\rho$. Since $r \rightarrow 0^{+}$ implies $\rho \rightarrow 0^{+}$ above, we conclude that if there exists $x_0 \in \mathbb{B}$ and $\mathbb{B}( x_0, r ) \subset \mathbb{B}$ with
        \begin{align}\label{equation-assumption-on-energy-non-origin-centered}
            \sup_{ F \in \mathcal{F} } E( F|_{ \mathbb{B}( x_0, r ) } ) \leq n^{-n/2}E_N,
        \end{align}
        then
        \begin{align}\label{equation-assumption-on-energy}
            \sup_{ F \in \mathcal{F} } E( F|_{ \mathbb{B}(0,\rho) } ) \leq n^{-n/2}E_N
            \quad\text{for some $\rho \in (0,1)$.}
        \end{align}
        Conversely, if \eqref{equation-assumption-on-energy} holds, then for every $x_0 \in \mathbb{B}$, there exists $\mathbb{B}( x_0, r ) \subset \mathbb{B}$ for which \eqref{equation-assumption-on-energy-non-origin-centered} holds. This equivalence uses the additional fact that $\mathcal{F}$ is closed under precomposition by the Möbius automorphisms of $\mathbb{B}$ and the precomposition preserves energy. By \Cref{corollary-normal-family}, we see that the normality of $\mathcal{F}$ and the statement in \eqref{equation-assumption-on-energy} are equivalent.

        We start the proof of the equivalence, so suppose that (1) holds. Now, by \eqref{equation-assumption-on-energy} and \Cref{theorem-modulus-of-continuity-estimates}, there exist constants $A > 0$ and $\beta \in (0,1)$ such that 
        \begin{align*}
            d( F(x), F(y) ) \leq A ( n^{-n/2}E_N )^{1/n} \rho^{-\beta} |x-y|^{\beta} \quad\text{if $|x|,|y| < \rho/2$}
        \end{align*}
        for every $F \in \mathcal{F}$. Hence the family $\mathcal{F}' = \{ F \colon \mathbb{B}(\rho/2) \to ( N, \omega ) \mid F \in \mathcal{F} \}$ is uniformly Hölder and $n$-harmonic by \Cref{lemma-smith-maps-are-n-harmonic}. Since \(N\) is compact, by a special case of \cite[Theorem 3.1]{hardt-lin-1987-mappings-minimizing-the-lp-norm-of-the-gradient}, there exists an exponent $\alpha \in (0,1)$ and a constant $C > 0$ such that 
        \begin{align*}
            \left| \|DF\|(x) - \|DF\|(y) \right| \leq C |x-y|^{\alpha} \quad\text{if $|x|,|y| < \rho/4$.}
        \end{align*}
        Let $D = \sup_{ F \in \mathcal{F} } \sup_{|x|\leq \rho/4}\|DF\|(x)$.
        
        We claim that $D < \infty$. Indeed, the Möbius automorphisms $T_z$ for $z \in \mathbb{B}(\rho/4)$ are $L$-bi-Lipschitz for some $L = L(\rho)$ so $D$ is comparable to
        \begin{align*}
            D_{0} = \sup_{ F \in \mathcal{F} } \|DF\|(0).
        \end{align*}
        To see that $D_0$ is bounded, notice that by the choice of $\alpha$ and $C$, it holds that
        \begin{align*}
            \left| \left( E( F|_{ \mathbb{B}(\rho/4) } ) \right)^{ \frac{1}{n} } - \left( \omega_n\left( \frac{\rho}{4} \right)^n \|DF\|^{n}(0) \right)^{ \frac{1}{n} } \right|
            \leq
            C \omega_n^{ \frac{1}{n} } \left( \frac{\rho}{4} \right)^{1+\alpha}.
        \end{align*}
        To finish, it suffices to note that $\sup_{ F \in \mathcal{F} } E( F|_{ \mathbb{B}(\rho/4) } )$ is finite by \eqref{equation-assumption-on-energy}.

        Finally, if the bound on the differential holds, clearly \eqref{equation-assumption-on-energy} holds for a sufficiently small $\rho$ as well. By the above, this is equivalent to the normality of $\mathcal{F}$.
    \end{proof}

    \begin{proof}[Proof of \Cref{theorem-characterization-of-hyperbolicity-calibration}]
    Our strategy is the following: We prove that $\omega$-hyperbolicity implies $R_\omega$-hyperbolicity. We also show that if $\omega$-hyperbolicity does not hold, then neither does $R_\omega$-hyperbolicity nor $K_{\omega}$-hyperbolicity.  Since $R_\omega$-hyperbolicity clearly implies $K_\omega$-hyperbolicity, this demonstrates that all three notions are equivalent.

    We first prove that $\omega$-hyperbolicity implies $R_\omega$-hyperbolicity via contraposition. To this end, suppose that \eqref{equation:royden-strong} fails. Then, since $\mathcal{K}_{(N,\omega)}$ is positively homogeneous \cite[Proposition 5.2]{broder-iliashenko-madnick-2025-hyperbolicity-and-schwarz-lemmas-in-calibrated-geometry}, there exist unit vectors $v_j \in T_{ x_j }N$ for $x_j \in N$ such that
    \begin{align*}
        \lim_{ j \to \infty } \mathcal{K}_{ (N,\omega) }( v_j ) = 0.
    \end{align*}
    By definition, we find $( F_j )_j$ in $\mathcal{F}_1(\omega)$ with $( DF_j )_0(w_j) = a_{j}^{-1}v_j$ for some unit vectors $w_j$ and constants $a_j \rightarrow 0^{+}$. Since $F_j$ is a conformal curve, it holds that
    \begin{align*}
        a_j^{-n} = \|DF_j\|^{n}(0).
    \end{align*}
    As $a_j \rightarrow 0^{+}$, it follows that $\mathcal{F}_1( \omega )$ cannot be normal by \Cref{proposition-equicontinuity-results}. Thus $\omega$-hyperbolicity fails by \Cref{theorem-energy-rescaling-principle}, where we recall that conformal curves are also closed under local uniform convergence.

    Next, suppose that $(N,\omega)$ is not $\omega$-hyperbolic, i.e. there exists a non-constant conformal curve $F \colon \R^n \to ( N,\omega)$. Hence there exists a point, say, the origin, where the differential of $F$ is non-degenerate. Then, letting $v = (DF)_0(e_1)$ and $F_r = F( \cdot / r )$, the chain rule implies $(DF_r)_0(e_1) = v/r$. Letting $r \rightarrow 0^{+}$, by definition, we have $\mathcal{K}_{(N,\omega)}(v) = 0$. In fact, clearly $\mathcal{K}_{(N,\omega)}$ is degenerate on the image of $(DF)_0$ (which is an $n$-dimensional subspace of $T_{F(0)}N$). Now, consider the open set $U = \{ x \in \R^n \colon (DF)_x \neq 0 \}$ and its image $D = F( U )$, and let $\gamma \colon [a,b] \to U$ be a piecewise $\mathcal{C}^{1}$-regular curve with $x \coloneqq ( F \circ \gamma )(a) \neq ( F \circ \gamma )(b) \eqqcolon y$. The argument above implies that $\mathcal{K}_{(N,\omega)}( ( F \circ \gamma )'(t) ) = 0$ for those  $t \in [a,b]$ at which $\gamma$ is smooth so $d_{ (N,\omega) }( x, y ) = 0$. Hence $K_\omega$-hyperbolicity fails. Obviously the $R_\omega$-hyperbolicity also fails.
    \end{proof}

\section{Examples of quasiregular ellipticity}\label{section-examples}
    In this section, we describe examples of quasiregularly elliptic \(n\)-calibrated \(m\)-manifolds $( N, \omega )$; here \(2 \leq n \leq m\).
    We mainly focus on the lower-dimensional cases: $2 \leq n \leq 4$ and $6 \leq m \leq 8$. To generate the examples below, we use the following elementary lemma.
\begin{lemma}\label{lemma-isometric-inclusions-and-smith-maps}
    Let $(N, \omega)$ be an $n$-calibrated $m$-manifold and let $M$ be an oriented $n$-manifold. If $\Sigma \subset (N,\omega)$ is \(\omega\)-calibrated and $f \colon M \to \Sigma$ is a $K$-quasiregular mapping, then \(\iota \circ f \colon M \to (N,\omega)\) is a \(K\)-quasiregular curve where $\iota\colon \Sigma \xhookrightarrow{} (N,\omega)$ is the inclusion. 
    In particular, if \((\Sigma,\vol_{\Sigma})\) is $K$-quasiregularly elliptic, then so is \((N,\omega)\).
\end{lemma}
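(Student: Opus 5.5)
The argument is a pointwise verification of the distortion inequality \eqref{equation-quasiregular-curve-distortion-inequality} for $F = \iota \circ f$. First, regularity: $\Sigma$ is an embedded (hence properly, after restricting to a neighbourhood if needed) submanifold of $N$. The inclusion $\iota$ is smooth and $1$-Lipschitz with $\|D\iota\| = 1$. Composing with a Nash embedding of $N$ shows that $\iota \circ f$ is continuous and lies in $W^{1,n}_{\loc}(M,N)$. The chain rule gives $D(\iota\circ f) = D\iota \circ Df$ almost everywhere, so $\|D(\iota \circ f)\| = \|Df\|$ almost everywhere, because $\iota$ is an isometric immersion.

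Next we compare the pullbacks. Saying that $\Sigma$ is $\omega$-calibrated means that $\iota^{*}\omega = \vol_\Sigma$ for the orientation of $\Sigma$. Equivalently, $\omega(e_1\wedge\dots\wedge e_n) = 1$ for every oriented orthonormal frame of $T_y\Sigma$, and in particular $\|\omega\|(y) = 1$ on $\Sigma$, since the comass is at most one. Then almost everywhere
\[
    (\iota\circ f)^{*}\omega = f^{*}(\iota^{*}\omega) = f^{*}\vol_\Sigma .
\]
The $K$-quasiregularity of $f$ gives $\|Df\|^n \le K \star f^{*}\vol_\Sigma$ almost everywhere. Combining this with $\|\omega\|\circ(\iota\circ f) = 1$ yields
\[
    (\|\omega\|\circ F)\,\|DF\|^n = \|Df\|^n \le K \star f^{*}\vol_\Sigma = K \star F^{*}\omega ,
\]
which is exactly the required distortion inequality.

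For the last claim, let $f\colon \R^n \to \Sigma$ be a non-constant $K$-quasiregular map. Then $\iota\circ f$ is a $K$-quasiregular curve $\R^n \to (N,\omega)$ by the first part, and it is non-constant because $\iota$ is injective. Hence $(N,\omega)$ is $K$-quasiregularly elliptic.

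The only delicate step is the pullback identity $(\iota\circ f)^{*}\omega = f^{*}(\iota^{*}\omega)$ for Sobolev maps, together with membership in $W^{1,n}_{\loc}(M,N)$. Both are routine, since $\iota$ is smooth and the pullback is defined pointwise from the weak differential. If $\Sigma$ is not closed in $N$, one can work locally near the compact set $f(\overline U)$ for precompact $U \subset M$.
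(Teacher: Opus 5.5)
Your proposal is correct and is exactly the argument the paper has in mind. The paper writes out no proof: it calls the lemma elementary and only remarks that $\Sigma$ being $\omega$-calibrated is equivalent to the inclusion $\iota$ being a conformal curve, that is, $\iota^{*}\omega = \vol_{\Sigma}$ and $\|\omega\| = 1$ on $\Sigma$. That is precisely the pointwise identity you combine with $\|D(\iota\circ f)\| = \|Df\|$ and the distortion inequality of $f$. You also orient $\Sigma$ so that $\iota^{*}\omega = \vol_{\Sigma}$, which is the convention implicit in the paper's remark.
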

    We recall that an $n$-dimensional submanifold $\Sigma \subset (N,\omega)$ is \emph{$\omega$-calibrated} if, for each $x \in \Sigma$, the tangent space $T_x\Sigma \subset T_xN$ has an orthonormal basis $v_1,\dots,v_n$ for which $\omega( v_1 \wedge \dots \wedge v_n ) =1$. Equivalently, the inclusion $\Sigma \xhookrightarrow{} (N,\omega)$ is a conformal curve. 

The following theorem by Alexander from piecewise linear geometry gives important examples of quasiregular mappings \cite{alexander-1920-note-on-riemann-spaces}.
\begin{theorem}\label{theorem-alexander-branched-cover}
    Let $\Sigma$ be a closed and oriented (Riemannian) $n$-manifold. Then there exists a surjective quasiregular mapping $\Sigma \to \S^n$ that has bounded length distortion.
\end{theorem}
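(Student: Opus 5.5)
The plan is to follow Alexander's classical construction: triangulate $\Sigma$, then map the simplices alternately onto the two hemispheres of a standard simplicial sphere. Take a smooth triangulation of the closed oriented manifold $\Sigma$ (Whitehead/Cairns); it is bi-Lipschitz to a finite simplicial complex $|K|$ with piecewise flat metric. Pass to the barycentric subdivision $K'$. Its vertices are barycenters of simplices of $K$, so each vertex carries a label $\ell(v)\in\{0,\dots,n\}$, namely the dimension of the simplex it is the barycenter of. Every $n$-simplex of $K'$ then has exactly one vertex with each label.

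Next fix a model $n$-simplex $\Delta=[e_0,\dots,e_n]$ and the sphere $\S^n$ realized as the boundary of an $(n+1)$-simplex, or equivalently as two copies $\Delta_+$, $\Delta_-$ of $\Delta$ glued along $\partial\Delta$. For each $n$-simplex $\sigma$ of $K'$, let $\phi_\sigma$ be the affine map sending the vertex labelled $i$ to $e_i$. Adjacent $n$-simplices share a face with identical labels, so these maps agree on common faces. Because $\Sigma$ is an oriented pseudomanifold, the $n$-simplices of $K'$ split into two classes: those where $\phi_\sigma$ preserves orientation and those where it reverses it. Adjacent simplices lie on opposite sides of their common face and carry the same face labelling, so they always fall in different classes. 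Send orientation-preserving simplices to $\Delta_+$ via $\phi_\sigma$, and orientation-reversing ones to $\Delta_-$ composed with the reflection identifying $\Delta_-$ with $\Delta$; the reflection is chosen so that the result is orientation-preserving into $\S^n$. Because neighbours land in opposite hemispheres and agree on the shared face, the pieces glue to a continuous map $f\colon\Sigma\to\S^n$, and it is simplexwise affine and orientation-preserving.

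It then remains to verify the estimates. There are finitely many simplices and each affine piece is nondegenerate, so $f$ is Lipschitz with derivative bounds $\|Df\|\le L$ and $J_f\ge L^{-n}>0$ on every open simplex, after accounting for the bi-Lipschitz comparison with the Riemannian metric on $\Sigma$ and the round metric on $\S^n$. This yields $\|Df\|^n\le K J_f$ almost everywhere, so $f$ is quasiregular. It also has bounded length distortion, since $f$ is a local homeomorphism away from the codimension-$2$ skeleton and is uniformly bi-Lipschitz on each simplex; BLD is checked through path lengths, simplex by simplex. Surjectivity holds because $\Delta_+$ is covered as soon as one simplex is orientation-preserving, and $\Delta_-$ as soon as one neighbour is not.

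I expect the main obstacle to be the orientation and gluing bookkeeping: the reflection must be chosen consistently so that every piece maps to $\S^n$ orientation-preservingly and the identifications along the equator $\partial\Delta$ match. The key fact to nail down is that crossing a codimension-one face always flips the sign of $\phi_\sigma$ relative to the orientation of $\Sigma$.
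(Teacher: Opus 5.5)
Your proposal is correct and is Alexander's classical construction: barycentric subdivision with dimension labels, a checkerboard colouring of the $n$-simplices determined by orientation, and folding onto the double $\Delta_+\cup_{\partial\Delta}\Delta_-$. The paper gives no proof of its own and simply invokes this construction by citing Alexander and the introduction of Heinonen--Rickman. The one step that needs more care than you give it is the lower length bound in the BLD property for paths that meet the branch set or are non-rectifiable; the cleanest way to obtain it is the Martio--V\"ais\"al\"a characterization of BLD maps as discrete, open, sense-preserving maps with $L^{-1}|h|\le|Df(x)h|\le L|h|$ almost everywhere.
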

    Note that \Cref{theorem-alexander-branched-cover} is not the original formulation due to Alexander but a direct consequence; we refer to the introduction of \cite{heinonen-rickman-2002-geometric-branched-covers-between-generalized-manifolds} for an in-depth discussion. We mainly apply \Cref{theorem-alexander-branched-cover} for the torus $\Sigma = \mathbb{T}^n$, in which case there is an elementary construction.
    
    We recall that a map $f \colon M \to N$ between manifolds has \emph{bounded length distortion} if there exists a constant $L > 0$ such that for every curve $\gamma \colon [a,b] \to M$, the length $\ell( f \circ \gamma )$ satisfies $L^{-1} \ell( \gamma ) \leq \ell( f \circ \gamma ) \leq L \ell( \gamma )$. The reason we highlight this property is that the product map of quasiregular mappings that have bounded length distortion is also a quasiregular map with bounded length distortion. Thus the subsequent examples and discussion easily extend to product-type examples.

    To apply \Cref{theorem-alexander-branched-cover} and \Cref{lemma-isometric-inclusions-and-smith-maps}, the next task is to find examples of calibrated submanifolds. In particular, we focus on calibrated manifolds with special holonomy groups from Berger's list such as Calabi--Yau manifolds, $G_2$ manifolds, and Kähler manifolds; cf. \cite[Chapter 3]{joyce-2007-riemannian-holonomy-groups-and-calibrated-geometry}.

    To generate our first examples, we recall the following theorem due to Bryant.
\begin{theorem}[{\cite[Theorem 5]{bryant-2000-calibrated-embeddings-in-the-special-lagrangian-and-coassociative-cases}}]\label{theorem-bryant-calabi-yau-special-lagrangian}
    Let $M$ be a closed real-analytic oriented  3-manifold. Then there exists a Calabi--Yau 6-manifold $N$ and an orientation-preserving isometric embedding $M \xhookrightarrow{} N$ such that the image is calibrated by the Special Lagrangian calibration $\omega_{\SL}$. 
\end{theorem}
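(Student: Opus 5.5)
The plan is to follow Bryant's approach: realize the Calabi--Yau condition as a real-analytic exterior differential system and solve it with initial data on $M$ using the Cartan--Kähler theorem. This is why real-analyticity of $M$ is assumed. First I would fix the local model. An $\SU(3)$-structure on a $6$-manifold is encoded by a pair $(\omega_0,\Omega)$ of a real $2$-form and a complex $3$-form that are pointwise equivalent to the flat model on $\C^3$. It is torsion-free, hence Ricci-flat Calabi--Yau with induced metric, exactly when
\begin{align*}
    d\omega_0 = 0 \quad\text{and}\quad d\Omega = 0 .
\end{align*}
A $3$-dimensional submanifold $\Sigma$ is Special Lagrangian, i.e.\ calibrated by $\omega_{\SL} = \mathrm{Re}\,\Omega$, if and only if $\omega_0|_\Sigma = 0$ and $\mathrm{Im}\,\Omega|_\Sigma = 0$; then $\mathrm{Re}\,\Omega|_\Sigma = \vol_\Sigma$ for the induced metric.

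Next I would build the ambient space and the initial data. Since the normal bundle of a Special Lagrangian is isomorphic to its tangent bundle via $J$, and closed oriented $3$-manifolds are parallelizable, I would take $N$ to be a neighbourhood of the zero section in $TM \cong M \times \R^3$. On $M \subset N$ I prescribe the $\SU(3)$-structure pointwise along $M$. Concretely, I choose a real-analytic $g$-orthonormal coframe $(e^1,e^2,e^3)$ of $M$ and extend it by the dual normal directions $(f^1,f^2,f^3)$. Along $M$ the structure is then modelled on $\omega_0=\sum e^i\wedge f^i$ and $\Omega=(e^1+if^1)\wedge(e^2+if^2)\wedge(e^3+if^3)$. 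By construction the pullbacks satisfy $\omega_0|_M=0$, $\mathrm{Im}\,\Omega|_M=0$, $\mathrm{Re}\,\Omega|_M=\vol_{M}$, and the induced metric on $M$ is $g$. The problem is thus to extend this to a closed $\SU(3)$-structure on a neighbourhood of $M$ whose restriction to $M$ is as prescribed.

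The main step, and the main obstacle, is proving that this extension problem is solvable. I would formulate it on the bundle of $\SU(3)$-coframes over $N$ as an exterior differential system generated by the closedness conditions together with the structure equations. I would then verify that it is involutive, by computing the Cartan characters and checking Cartan's test. The crucial point is that the $3$-dimensional integral element determined by $TM$ (with the data above) is Kähler-regular, i.e.\ generic enough for the Cartan--Kähler theorem to apply. I expect the character computation to be the heavy part. Since $\SU(3)$-torsion-freeness is an overdetermined first-order system, involutivity holds only after the compatibility conditions (essentially $d^2=0$) have been incorporated. I would first try to reduce the problem to a Hitchin-type evolution in which the closed stable forms are propagated off $M$ in one normal direction at a time, and apply Cauchy--Kovalevskaya at each stage.

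Once involutivity and regularity are established, Cartan--Kähler produces a real-analytic integral manifold through the given initial data. This is a torsion-free $\SU(3)$-structure on a neighbourhood $N$ of $M$ such that the inclusion $M \hookrightarrow N$ is an isometric, orientation-preserving embedding with $M$ calibrated by $\omega_{\SL}$. Shrinking $N$ if necessary, $N$ is a Calabi--Yau $6$-manifold, in general non-compact, which is the claim.
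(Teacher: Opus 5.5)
\textbf{There is a genuine gap: the proposal stops where the proof begins.}

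The paper does not prove this statement. It imports it from Bryant, where it is proved with Cartan--Kähler methods in the real-analytic category. So your framework is the right one, and your set-up is sound:
\begin{itemize}
\item torsion-freeness of $(\omega_0,\Omega)$ is $d\omega_0=0=d\Omega$;
\item $\omega_{\SL}$-calibrated $3$-folds are those with $\omega_0|_\Sigma=0=\mathrm{Im}\,\Omega|_\Sigma$ (suitably oriented);
\item your lift of $M$ along the zero section of $TM\cong M\times\R^3$ is an integral manifold of the system generated by $d\omega_0$ and $d\Omega$ on the bundle of $\mathrm{SU}(3)$-structures, because the pullback of $\omega_0$ to $M$ vanishes.
\end{itemize}

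\textbf{Involutivity and regularity are only announced.} Everything rests on claims you phrase as ``I would verify'' and ``I expect''. You need that a suitably formulated system for closed $\mathrm{SU}(3)$-structures passes Cartan's test. You also need that the integral elements tangent to the lift of $M$ are Kähler-regular and lie in a regular flag, i.e.\ that special Lagrangian initial data are non-characteristic. Neither is automatic. Special Lagrangian $3$-planes form a $5$-dimensional orbit in the $9$-dimensional Grassmannian of $3$-planes in $\R^6$, so they are in special position, and Cartan--Kähler gives nothing at non-regular integral elements.

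\textbf{Your fallback does not avoid this.} A Hitchin-type flow only propagates from a $5$-dimensional hypersurface whose induced data already satisfy the constraints: the pullbacks of $\omega_0$ and $\Omega$ must be closed there. The torsion-free equations give no evolution equation for the steps $3\to4$ and $4\to5$. At those steps you must choose the extension and then prove that the constraints come out right, which is the involutivity statement in another guise.

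\textbf{Globalization is missing.} Your last paragraph overstates what Cartan--Kähler delivers. The theorem is local, and at each extension step the solution is unique only after choosing a restraining (transverse) submanifold. To get one Calabi--Yau structure on a neighbourhood of the whole closed $M$, you must:
\begin{itemize}
\item make these choices globally and real-analytically along $M$ (this is where the global real-analytic coframe from parallelizability is genuinely needed, not merely to trivialize the normal bundle);
\item use uniqueness to patch the local solutions;
\item use compactness to obtain a uniform neighbourhood;
\item check that the resulting $6$-dimensional integral manifold is still a section, i.e.\ a genuine $\mathrm{SU}(3)$-structure.
\end{itemize}
Without the involutivity/regularity computation and this globalization argument, you have an accurate description of Bryant's strategy, not a proof.
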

    The Special Lagrangian calibration $\omega_{\SL}$ has comass equal to one everywhere by definition of Calabi--Yau manifolds and we refer to \cite{bryant-2000-calibrated-embeddings-in-the-special-lagrangian-and-coassociative-cases} for details on the construction and definitions. 
    
    Next, we compare this to Jormakka's classification of closed and oriented quasiregularly elliptic 3-manifolds \cite{jormakka-1988-the-existence-of-quasiregular-mappings-from-r3-to-closed-orientable-3-manifolds}.  
\begin{theorem}\label{theorem-jormakka}
    Up to an orientation-preserving diffeomorphism, the only closed, oriented quasiregularly elliptic 3-manifolds are $\S^3$, $\S^1 \times \S^2$, the torus $\mathbb{T}^3$, and their oriented quotients.
\end{theorem}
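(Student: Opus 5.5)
The statement is Jormakka's classification \cite{jormakka-1988-the-existence-of-quasiregular-mappings-from-r3-to-closed-orientable-3-manifolds}, which relies on geometrization. I would prove the two directions separately: existence for the listed manifolds, and an obstruction for all others. The obstruction is a growth bound on the fundamental group, fed into the geometrization classification.

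\emph{Existence.} For $\mathbb{T}^3$, the universal covering $\R^3 \to \mathbb{T}^3$ is a local isometry, hence $1$-quasiregular and non-constant. For $\S^3$, I would take a Zorich-type map $\R^3 \to \S^3$. Alternatively, compose the exponential-type map $\R^3 \to \mathbb{T}^3$ with the bounded length distortion branched cover $\mathbb{T}^3 \to \S^3$ from \Cref{theorem-alexander-branched-cover}.

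For $\S^1 \times \S^2$, I would build a map in two steps:
\begin{itemize}
\item a quasiregular map $\R^3 \to \S^2 \times \R$, for instance by composing $\R^3 \to \mathbb{T}^3$ with a bounded length distortion branched cover $\mathbb{T}^3 \to \S^2 \times \S^1$ produced via Alexander's construction;
\item a covering $\S^2 \times \R \to \S^2 \times \S^1$.
\end{itemize}
For an oriented quotient $Q = X/\Gamma$ of one of these model manifolds $X$, I would post-compose the map $\R^3 \to X$ with the finite covering $X \to Q$, which is a local isometry. Quasiregularity is preserved, so each listed manifold is quasiregularly elliptic.

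\emph{Obstruction.} Let $N$ be a closed oriented quasiregularly elliptic $3$-manifold. By Varopoulos' theorem, in the form used by Bonk--Heinonen, $\pi_1(N)$ has polynomial growth of degree at most $3$. The argument lifts the entire quasiregular map to the universal cover and uses a capacity/isoperimetric estimate: exponential or high polynomial volume growth of $\widetilde N$ would force the lifted map to be constant.

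By Gromov's theorem, $\pi_1(N)$ is then virtually nilpotent of growth degree at most $3$. I would run through the prime decomposition and the geometric pieces:
\begin{itemize}
\item A nontrivial connected sum has $\pi_1$ a free product. This has exponential growth unless it is $\mathbb{Z}/2 * \mathbb{Z}/2$, which corresponds to $\mathbb{RP}^3 \# \mathbb{RP}^3$, an $\S^2\times\R$-manifold.
\item Hyperbolic, Sol, $\widetilde{\SL_2}$ and $\mathbb{H}^2\times\R$ geometries, as well as non-geometric graph manifolds, give exponential growth.
\item Nil manifolds have growth degree $4 > 3$.
\end{itemize}
What remains are the $\S^3$, $\S^2\times\R$ and $E^3$ geometries, i.e. quotients of $\S^3$, $\S^1\times\S^2$ and $\mathbb{T}^3$.

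\emph{Main obstacle.} The hard step is the Varopoulos-type growth bound for quasiregular maps. This needs the lifted map $\R^3 \to \widetilde N$ together with a sharp modulus comparison between Euclidean annuli and large balls in $\widetilde N$. The remaining case analysis is standard, given geometrization.
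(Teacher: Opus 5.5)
The paper gives no proof of this statement: it is quoted from Jormakka, and the introduction indicates that it rests on Perelman's resolution of geometrization. Your outline follows the standard route to that result and is sound.

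Your existence half is essentially what the paper itself does in the proof of \Cref{corollary-reaping-the-harvest}:
\begin{itemize}
\item Alexander's bounded length distortion maps $\mathbb{T}^n \to \S^n$;
\item the product $(t,z) \mapsto (t, f_2(z))$ for $\S^1 \times \S^2$;
\item the local isometry $\R^3 \to \mathbb{T}^3$;
\item covering space theory for the quotients.
\end{itemize}
Your obstruction half (parabolicity and growth bound, then Gromov and geometrization) supplies what the paper takes from the citation.

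A few points need tidying; none of them is a real gap.

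\begin{enumerate}
\item Your two-step construction for $\S^1 \times \S^2$ is garbled. The composite $\R^3 \to \mathbb{T}^3 \to \S^2 \times \S^1$ already lands in $\S^2 \times \S^1$, so the detour through $\S^2 \times \R$ and the covering is superfluous. Also, the branched cover $\mathbb{T}^3 \to \S^1 \times \S^2$ does not come from Alexander's theorem, which only targets spheres; it is $\mathrm{id}_{\S^1} \times f_2$. Bounded length distortion of $f_2$ is essential here. If $f_2$ is merely quasiregular and its Jacobian tends to zero at branch points (e.g.\ the Weierstrass function), then the product with the identity is not quasiregular.

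\item The growth step is cleanest stated as follows. Since $\R^3$ is $3$-parabolic, V\"ais\"al\"a's inequality for maximal path liftings of the lift $\tilde f \colon \R^3 \to \widetilde N$ shows that $\widetilde N$ is $3$-parabolic. Varopoulos' theorem for covers of closed manifolds then converts $3$-parabolicity into polynomial growth of degree at most $3$.

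\item In the case analysis, the connected-sum step uses the Poincar\'e conjecture, so that nontrivial summands have nontrivial $\pi_1$. It also uses elliptization, so that a summand with $\pi_1 = \mathbb{Z}/2$ is $\R P^3$. The non-geometric irreducible case must include manifolds with hyperbolic JSJ pieces, not only graph manifolds; these also contain free subgroups of rank two.
\end{enumerate}
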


\begin{corollary}\label{corollary-reaping-the-harvest}
    Let $M$ be \(\S^3\), \(\S^1 \times \S^2\), \(\mathbb{T}^3\), or an oriented quotient thereof. Then there exists a Calabi--Yau 6-manifold $N$ and a quasiregular curve $F\colon \R^3 \to (N, \omega_{\SL})$, with bounded length distortion, whose image \(F(\mathbb{R}^3)\) is isometric to $M$, and which factors through a torus $\mathbb{T}^3$. In particular, the corresponding $(N,\omega_{\SL})$ are quasiregularly elliptic. 
\end{corollary}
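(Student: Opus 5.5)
We build $F$ by composing three maps: a covering map $\R^3 \to \mathbb{T}^3$, a quasiregular map $\mathbb{T}^3 \to M$ of bounded length distortion, and the calibrated embedding of $M$ into a Calabi--Yau manifold. Each step is routine except possibly the middle one.

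\emph{The embedding.} Every closed smooth oriented $3$-manifold admits a real-analytic structure compatible with its orientation. We may therefore equip $M$ with a real-analytic Riemannian metric. \Cref{theorem-bryant-calabi-yau-special-lagrangian} then gives a Calabi--Yau $6$-manifold $N$ and an orientation-preserving isometric embedding $\iota \colon M \xhookrightarrow{} N$ whose image is calibrated by $\omega_{\SL}$. In particular $\iota(M)$ is isometric to $M$. Since $\|\omega_{\SL}\| \equiv 1$, the pair $(N,\omega_{\SL})$ is a $3$-calibrated $6$-manifold.

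\emph{A map from the torus onto $M$.} We need a surjective quasiregular map $h \colon \mathbb{T}^3 \to M$ with bounded length distortion. \Cref{theorem-alexander-branched-cover} is the wrong way round for this: applied to $\Sigma = \mathbb{T}^3$ it only yields $\mathbb{T}^3 \to \S^3$, and by composing with the universal cover (which is a local isometry) it gives $\mathbb{T}^3 \to \S^3 \to \S^3/\Gamma$ for every spherical quotient. The torus case is $h = \mathrm{id}$. Flat quotients $\mathbb{T}^3/\Gamma$ and quotients of $\S^1 \times \S^2$ are reached through finite covers: $\mathbb{T}^3 \to \mathbb{T}^3/\Gamma$ is itself a covering map, and a finite cover $\S^1 \times \S^2 \to (\S^1 \times \S^2)/\Gamma$ is a local isometry after pulling back the metric. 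For $\S^1 \times \S^2$ itself we take $\mathbb{T}^3 = \S^1 \times \mathbb{T}^2$ and compose the identity on $\S^1$ with an Alexander map $\mathbb{T}^2 \to \S^2$. This product remains quasiregular with bounded length distortion by the product remark following \Cref{theorem-alexander-branched-cover}.

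\emph{Composing and passing to the limit.} Coverings, local isometries and products of bounded-length-distortion quasiregular maps are again of this type, since distortion constants multiply. Hence in every case $h$ is surjective, quasiregular and of bounded length distortion. The standard covering $\pi \colon \R^3 \to \mathbb{T}^3$ is a local isometry, so $F = \iota \circ h \circ \pi$ is quasiregular into $\iota(M)$ with bounded length distortion, and $F$ factors through $\mathbb{T}^3$. Moreover $F(\R^3) = \iota(M)$, because $h$ is surjective.

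Since $\iota(M)$ is $\omega_{\SL}$-calibrated, \Cref{lemma-isometric-inclusions-and-smith-maps} upgrades $F$ to a $K$-quasiregular curve into $(N,\omega_{\SL})$. The map $F$ is non-constant, so $(N,\omega_{\SL})$ is $K$-quasiregularly elliptic. The step I expect to be the main obstacle is the case-by-case construction of $h$ for all quotients. If it proves troublesome, the cleanest fallback is to quote the statement behind \Cref{theorem-jormakka}: Jormakka's proof constructs exactly such torus-factoring maps of bounded length distortion onto each listed manifold.
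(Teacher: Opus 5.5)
Your proposal is correct and is essentially the paper's proof. The paper uses the same ingredients: Alexander's map $\mathbb{T}^3 \to \S^3$, the product $\mathrm{id}_{\S^1} \times f_2 \colon \S^1 \times \mathbb{T}^2 \to \S^1 \times \S^2$, the identity on $\mathbb{T}^3$, composition with the finite quotient coverings, Bryant's calibrated embedding, precomposition with the local isometry $\R^3 \to \mathbb{T}^3$, and \Cref{lemma-isometric-inclusions-and-smith-maps}. The closing appeal to Jormakka is unnecessary, since your main argument already covers every case, and its claim about torus-factoring maps is not something you should rely on.
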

\begin{proof}
    By \Cref{theorem-alexander-branched-cover}, there are quasiregular maps $f_n \colon \mathbb{T}^n \to \S^n$ that have bounded length distortion. This and \Cref{theorem-bryant-calabi-yau-special-lagrangian} imply the conclusion when $M$ is the sphere $\S^3$. Furthermore, clearly the map $f \colon \mathbb{T}^3 = \S^1 \times \mathbb{T}^2 \to \S^1 \times \S^2$, $(t,z) \mapsto (t, f_2(z))$ and the identity map $\mathbb{T}^3 \to \mathbb{T}^3$ provide examples of quasiregular mappings that have bounded length distortion. Clearly the oriented quotients of these spaces admit such maps as well by elementary covering space theory. The claim follows by applying this observation and \Cref{theorem-bryant-calabi-yau-special-lagrangian}. Note that the quasiregular ellipticity follows by considering a local isometric covering map $\R^3 \to \mathbb{T}^3$.
\end{proof}
    \begin{remark}
    Bryant's examples of Calabi--Yau manifolds are non-compact, and thus \Cref{corollary-reaping-the-harvest} gives examples of non-compact quasiregularly elliptic Calabi--Yau 6-manifolds \((N,\omega_{\SL})\). As the entire quasiregular curves produced in these examples factor through a torus, these calibrated manifolds do not satisfy Liouville's theorem in the sense of \Cref{theorem-liouville-property-equivalent-normality} (1).
    \end{remark}
    The following well-known proposition shows that the class of ``conformally elliptic'' manifolds is more restrictive than that of quasiregularly elliptic manifolds. In fact, examples already occur among the diffeomorphism equivalence classes of manifolds from Jormakka's classification.
\begin{proposition}\label{proposition-trivial-distortion-examples}
    Suppose that $M$ is an oriented $n$-manifold for which there exists a non-constant $1$-quasiregular map $F \colon \R^n \to M$ for $n \geq 3$. Then $F$ is smooth and its differential is non-degenerate. Furthermore, either $F$ is a smooth conformal covering map of $M$, or it extends to a smooth covering map $\mathbb{S}^{n} \simeq \mathbb{R}^n \cup \{\infty\} \to M$. In particular, $M$ is conformally flat.
\end{proposition}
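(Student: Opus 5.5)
The argument runs through Liouville's theorem for conformal maps in dimension $n \geq 3$. We may assume $M$ is connected. Since $F$ is $1$-quasiregular, it is locally conformal away from its branch set. The first step is to show that the branch set is empty. A $1$-quasiregular map $\R^n \to M$ with $n \geq 3$ is, in local conformal charts, a $1$-quasiregular map between domains of $\R^n$ equipped with smooth conformally distorted metrics. Rather than invoking flatness of $M$ at this point, I use the classical fact that non-constant $1$-quasiregular maps into Riemannian $n$-manifolds are smooth and have non-degenerate differential when $n \geq 3$: the Ferrand/Lelong-Ferrand regularity and Gehring--Reshetnyak Liouville theory for conformal maps between Riemannian manifolds. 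Concretely, by Reshetnyak's theorem $F$ is discrete and open, and the branch set has topological dimension at most $n-2$ and measure zero. Near a point $x_0$ with $y_0 = F(x_0)$, the local $n$-harmonic structure of $1$-quasiregular maps together with the Liouville rigidity of Gehring (in its Riemannian version) forces local injectivity. I expect this to be the main obstacle, and I would cite it rather than reprove it. Given local injectivity, $F$ is a local homeomorphism that is $1$-quasiconformal, so by the Lelong-Ferrand regularity theorem it is a smooth conformal local diffeomorphism.

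Next, conformal flatness of $M$ follows. Each point $y \in F(\R^n)$ has a neighbourhood $V$ on which a local inverse of $F$ is a smooth conformal diffeomorphism onto an open subset of $\R^n$. Since $F$ is open and $M$ is connected, I still need $F(\R^n)=M$. That surjectivity comes from the covering argument below, and once it is in place every point of $M$ has a conformally flat chart.

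For the covering statement, I pull back the metric of $M$ to get $g = F^{*}g_M = e^{2u} g_{\R^n}$, a smooth metric on $\R^n$ conformal to the Euclidean one, for which $F$ is a local isometry. There are two cases.

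If $(\R^n, g)$ is complete, a local isometry from a complete Riemannian manifold to a connected one is a covering map by the standard path-lifting argument. So $F$ is a smooth conformal covering of $M$.

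If it is incomplete, completeness fails either at infinity or not at all in a usable way. Here I use the developing map: $M$ is conformally flat and $F$ is a conformal immersion, so composing local inverses of $F$ with conformal charts of $M$ and applying Liouville's theorem (every conformal map between domains of $\S^n$ is Möbius for $n \geq 3$) shows that $F$ is equivariant with respect to a holonomy. Incompleteness means some $g$-geodesic escapes to $\infty$ in finite length. The rigidity then implies $e^{u}$ behaves like $|x|^{-2}$ near $\infty$, meaning $g$ extends smoothly across $\infty$ in the inversion chart $x \mapsto x/|x|^2$. I would verify this by comparing $F$ with a Möbius developing map near the escaping geodesic. In that case $F$ extends to a smooth local isometry $\S^n \to M$. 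Because $\S^n$ is compact, hence complete, this extension is a covering map.
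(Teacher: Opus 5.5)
The opening steps of your proposal match the paper: local injectivity via the small-distortion Liouville theorem in charts, then Lelong-Ferrand regularity. The complete case is also fine, since a local isometry from a complete manifold to a connected one is a covering. The gap is the incomplete case, which is where the second alternative and the surjectivity you postponed both live.

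First, without completeness of $M$ your implication ``$g=F^{*}g_M$ incomplete $\Rightarrow$ $F$ extends to a local isometry $\S^n\to M$'' is false. Take $M=\S^n\setminus\{p\}$ with the round metric and $F$ the inverse stereographic projection. Then $g$ is incomplete and does extend across $\infty$, but the extension sends $\infty$ to $p\notin M$. Take instead $M=(\R^n,e^{-2|x|^2}|dx|^2)$ and $F=\mathrm{id}$. Then $g$ is incomplete and does not extend across $\infty$ at all. In both examples $F$ is a diffeomorphism, hence a covering, so the proposition holds. Your case split simply sorts them into the wrong alternative. Your route therefore needs $M$ complete, which is not in the statement.

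Second, even for complete $M$, the decisive step is only asserted. Completeness gives a limit $y_\infty=\lim F(\gamma(t))$ along the escaping curve. A Liouville/developing-map comparison needs a conformally flat chart on a neighbourhood of $y_\infty$, and $y_\infty$ need not lie in $F(\R^n)$. For round $\S^n$ with inverse stereographic projection, it does not. You have established conformal flatness only on $F(\R^n)$, and you deferred surjectivity to exactly this step, so the argument is circular.

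Given such a chart $\phi$ on a neighbourhood $V$ of $y_\infty$, the argument would work. Liouville applied on the component of $F^{-1}(V)$ containing the tail of $\gamma$ gives $F=\phi^{-1}\circ A$ for a M\"obius map $A$. That component is then a full punctured neighbourhood of $\infty$, and the extension plus compactness of $\S^n$ finishes. The missing input is control of $F$ near $\infty$ in every direction, not just along one curve.

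The paper obtains exactly this from Zorich's global homeomorphism theorem. The lift $\widehat F\colon\R^n\to\widehat M$ to the universal cover is an embedding whose image has zero-dimensional complement. Counting ends shows the complement is at most one point. A point has zero $n$-capacity, so in that case $\widehat F$ extends to a conformal homeomorphism $\S^n\to\widehat M$, which is smooth by Lelong-Ferrand. Composing with the covering projection yields both alternatives and conformal flatness at once, without any completeness dichotomy.
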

\begin{remark}
    The proof shows that when $F$ in \Cref{proposition-trivial-distortion-examples} is not a covering map of $M$, it extends to a smooth conformal covering map $\S^n \simeq \R^n \cup \{\infty\} \to M$.
\end{remark}
    Above, an $n$-manifold is conformally flat if every point has an open neighbourhood which is conformally equivalent to a domain in $\R^n$. When $n  = 3$, the conformal flatness is equivalent to the vanishing of the \emph{Cotton tensor} while for $n \geq 4$, it is characterized by the vanishing of the \emph{Weyl tensor}.
\begin{proof}
    The argument we outline is quite standard, see e.g. \cite{bonk-heinonen-2001-quasiregular-mappings-and-cohomology}. We first recall that $F$ is a local homeomorphism by a localization to charts and a small distortion variant of Liouville's theorem \cite[Chapter VI, Theorem 8.14]{rickman-1993-quasiregular-mappings} and, thus, a local diffeomorphism by Ferrand's resolution of the Lichnerowicz conjecture \cite{lelong-ferrand-1976-geometrical-interpretations-of-scalar-curvature-and-regularity-of-conformal-homeomorphisms}. Then, by Zorich's theorem, the lift $\widehat{F} \colon \R^n \to \widehat{M}$ of $F$ to the universal cover of $M$ is a homeomorphism from $\mathbb{R}^n$ onto its image and the complement of the image in $\widehat{M}$ is zero-dimensional, see \cite[p.~336]{gromov-2007-metric-structures-for-riemannian-and-non-riemannian-spaces} or \cite{holopainen-pankka-2004-mappings-of-finite-distortion-global-homeomorphism-theorem} for this formulation. Since homeomorphisms preserve the number of ends, it follows that the complement of the image is at most a point. If the image omits a point, then $\widehat{F}$ extends to a conformal homeomorphism from $\mathbb{S}^n \simeq \mathbb{R}^n \cup \{\infty\}$ onto $\widehat{M}$ as a point singularity has zero $n$-capacity. By Ferrand's result above, the extension is a conformal diffeomorphism. This implies that $\widehat{M}$ is conformally diffeomorphic to the $n$-sphere $\mathbb{S}^n$. In case the image of $\widehat{F}$ does not omit any points, then $\widehat{F}$ is a conformal diffeomorphism onto $\widehat{M}$.
\end{proof}

    A natural follow-up to \Cref{corollary-reaping-the-harvest} is to ask whether the conclusion holds for \emph{closed} targets. In the following subsections, we compile a list of quasiregularly elliptic manifolds that can be (diffeomorphically) realized as a Special Lagrangian in a closed Calabi--Yau manifold, or in other geometries with special holonomy. However, we are not, nor can we be, exhaustive in our discussion, but we aim to illustrate the richness of this class of examples. We focus solely on positive examples of quasiregular ellipticity, noting that there are extensive lists of non-examples of varying topological types; see e.g. \cite{joyce-2000-compact-manifolds-with-special-holonomy} for further reading. For many of these submanifolds, their Betti numbers obstruct them from being quasiregularly elliptic \cite{prywes-2019-a-bound-on-the-cohomology-of-quasiregularly-elliptic-manifolds} (see also \cite{heikkila-pankka-2025-de-rham-algebras-of-closed-quasiregularly-elliptic-manifolds-are-euclidean,manin-prywes-2025-elliptic-quasiregularly-elliptic-manifolds}).

    \begin{table}[t]
    \begin{center}
    \begin{tabular}{|c|c|c|c|c|c|c|}
    \cline{1-1}
    \\
    Dimension three \\
    \\
    \hline
    \diagbox{calibration}{submanifold}  & $\S^3$ & $\mathbb{T}^3 $ & $\S^1 \times \S^2 $ & $\R P^3$ & $\R P^3 \# \R P^3$ & others \\
    \hline
    associative & $\checkmark$ &  & $\checkmark$ & $\checkmark$ & $\checkmark$ &  \\
    \hline
    Special Lagrangian & $\checkmark$ & $\checkmark$ &  &  &  &  \\
    \hline
    \multicolumn{7}{c}{} \\[0.5em]  
    \cline{1-1}
    \\
    Dimension four \\
    \\
    \hline
    \diagbox{calibration}{submanifold}  & $\S^4$ & $\mathbb{T}^4 $ & $\mathbb{T}^2 \times \S^2 $ & $\S^2 \times \S^2$ & $\C P^2 $ & others \\
    \hline
    coassociative &  & $\checkmark$ & $\checkmark$ &  &  &  \\
    \hline
    Special Lagrangian &  & $\checkmark$ &  &  &  &  \\
    \hline
    Kähler &  & $\checkmark$ &  &  & $\checkmark$ &  \\
    \hline
    Cayley &  & $\checkmark$ &  & $\checkmark$ &  &  \\
    \hline
    quaternionic & $\checkmark$ &  &  &  &  &  \\
    \hline
    \end{tabular}
    \end{center}
    \caption{A table summarizing the discussion of quasiregularly elliptic manifolds as submanifolds of closed calibrated manifolds.}\label{figure-quasiregular-ellipticity}
    \end{table}

    \subsection{Dimension two}
    The only closed quasiregularly elliptic manifolds are the sphere $\S^2$ and the torus $\mathbb{T}^2$. When the former is identified with the complex projective line $\C P^1$, we see that it admits an isometric embedding $\C P^1 \xhookrightarrow{} \C P^{\ell}$ for $\ell \geq 2$ using homogeneous coordinates. The image is calibrated by the Fubini--Study form \(\omega_{\FS}\), i.e. the canonical symplectic form on $\C P^\ell$. This was already observed in \cite{heikkila-2024-quasiregular-curves-and-cohomology}. The torus can either be embedded as a complex submanifold in $\mathbb{T}^{2\ell} \simeq \mathbb{C}^{\ell}/\mathbb{Z}^{2\ell}$, and thus is calibrated by the symplectic form \(\omega_{\mathrm{symp}}\), or as a Special Lagrangian submanifold in $\mathbb{T}^4 \simeq \mathbb{C}^{2}/\mathbb{Z}^{4}$. Therefore \((\CP^\ell, \omega_{\FS})\) and \((\mathbb{T}^{2\ell},\omega_{\mathrm{symp}})\) for \(\ell \geq 2\), and \((\mathbb{T}^4, \omega_{\SL})\) are quasiregularly elliptic.

    \subsection{Dimension three}
    We consider Jormakka's classification. The sphere $\mathbb{S}^3$ occurs as a calibrated submanifold in several settings of interest. Indeed, as discussed in the introduction, Joyce proposed, in \cite{joyce-2002-on-counting-special-lagrangian-homology-3-spheres}, an invariant of Calabi--Yau manifolds $(N, \omega_{\SL})$ based on \emph{counting} Special Lagrangian rational homology $3$-spheres. 
    When the fundamental group of a rational homology $3$-sphere \(M \subset N\) is finite, its universal cover \(\tilde{M}\) is diffeomorphic to the standard $3$-sphere by the Poincaré conjecture, and thus such a rational homology sphere can be interpreted as a quasiregular curve $\S^3 \to \tilde{M} \to M \xhookrightarrow{} ( N, \omega_{\SL} )$. When such an $M$ exists, clearly $( N, \omega_{\SL} )$ is quasiregularly elliptic. See \cite{joyce-2018-conjectures-on-counting-associative-4-folds-in-g2-manifolds} for discussion about the similar role which associative rational homology $3$-spheres play for $G_2$ manifolds. 
    
    Observe that the torus $\mathbb{T}^3$ can also be realized as a Special Lagrangian, e.g. in $\mathbb{T}^6$, similarly to the two-dimensional case. Moreover, the famous SYZ conjecture on Calabi--Yau manifolds (see \cite[Section 9]{joyce-2007-riemannian-holonomy-groups-and-calibrated-geometry}) is related to the existence of (possibly singular) fibrations with generic fibers being Special Lagrangian and diffeomorphic to $\mathbb{T}^3$ on (closed) Calabi--Yau manifolds. This would obviously create a large supply of quasiregularly elliptic Calabi--Yau manifolds. For example, the Kreuzer--Skarke list \cite[page 2]{altman-gray-he-jejjala-nelson-2015-a-calabi-yau-database-threefolds-constructed-from-the-kreuzer-skarke-list} contains at least 30,108 distinct Calabi--Yau 6-manifolds. There are examples of \(\S^1 \times \S^2\) in \(G_2\) manifolds calibrated by the associative form; see e.g. \cite[Lemma 12.6.3]{joyce-2000-compact-manifolds-with-special-holonomy} or \cite[Lemma 12.7.3]{joyce-2000-compact-manifolds-with-special-holonomy}. For further topological types which the ambient $G_2$ manifold can have, see the last column of \cite[Table 12.5]{joyce-2000-compact-manifolds-with-special-holonomy}.
    
    To finish this list of examples, see \cite[Example 7.7]{bera-2026-associative-submanifolds-in-twisted-connected-sum-g2-manifolds} for an associative real projective space $\R P^3$ (double covered by $\S^3$) and see \cite[Examples 7.8--7.9]{bera-2026-associative-submanifolds-in-twisted-connected-sum-g2-manifolds} for an associative connected sum $\R P^3 \# \R P^3$ (double covered by $\S^1 \times \S^2$ \cite[Section 4]{scott-1983-the-geometries-of-3-manifolds}). \Cref{figure-quasiregular-ellipticity} summarizes this list.

    \subsection{Dimension four}
    The topological classification of closed quasiregularly elliptic 4-manifolds was recently completed by Heikkilä--Pankka \cite{heikkila-pankka-2025-de-rham-algebras-of-closed-quasiregularly-elliptic-manifolds-are-euclidean}, Piergallini--Zuddas \cite{piergallini-zuddas-2021-branched-coverings-of-basic-4-manifolds}, and Manin--Prywes \cite{manin-prywes-2025-elliptic-quasiregularly-elliptic-manifolds}. The complete list consists of the following spaces: the complex projective plane $\mathbb{C}P^2$, the torus $\mathbb{T}^4$, the sphere $\mathbb{S}^4$, the products $\mathbb{T}^2 \times \mathbb{S}^2$, $\mathbb{S}^2 \times \mathbb{S}^2$, $\mathbb{S}^1 \times \mathbb{S}^3$, the connected sums $\#^{k} \CP^2 \#^\ell \overline{\CP}^2$ for $0 \leq k, \ell \leq 3$ and $\#^{k}( \S^2 \times \S^2 )$ for $1 \leq k \leq 3$, and the oriented quotients of these spaces. Here $\overline{\CP}^2$ is the standard complex projective space equipped with the opposite orientation. 
    
    We have located the following examples in the literature: 
    Analogously to $\S^2 \simeq \C P^1$, the sphere $\S^4$ is diffeomorphic to the quaternionic projective line $\mathbb{H}P^1$ which admits a canonical embedding into $\mathbb{H}P^{\ell}$ for $\ell \geq 2$. Its image is calibrated by the quaternionic calibration (the Kraines form \cite{kraines-1966-topology-of-quaternionic-manifolds}). As in the case of $\mathbb{C}P^1$, $\mathbb{C}P^2$ occurs as a complex submanifold of $\mathbb{C}P^{\ell}$ for $\ell \geq 3$ which is therefore calibrated by a suitably normalized power of the Fubini--Study form. As in dimension two, the torus $\mathbb{T}^4$ occurs as a Kähler or Special Lagrangian submanifold in $\mathbb{T}^{8}$. It can also be realized in a three-parameter family of coassociative examples of $G_2$ manifolds \cite[Example 12.6.4]{joyce-2000-compact-manifolds-with-special-holonomy}, and as a Cayley submanifold (calibrated by the Cayley form) \cite[Example 14.3.1]{joyce-2000-compact-manifolds-with-special-holonomy}. 
    Similarly to Special Lagrangian torus bundles on Calabi--Yau manifolds, the coassociative torus bundles play a role in $G_2$ manifolds; see \cite{gukov-yau-zaslow-2003-duality-and-fibrations-on-g2-manifolds}. The product $\mathbb{T}^2 \times \S^2$ occurs as a coassociative submanifold in a one-parameter family of examples, cf. \cite[Example 12.6.6]{joyce-2000-compact-manifolds-with-special-holonomy}, and the product $\S^2 \times \S^2$ occurs as a Cayley submanifold \cite[Example 14.3.4]{joyce-2000-compact-manifolds-with-special-holonomy} (for discussion on the deformation theory of Cayley submanifolds, see \cite[Section 12.5.1]{joyce-2007-riemannian-holonomy-groups-and-calibrated-geometry}). 
    
    The authors are not aware of any examples of the product $\mathbb{S}^1 \times \mathbb{S}^3$ or the connected sums $\#^{k} \CP^2 \#^\ell \overline{\CP}^2$ for $0 \leq k, \ell \leq 3$ and $\#^{k}( \S^2 \times \S^2 )$ for $1 \leq k \leq 3$ appearing as e.g. coassociative or Cayley submanifolds. However, for any closed quasiregularly elliptic manifold $M$ and any closed manifold $\Sigma$, clearly $N = M \times \Sigma$ has $M \times \{p\}$ as a calibrated submanifold for $\pi_{M}^{*}\vol_{M}$ and any $p \in \Sigma$, where \(\pi_M\colon N \to M\) is the coordinate projection to \(M\). It is unclear if all (or any) of these can be realized as calibrated submanifolds of a compact space that is not of product type.
    The discussion above is summarized in \Cref{figure-quasiregular-ellipticity}.

\newcommand{\etalchar}[1]{$^{#1}$}
\providecommand{\bysame}{\leavevmode\hbox to3em{\hrulefill}\thinspace}
\providecommand{\MR}{\relax\ifhmode\unskip\space\fi MR }
\providecommand{\MRhref}[2]{%
  \href{http://www.ams.org/mathscinet-getitem?mr=#1}{#2}
}
\providecommand{\href}[2]{#2}

\end{document}